\documentclass[12pt]{amsart}
\usepackage{graphicx} 

\usepackage{soul} 
\usepackage{amscd,amssymb,mathtools}
\usepackage[arrow,matrix,graph,frame,poly,arc,tips]{xy}
\usepackage[dvipsnames]{xcolor}
\usepackage{graphicx}
\usepackage{calrsfs}
\usepackage[labelfont=rm]{subcaption}
\usepackage{tikz-cd} 
\usepackage{tikz}
\usetikzlibrary{decorations.markings}
\usetikzlibrary{shapes}
\usetikzlibrary{backgrounds}
\usetikzlibrary{calc}
\usetikzlibrary{shapes.misc, positioning}
\usepackage{mdwlist}
\usepackage{xfrac}
\DeclareCaptionSubType*{figure}

\usepackage{multirow}
\usepackage{enumerate}
\usepackage{verbatim}
\usepackage{comment}
\usepackage{todonotes}

\DeclarePairedDelimiter{\form}{\langle}{\rangle}

    \newcommand\ba{\begin{align*}}
    \newcommand\ea{\end{align*}}
    \newcommand\be{\begin{enumerate}}
    \newcommand\ee{\end{enumerate}}
    \newcommand\bpf{\begin{proof}}
    \newcommand\epf{\end{proof}}
    \newcommand\bpp{\begin{prop}}
    \newcommand\epp{\end{prop}}
    \newcommand\bpb{\begin{prob}}
    \newcommand\epb{\end{prob}}
    \newcommand\bd{\begin{defn}}
    \newcommand\ed{\end{defn}}
    \newcommand\bh{\begin{hint}}
    \newcommand\eh{\end{hint}}

    \newcommand\N{\mathbb{N}}

    \newcommand\Z{\mathbb{Z}}

    \newcommand{\R}[0]{\mathbb{R}}

    \newcommand\Fix{\operatorname{Fix}}
\DeclareMathOperator\PL{PL}

    \newcommand\rot{\operatorname{rot}}

    \newcommand\supp{\operatorname{supp}}

    \newcommand\gam{\Gamma}

    \DeclareMathOperator\Homeo{Homeo}

    \def\thetitle{Hyperbolicity and obstructions to PL actions of the circle}
    \def\theauthors{{Leonardo Dinamarca, Maximiliano Escayola, Sang-hyun Kim, Thomas Koberda}}
    \usepackage{hyperref}
    \hypersetup{
      colorlinks=false,
      plainpages,
      urlcolor=black,
      linkcolor=black
      pdftitle=   \thetitle,
      pdfauthor=  {\theauthors}
    }

    \theoremstyle{plain}
    
    \newtheorem{thm}{Theorem}[section]
    \newtheorem{lem}[thm]{Lemma}
    \newtheorem{lemma}[thm]{Lemma}
    \newtheorem{cor}[thm]{Corollary}
    \newtheorem{prop}[thm]{Proposition}

    \newtheorem*{claim*}{Claim}

    \theoremstyle{remark}

    \theoremstyle{definition}
    \newtheorem{defn}[thm]{Definition}
    \newtheorem{prob}{Problem}[section]
    
    \keywords{one-dimensional group action, piecewise linear homeomorphism}
\subjclass[2020]{Primary: 57M60; Secondary: 37C35, 37C85}

\begin{document}
    \title\thetitle
    \date{\today}

    
    \author[L. Dinamarca]{Leonardo Dinamarca}
    \address{School of Mathematics, Korea Institute for Advanced Study (KIAS), Seoul, 02455, Korea}
    \email{}
    \urladdr{}
    
    \author[M. Escayola]{Maximiliano Escayola}
    \address{School of Mathematics, Korea Institute for Advanced Study (KIAS), Seoul, 02455, Korea}
    \email{maxiescayola@kias.re.kr}    
    \urladdr{}
    
    \author[S. Kim]{Sang-hyun Kim}
    \address{School of Mathematics, Korea Institute for Advanced Study (KIAS), Seoul, 02455, Korea}
    \email{skim.math@gmail.com}
    \urladdr{https://kimsh.kr}

    \author[T. Koberda]{Thomas Koberda}
    \address{Department of Mathematics, University of Virginia, Charlottesville, VA 22904-4137, USA}
    \email{thomas.koberda@gmail.com}
    \urladdr{https://sites.google.com/view/koberdat}

\begin{abstract}
We investigate acylindrically hyperbolic groups acting on the circle by piecewise linear homeomorphisms. We prove that a finitely generated acylindrically hyperbolic group acting faithfully by piecewise linear homeomorphisms of the circle is virtually free, is virtually a closed hyperbolic surface group, or virtually splits over a two-ended subgroup; as a consequence, we prove a general structural result for Gromov hyperbolic groups of piecewise linear homeomorphisms. Along the way, we show that a right-angled Artin subgroup of piecewise linear homeomorphisms of the circle is either free or abelian, generalizing  a result of Bleak and Salazar-Diaz. We also show that the fundamental group of a finite volume hyperbolic $n$--manifold cannot act faithfully on the circle by piecewise linear homeomorphisms, provided that $n\geq 3$, complementing $2$--dimensional examples of Ghys and Minakawa.
\end{abstract}
    \maketitle
    
\setcounter{tocdepth}{1}
\tableofcontents
    

\section{Introduction}\label{sec:intro}
In this paper we investigate obstructions for groups to act on the circle by piecewise linear homeomorphisms. We are especially interested in obstructions arising from commutativity, cohomology, and hyperbolicity. With this framing, we characterize acylindrically hyperbolic groups that can act on the circle by piecewise linear homeomorphisms, and obtain a general structure theorem for word-hyperbolic subgroups of piecewise linear homeomorphisms.

For us, ``commutativity obstructions" are ones arising from right-angled Artin groups.
It is a well-known result of Bleak and Salazar-Diaz~\cite{BS2013} that the group $\Z^2*\Z$ cannot embed in Thompson's group $V$, and so in particular cannot embed in Thompson's group $T$, the latter of which is identified with a group of piecewise linear homeomorphisms of the circle.

We generalize the Bleak--Salazar-Diaz result in the context of $\PL(S^1)$ by classifying the right-angled Artin groups that admit faithful actions on the circle by piecewise linear homeomorphisms. 

Recall that a \emph{right-angled Artin group (RAAG)} $A(\Gamma)$ is a group presented by a finite simplicial graph $\Gamma$ as
$$
A(\Gamma)=
\form{V(\Gamma)\mid
[v,w]=1\text{ for each edge }\{v,w\}\in E(\Gamma)}.
$$ 
See~\cite{Charney2007,koberda-survey} for general background.

The classification of possible isomorphism types of RAAG subgroups in diffeomorphism groups has played crucial roles for the study on a smooth circle actions of finite-index subgroups of mapping class groups; see~\cite{BKK2014,BKK2019JEMS, KK2018JT,KK2020crit,KK21-book,KKR2021,KKR2024} for instance.

The following result gives the commutativity obstruction for piecewise linear homeomorphisms.
\begin{thm}\label{thm:main}
We have the following.
\begin{enumerate}
    \item The group $F_2\times\mathbb{Z}$ does not embed into $\PL(S^1)$.
    \item The group $\mathbb{Z}^2\ast\mathbb{Z}$ does not embed into $\PL(S^1)$.
\end{enumerate}
\end{thm}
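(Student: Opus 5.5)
The plan rests on two standard facts about piecewise linear maps. \emph{Fact 1:} a PL homeomorphism of $S^1$ with irrational rotation number has no wandering intervals, since the logarithm of its derivative has bounded variation (Denjoy). So it is topologically conjugate to an irrational rotation, and its centralizer in $\Homeo(S^1)$ is conjugate into $SO(2)$; in particular the centralizer is abelian. \emph{Fact 2 (Brin--Squier):} $\PL(I)$ contains no non-abelian free subgroup. Moreover, if $g\in\PL(S^1)$ is nontrivial with a fixed point, then $\Fix(g)$ is a finite union of points and closed intervals, so $\partial\Fix(g)$ is a finite, nonempty set. The whole strategy is to produce a non-abelian free subgroup $H$ of the group in question that has a finite orbit on $S^1$. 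A finite-index subgroup of $H$ then fixes a point, and cutting the circle there embeds that subgroup, which is still non-abelian free, into $\PL(I)$. This contradicts Fact 2.

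For part (1), suppose $F_2\times\Z=\form{x,y}\times\form{z}$ embeds in $\PL(S^1)$. If $\rot(z)$ is irrational, then $F_2$ lies in the centralizer of $z$, which is abelian by Fact 1; this is a contradiction. So $\rot(z)\in\mathbb{Q}$, and some power $z^n$, which is still nontrivial and central, has a fixed point. Since $F_2$ commutes with $z^n$, it preserves $\Fix(z^n)$, and hence permutes the finite set $\partial\Fix(z^n)$. This set is nonempty because $z^n\ne 1$. This gives the required finite orbit, and the argument above applies.

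For part (2), write $\Z^2*\Z=\form{a,b}*\form{c}$. Rotation number restricted to the abelian group $\form{a,b}$ is a homomorphism to $\R/\Z$, and I would split into two cases. In the first case, some nontrivial $g\in\form{a,b}$ has rational rotation number. Replacing $g$ by a power, we may assume $\Fix(g)\neq\emptyset$, and $g$ is nontrivial because the group is torsion-free. The centralizer of $g$ contains $\form{a,b}\cong\Z^2$, and I would look for a non-abelian free subgroup $H\le \Z^2*\Z$ that preserves a finite set built from $\partial\Fix(g)$ and its translates. The natural first candidates are subgroups generated by $g$ together with conjugates of an element $h\in\form{a,b}$ independent from $g$, for instance $\form{h,\, c\,h\,c^{-1}}$ or similar. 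The idea is to exploit that each $h$ permutes the components of $\supp(g)$ and, after passing to a power, stabilizes each of them. On each component $J$, Brin--Squier's description of centralizers of one-bump functions in $\PL(J)$ then forces $g$ and $h$ to be ``locally parallel''. I expect this to yield either a finite orbit for a suitable free subgroup, or two commuting elements with disjointly arranged supports that can be ping-ponged against $c$. In the second case, every nontrivial element of $\form{a,b}$ has irrational rotation number. Then $a$ is conjugate to an irrational rotation by Fact 1, and so is the whole faithful $\Z^2$, since it lies in the centralizer of $a$. Here I would use that $c$ cannot conjugate this minimal rotation-like action into one commuting with $a$. I would compare the break points of $a$ and of $c a c^{-1}$: a PL map topologically conjugate to a rotation and commuting with $a$ must have its breaks invariant under $a$, which is impossible with finitely many breaks unless it is affine. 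This should force enough rigidity to contradict freeness of $\form{a,\,cac^{-1}}$.

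The main obstacle is the first case of part (2). Part (1) is essentially formal once Facts 1 and 2 are in hand, but in $\Z^2*\Z$ no single element centralizes a free subgroup. So the finite orbit, or some other obstruction, has to be extracted from the finer structure of supports of commuting PL maps, in the spirit of Bleak--Salazar-Diaz. My first attempt would be to reduce to the situation where $a$ and $b$ have one-bump restrictions with a common support component $J$. I would then show that $c$ must map $\partial J$ into a finite $\form{a,b}$-invariant set, which produces the desired finite orbit for a free subgroup.
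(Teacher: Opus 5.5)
Your part (1) is correct and is essentially the paper's proof: Herman/Denjoy when $\rot(z)\notin\mathbb{Q}$, and otherwise the finite $F_2$--invariant set $\partial\Fix(z^n)$ plus Brin--Squier. You should first pass to squares of the generators so that you work in $\PL_+(S^1)$, which is harmless. Part (2), however, has genuine gaps in both cases, and you have put the difficulty in the wrong case.

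\textbf{First case: an element with a fixed point.} Since $\partial\Fix(g)$ is finite, nonempty and $\form{a,b}$--invariant, a finite-index subgroup $A_0\cong\Z^2$ fixes a point $p$. The step you are missing is that \emph{every} $q\in S^1$ is then fixed by some nontrivial element of $A_0$. Take a component $I$ of $S^1\setminus\Fix(A_0)$. An element $h\in A_0$ acting nontrivially on $I$ cannot fix a point of $I$: otherwise the finite set $\partial_I\Fix(h)$ is $A_0$--invariant, hence pointwise fixed, which gives a global fixed point in $I$. So $A_0|_I$ lies in the centralizer of a fixed-point-free PL homeomorphism of an interval, which is cyclic (Matucci). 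Hence restriction to $I$ has nontrivial kernel in $A_0$. Apply this with $q=c(p)$ to get $1\neq h\in A_0$ fixing $c(p)$. Then $h$ and $c^{-1}hc$ both fix $p$ and generate a rank-two free subgroup of $\Z^2*\Z$, contradicting Brin--Squier. If $c(p)=p$, use $\form{h',c}$ with $1\neq h'\in A_0$. Your suggested route, showing that $c$ maps $\partial J$ into a finite $\form{a,b}$--invariant set, cannot work because $c$ is arbitrary. The free subgroup must be chosen depending on $c(p)$.

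\textbf{Second case: free $\Z^2$--action.} This is where the real work lies, and your sketch fails here. The claim that a PL map commuting with $a$ has $a$--invariant breakpoints is false: $b=a$ is already a counterexample whenever $a$ is not itself a rotation. The correct constraint is the cocycle identity $J_b-J_b\circ a=J_a-J_a\circ b$. More fundamentally, no rigidity statement about $\form{a,b}$ alone will contradict freeness of $\form{a,cac^{-1}}$; you must exhibit an explicit nontrivial relation involving $c$. The paper does this in four steps.

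First, freeness of the action forces the total jump $C_f$ to vanish identically. So $\tau=C^w_f$ is finitely supported and $J_h=\tau-\tau\circ h$ for all $h\in A$. Hence every $h\in A$ has its breakpoints in $E\cup h^{-1}(E)$, where $E=\supp\tau$ is finite. This is where the second generator of $\Z^2$ is essential.

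Second, $A$ is conjugate to a dense group of rotations. So one can take $f,g\in A\setminus\{1\}$ uniformly close to the identity, with all breakpoints of $f^{\pm1}$ and $cg^{\pm1}c^{-1}$ inside a small neighborhood $V$ of the finite set $F=E\cup c(E)\cup c(BP(c))$.

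Third, the affine group is metabelian. Therefore the double commutator $w=[[f,cgc^{-1}],[f^{-1},cg^{-1}c^{-1}]]$ is the identity outside a slightly larger neighborhood $U$ of $F$.

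Fourth, choose $r\in A$ with $r(U)\cap U=\emptyset$. Then $[w,rwr^{-1}]=1$, whereas the corresponding word is nontrivial in $\Z^2*\Z$ by a Bass--Serre centralizer argument. If $w=1$ already, that is itself a relation.

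The finitely supported jump potential and the metabelian double-commutator trick are the ideas missing from your proposal.
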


A classification for the isomorphism types of RAAG subgroups in $\PL(S^1)$ will then follow; see Section~\ref{sec:classification}.

\begin{cor}\label{cor:raag}
A right-angled Artin group $G$ embeds into $\PL(S^1)$ if and only if $G$ is free or free abelian.
\end{cor}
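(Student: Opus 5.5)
We deduce Corollary~\ref{cor:raag} from Theorem~\ref{thm:main} and a combinatorial analysis of the defining graph $\Gamma$ of $G=A(\Gamma)$.

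For the ``if'' direction we exhibit embeddings. Free abelian groups of every finite rank embed in $\PL(S^1)$: take disjoint arcs $I_1,\dots,I_n$ and nontrivial PL homeomorphisms $g_i$ supported in $I_i$. These commute and generate $\Z^n$, since supports are disjoint and each $g_i$ has infinite order. For free groups it suffices to embed $F_2$, because every finitely generated free group embeds in $F_2$. Thompson's group $T\le \PL(S^1)$ contains nonabelian free subgroups; one can also see this directly by a ping-pong argument with two hyperbolic-like PL maps whose attracting and repelling fixed points are disjoint.

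For the ``only if'' direction, suppose $A(\Gamma)$ embeds and is neither free nor free abelian. Since $A(\Gamma)$ is free exactly when $\Gamma$ has no edges, and free abelian exactly when $\Gamma$ is complete, $\Gamma$ has at least one edge and is not complete. We use the standard fact that for any full subgraph $\Lambda\subseteq\Gamma$, the subgroup generated by $V(\Lambda)$ is isomorphic to $A(\Lambda)$. It therefore suffices to find a full subgraph $\Lambda$ with $A(\Lambda)$ containing $F_2\times\Z$ or $\Z^2*\Z$, which contradicts Theorem~\ref{thm:main}.

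The case analysis is as follows. Pick an edge $\{u,v\}$.
\begin{enumerate}
\item If some vertex $w$ is adjacent to neither $u$ nor $v$, then $\{u,v,w\}$ spans an edge plus an isolated vertex, so $\Z^2*\Z$ embeds.
\item Otherwise every vertex is adjacent to $u$ or $v$. Since $\Gamma$ is not complete, choose non-adjacent vertices $a,b$.
\begin{enumerate}
\item If some vertex $c$ is adjacent to both $a$ and $b$, the full subgraph on $\{a,b,c\}$ is a path of length two, and $A(\Lambda)\cong F_2\times\Z$.
\item If no common neighbour exists, then $\{a,b\}\neq\{u,v\}$ because $u,v$ are adjacent. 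Say $a\notin\{u,v\}$; then $a$ is adjacent to $u$ or $v$, say $u$. If $b=u$, this contradicts non-adjacency. If $b=v$, then $\{a,u,v\}$ spans the path $a-u-v$, which is again case (a). If $b\notin\{u,v\}$, then $b$ is adjacent to $u$ or $v$. If $b\sim u$, then $u$ is a common neighbour of $a$ and $b$, contradiction. So $b\sim v$ and $b\not\sim u$, and the full subgraph on $\{u,v,b\}$ is the path $u-v-b$, giving $F_2\times\Z$.
\end{enumerate}
\end{enumerate}

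In every case we obtain a forbidden subgroup, so $A(\Gamma)$ does not embed. This proves the corollary.

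The main obstacle is this exhaustive graph case analysis; the rest is routine. A cleaner way to organise it: a graph with no induced $P_3$ (path on three vertices) is a disjoint union of cliques. If $\Gamma$ has an edge and is not complete, then either it has an induced $P_3$, or it is a union of at least two cliques, one of which contains an edge; taking that edge together with a vertex from another clique gives an induced edge-plus-isolated-vertex.
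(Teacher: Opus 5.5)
Your proposal is correct and follows essentially the same route as the paper. The paper also handles the ``if'' direction by the same embeddings. For the ``only if'' direction it likewise finds three vertices spanning either an edge plus an isolated vertex ($\Z^2*\Z$) or a path of length two ($F_2\times\Z$), then invokes Theorem~\ref{thm:main}. Your longer case analysis is valid; the paper instead picks an edge $\{v,w\}$ and a vertex $z$ non-adjacent to at least one of $v,w$ (such a pair exists since $\Gamma$ is not complete), which reduces everything to two cases.
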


A closely related circle of ideas concerns manifold fundamental group actions on the circle. By Agol's resolution of the Virtual Fibering Conjecture~\cite{Agol2008,Agol2013}, fundamental groups of hyperbolic $3$--manifolds of finite volume can be realized as subgroups of right-angled Artin groups, up to passing to a finite index subgroups. Theorem~\ref{thm:main} shows that most right-angled Artin groups cannot embed in $\PL(S^1)$, though this does not rule out a faithful action by hyperbolic manifold groups; see~\cite{koberda-girth-2026} for related questions. Closed surface groups do occur as subgroups of $\PL(S^1)$; see the work of Ghys~\cite{Ghys1987GV} and Minakawa~\cite{Minakawa-2001}. Right-angled Artin groups are linearly orderable and hence act faithfully by homeomorphisms (and even by $C^{\infty}$ diffeomorphisms, by~\cite{BKK2014}) of the real line, and so hyperbolic $3$--manifolds also (virtually) act faithfully in this way. However, we have the following, which is the second main result of this paper:

\begin{thm}\label{thm:main-hyp}
    Let $G$ be a torsion-free word-hyperbolic $\R$--Poincar\'e Duality 
    group of cohomological dimension $n\geq 3$. Then $G$ admits no injective homomorphism into $\PL(S^1)$.
\end{thm}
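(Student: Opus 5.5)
We argue by contradiction: suppose $G\le \PL(S^1)$ is a torsion-free word-hyperbolic $\R$--Poincar\'e duality group of dimension $n\ge 3$. The strategy is to find a nontrivial element with a fixed point, pass to the subgroup of $G$ preserving a complementary interval, and show this produces a subgroup of $G$ that is too large to be compatible with $\PL$ dynamics and Poincar\'e duality.

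\textbf{Step 1: producing a global fixed point.} We first use the rotation number. If every element of $G$ acted freely on $S^1$, then by H\"older's theorem $G$ would be abelian. A torsion-free hyperbolic abelian group is cyclic, contradicting $n\ge 3$. Hence some nontrivial $g\in G$ has a fixed point.

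We then use the fact that in $\PL(S^1)$ the fixed set of a nontrivial element is a finite union of points and intervals. The support of $g$ therefore has finitely many components, each an open interval $I$ on which $g$ acts without fixed points.

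\textbf{Step 2: analysing the stabilizer.} Consider $H=\mathrm{Stab}_G(I)$, the subgroup preserving a component $I$ of $\mathrm{supp}(g)$. Then $H$ acts on the closed interval $\bar I$ by $\PL$ homeomorphisms. Elements of $H$ supported in the interior of $I$ interact via Brin--Squier-type arguments: $\PL_+(I)$ contains no nonabelian free subgroup, or at least has strong restrictions on its subgroups. In particular, germs at the endpoints give homomorphisms $H\to\R$ (logarithms of one-sided slopes at $\partial I$). The kernel consists of elements that are the identity near $\partial I$. From these we aim to show that $H$ is solvable-by-abelian in a controlled way. Inside a hyperbolic group this forces $H$ to be virtually cyclic.

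\textbf{Step 3: splitting and cohomological contradiction.} Next we use the $G$-orbit of the finite family of intervals $\mathrm{supp}(g)$, together with the ordering of their endpoints on $S^1$, to build an action of $G$ on a tree or dendrite, in the spirit of a Bass--Serre construction from a $G$-invariant family of nested or crossing intervals. This action should have two-ended edge stabilizers (commensurable to $\langle g\rangle$) and no global fixed point. It would follow that $G$ splits over a two-ended subgroup.

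For an $\R$--$PD(n)$ group with $n\ge3$ this is impossible. A splitting over a subgroup of cohomological dimension $1$ yields, via the Mayer--Vietoris sequence and duality, that the edge group would have to be a $PD(n-1)$ group. Equivalently, codimension-one subgroups of $PD(n)$ groups have cohomological dimension at least $n-1\ge 2$. Alternatively, one can phrase this through the Bowditch JSJ: the Gromov boundary is $S^{n-1}$, which has no local cut points when $n\ge3$, so $G$ admits no splitting over a two-ended subgroup.

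\textbf{Main obstacle.} The hard step is Step 3: actually constructing the splitting from the $\PL$ action. I would first try to use the finitely many breakpoints. The set of breakpoints of the generators gives a $G$-invariant countable set, and the group's action near a fixed point of $g$ is controlled by slopes. This should yield an invariant ``cut pair'' $\{p,q\}$ stabilized by a two-ended subgroup, from which a splitting is extracted via Bowditch's correspondence between cut pairs and two-ended splittings.
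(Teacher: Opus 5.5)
There is a genuine gap, and it is in Step~3, which you flag as the hard step and which carries the whole theorem. No argument is given there. The $G$--translates of the components of $\supp(g)$ can overlap and cross arbitrarily. Nothing makes them a nested family, so no Bass--Serre tree emerges.

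The proposed repair via cut pairs conflates two different circles. Bowditch's correspondence between two-ended splittings and (local) cut points concerns the Gromov boundary $\partial G\cong S^{n-1}$. Your pair $\{p,q\}$ instead lives in the circle on which $G$ acts by $\PL$ homeomorphisms. Every pair of points separates that circle, and nothing in the setup relates it $G$--equivariantly to $\partial G$, so such a pair says nothing about splittings of $G$.

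Step~2 is fine in substance: Brin--Squier plus the Tits alternative in hyperbolic groups makes these stabilizers trivial or cyclic, though Brin--Squier does not give ``solvable-by-abelian''. The end of Step~3 is correct in the form ``codimension-one subgroups of $PD^n$ groups have cohomological dimension at least $n-1$''. The claim that ``the edge group must be $PD^{n-1}$'' is more than you need or can claim.

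The missing idea, which lets you bypass splittings altogether, is the breakpoint (jump) cocycle. Work in $G\cap\PL_+(S^1)$. Let $P$ be the set of breakpoints of a finite generating set and its inverses, and put $c(h)(x)=\log\bigl(D_+h^{-1}(x)/D_-h^{-1}(x)\bigr)$. The chain rule makes $c$ a $1$--cocycle with values in $\bigoplus_{GP}\R\cong\bigoplus_i\R[G/G_i]$, where the $G_i$ are stabilizers of orbit representatives of $GP$.

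This cocycle is not a coboundary. Coboundaries $h\mapsto h\cdot w-w$ are bounded in $\ell^1$ by $2\|w\|_1$. Now take your element $g$ with a fixed point, let $J$ be a component of $\supp(g)$, and let $R>1>L$ be the one-sided slopes of $g$ at the endpoints of $J$ (after possibly replacing $g$ by $g^{-1}$). The jumps of $g^n$ inside $J$ sum to $n(\log L-\log R)$, so $\|c(g^{-n})\|_1\geq n\,|\log L-\log R|\to\infty$. Hence $H^1(G,\R[G/G_i])\neq 0$ for some $i$.

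Since $G_i$ is a point stabilizer, your Step~2 reasoning shows it is trivial or cyclic. Poincar\'e duality plus Shapiro's lemma then give $H^1(G,\R[G/G_i])\cong H_{n-1}(G_i,D)$. This vanishes because $n-1\geq 2>\operatorname{cd}(G_i)$, a contradiction. So the ``codimension-one'' subgroup you were trying to extract from a tree is delivered directly, in cohomological form, by the jumps of the action.

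The paper does also produce two-ended splittings of hyperbolic subgroups of $\PL_+(S^1)$, via a commensurated set, relative ends and the Dunwoody--Swenson torus theorem. That route is again driven by this cocycle, not by the combinatorics of support intervals.
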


    group is not necessary by Theorem~\ref{thm:hyp-structure}.
\begin{cor}\label{cor:hyp}
    Let $M$ be a finite volume hyperbolic $n$--manifold, with $n\geq 3$. Then there is no injective homomorphism from $\pi_1(M)$ into $\PL(S^1)$.
\end{cor}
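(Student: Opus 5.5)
We split according to whether $M$ is closed or cusped; in both cases the aim is to reduce to Theorem~\ref{thm:main-hyp} or Theorem~\ref{thm:main}, using that a subgroup of a subgroup of $\PL(S^1)$ is again a subgroup of $\PL(S^1)$. Suppose, for contradiction, that $\pi_1(M)\to\PL(S^1)$ is injective.

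\emph{Closed case.} Since $M$ is a closed hyperbolic manifold, the universal cover is $\mathbb{H}^n$, so $\pi_1(M)$ is torsion-free and acts properly and cocompactly by isometries on $\mathbb{H}^n$. By the \v{S}varc--Milnor lemma it is word-hyperbolic. Because $M$ is a closed aspherical $n$--manifold, $\pi_1(M)$ is a Poincar\'e duality group of dimension $n$ over $\Z/2$, and also over $\R$ with the orientation character as twisting. If one insists on untwisted duality, pass to the orientation double cover; it is again a finite-volume hyperbolic manifold and its fundamental group is a subgroup of $\pi_1(M)$. Its cohomological dimension is $n\geq 3$, so Theorem~\ref{thm:main-hyp} gives a contradiction.

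\emph{Cusped case.} This is the main obstacle, because $\pi_1(M)$ is then not hyperbolic; it is only relatively hyperbolic, and it is not a Poincar\'e duality group. I see two routes.

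\begin{enumerate}
\item \emph{Via cusp subgroups and Theorem~\ref{thm:main}.} Pass to a finite cover so that the cusp cross-sections are flat $(n-1)$--tori. Then $\pi_1(M)$ contains a maximal parabolic subgroup $P\cong\Z^{n-1}\supseteq\Z^2$. Choose a loxodromic $g$ not lying in $P$. Standard ping-pong in $\mathbb{H}^n$, between the parabolic fixed point of $P$ and the fixed points of a high power of $g$, shows that $\form{P', g^N}\cong P' * \Z$ for a suitable finite-index $\Z^2\le P$ and large $N$. (Combination theorems for geometrically finite Kleinian groups in the Maskit style give this.) This yields $\Z^2*\Z\le \pi_1(M)\le \PL(S^1)$, contradicting Theorem~\ref{thm:main}(2). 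This route needs only the presence of a rank-$\geq 2$ cusp, which holds whenever $n\geq 3$.
\item \emph{Alternative, for $n=3$ only.} Agol's theorem provides a finite-index subgroup embedding in a RAAG. That alone does not suffice, however, because the RAAG itself is not a subgroup of $\pi_1(M)$.
\end{enumerate}

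So route (1) is the one to follow. The step needing the most care is the ping-pong verification. Conjugate so that the cusp point is $\infty$ in the upper half-space model, so that $P'$ acts by Euclidean translations on $\R^{n-1}$. Pick $g$ whose two fixed points lie in $\R^{n-1}$ and are not identified by $P$. Take a compact neighborhood $U$ of the fixed points of $g$ that is disjoint from all its nontrivial $P'$--translates; passing to a deep finite-index $P'$ makes this possible. Take $N$ large so that $g^{\pm N}$ maps the complement of $U$ into $U$, and hence into a small region near its fixed points. Then the usual ping-pong lemma, with the nontrivial elements of $P'$ sending $U$ into the complement of $U$, gives the free product.
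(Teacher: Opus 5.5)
Your proposal is correct and follows the paper's proof. The closed case is exactly Theorem~\ref{thm:main-hyp}, and in the cusped case you produce $\Z^2*\Z$ by ping-pong between a rank-two subgroup of a cusp group and a high power of a loxodromic, contradicting Theorem~\ref{thm:main}(2); the paper cites the equivalent Lemma~\ref{lem:z2-star-z} and calls the ping-pong ``standard,'' and your version of it is correct. One step is unnecessary: passing to a cover with toral cusps. Bieberbach's theorem already gives a finite-index $\Z^{n-1}$ in each maximal parabolic subgroup, and that is all you use.
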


Besides fundamental groups of hyperbolic manifolds, hyperbolic groups (and more generally acylindrically hyperbolic groups) and their actions are of central importance in geometric group theory. Here, by an \emph{acylindrically hyperbolic group}, we mean a group admitting a non-elementary acylindrical action on a Gromov hyperbolic space; see Section~\ref{sec:background} below. Mapping class groups of orientable hyperbolic surfaces~\cite{Bowditch2008} and irreducible right-angled Artin groups~\cite{KK2013b} are examples of acylindrically hyperbolic groups which are generally not (even relatively) hyperbolic. Recall that a group is \emph{virtually special} if a finite index subgroup acts properly discontinuously and cocompactly on a special cube complex. See Section~\ref{sec:background} for extensive background.

For a group $G\leq\PL(S^1)$, we write $G_+$ for the subgroup of index at most two given by intersecting $G$ with $\PL_+(S^1)$, the group of orientation preserving piecewise linear homeomorphisms.

We have the following general structural result about hyperbolic groups acting on the circle by piecewise linear homeomorphisms:
\begin{thm}\label{thm:hyp-structure}
    Let $G\leq\PL(S^1)$ be torsion-free and word-hyperbolic.
    \begin{enumerate}
        \item The group $G$ is virtually compact special.
        \item The group $G_+$ has a classifying space of dimension at most two.
        \item Either $G_+$ is free or virtually retracts to a quasiconvex closed hyperbolic surface subgroup.
        \item If $G_+$ is not free then there is a finite index subgroup $H$ of $G_+$ such that:
        \begin{enumerate}
            \item $H$ surjects to a nonabelian free group.
            \item $H$ has nontrivial rational cohomology in dimension two.
        \end{enumerate}
    \end{enumerate}
\end{thm}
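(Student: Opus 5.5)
The plan is to derive all four items from the structural result for acylindrically hyperbolic groups announced in the abstract, iterated through a hierarchy, together with the RAAG and Poincar\'e duality obstructions above.

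First, pass to $G_+$ and apply the splitting theorem: a finitely generated acylindrically hyperbolic (here torsion-free hyperbolic, non-elementary) subgroup of $\PL(S^1)$ is virtually free, virtually a closed surface group, or virtually splits over a two-ended, hence cyclic, subgroup. Being a subgroup of $\PL(S^1)$ passes to subgroups. Quasiconvex vertex groups of a splitting over cyclic edge groups are again hyperbolic, so I will iterate. By Sela/Louder--Touikan type accessibility for splittings of hyperbolic groups over cyclic subgroups, the iteration terminates after finitely many steps. The result is a quasiconvex hierarchy over cyclic subgroups whose terminal pieces are virtually free or virtually surface groups.

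Second, prove (1) by Wise's quasiconvex hierarchy theorem. Terminal groups are virtually compact special, and a hyperbolic group with a quasiconvex virtually special hierarchy is virtually compact special. The passage from ``virtually splits'' to an honest hierarchy needs care. I would use that quasiconvex subgroups of virtually special hyperbolic groups are separable, and run the argument by induction on hierarchy length.

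Third, prove (2). The graph-of-groups decomposition of $G_+$ (or of a torsion-free finite-index piece) has cyclic edge groups and vertex groups that are, inductively, of dimension at most two. Gluing classifying spaces then gives one of dimension at most two. Torsion-freeness lets me avoid the ``virtually'' issue by invoking Stallings--Swan-type arguments: $\mathrm{cd}$ is invariant under finite index for torsion-free groups, together with Eilenberg--Ganea when $\mathrm{cd}=2$. If necessary, Theorem~\ref{thm:main-hyp} gives a consistency check that no higher-dimensional PD pieces occur.

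Fourth, prove (3) and (4), which I expect to be the main obstacle. If $G_+$ is not free, it has one-ended pieces, and I need a closed surface subgroup. Since $\mathrm{cd}\le 2$ and the group is one-ended hyperbolic with cyclic splittings, I would invoke known results producing quasiconvex surface subgroups in one-ended hyperbolic groups with $\mathrm{cd}=2$, or in virtually special ones containing a surface vertex group. Virtual compact specialness (1) then gives a virtual retraction onto this quasiconvex subgroup (Haglund--Wise). This yields (3).

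For (4), take $H$ the finite-index subgroup retracting onto a surface group $S$ of genus $\ge 2$. Then $H$ surjects onto $S$, which surjects onto $F_2$, giving (a). For (b), $H^2(S;\mathbb{Q})\to H^2(H;\mathbb{Q})$ is injective because retraction gives a left inverse on cohomology. The fundamental class of $S$ is therefore nonzero in $H^2(H;\mathbb{Q})$.

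The hardest point is ensuring that the surface subgroup genuinely exists and is quasiconvex when the pieces are only virtually surfaces. I would handle this by choosing a finite-index subgroup where some vertex group is an honest closed surface group, which is quasiconvex as a vertex group of a hierarchy over quasiconvex edges.
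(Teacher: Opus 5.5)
Your arguments for (1), (2) and (4) are close to the paper's: a cyclic hierarchy plus Wise, gluing classifying spaces, and retraction plus pullback. Item (3), however, has a genuine gap: nothing in your argument produces a closed surface subgroup. You implicitly assume that if $G_+$ is not free, some terminal piece of your cyclic hierarchy is (virtually) a closed surface group, and you treat only the word ``virtually'' as the difficulty. But a one-ended group can have a cyclic hierarchy all of whose terminal pieces are free. Take $D=F*_{\langle w\rangle}\bar F$, the double of $F=F(a,b)$ along $w=a^2ba^{-1}b^{-1}$.
\begin{itemize}
\item It is one-ended: the Whitehead graph of $w$ has no cut vertex, so $w$ lies in no proper free factor.
\item It is hyperbolic: $w$ is not a proper power.
\item It is not a surface group. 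Closed surfaces with $\chi=-2$ have $H_1\cong\Z^4$ or $\Z^3\oplus\Z/2$, whereas $\chi(D)=-2$ and $H_1(D)\cong\Z^3$. Being torsion-free, $D$ is therefore not virtually a surface group either.
\end{itemize}
So the splitting theorem puts $D$ in the splitting case, and your hierarchy ends in two free groups. Passing to finite index cannot create a surface piece, because the induced vertex groups are finite-index subgroups of conjugates of the old ones, hence still free. Nothing in the splitting theorem excludes such groups from $\PL_+(S^1)$, so the surface subgroup has to come from somewhere else. There is also no general theorem giving quasiconvex surface subgroups in one-ended hyperbolic groups of cohomological dimension two; that is Gromov's surface subgroup question, which is open in that generality.

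The missing ingredient is Wilton's theorem. It says that one-ended hyperbolic graphs of free groups with cyclic edge groups, such as $D$, contain quasiconvex closed surface subgroups. More generally (Theorem~\ref{thm:wilton}), a one-ended hyperbolic group without $2$--torsion contains either a quasiconvex closed surface subgroup or an infinite quasiconvex rigid subgroup. The paper's argument runs as follows.
\begin{itemize}
\item If $G_+$ is not free, its Grushko decomposition has a freely indecomposable non-cyclic factor $H$. By Stallings, $H$ is one-ended, and it is quasiconvex.
\item Wilton's theorem gives $H$ either a quasiconvex closed surface subgroup or an infinite quasiconvex rigid subgroup $L$.
\item $L$ is excluded: it is a hyperbolic subgroup of $\PL_+(S^1)$, so by Corollary~\ref{cor:tf-hyp-rigid} it splits over a trivial or cyclic group. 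A cyclic $L$ splits as an HNN extension of the trivial group.
\end{itemize}
From that point your Haglund--Wise step and your proof of (4) are exactly the paper's.

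Two smaller points. First, do not rely on accessibility to stop your own iteration of splittings, since arbitrary iterated cyclic splittings need not terminate. Instead, start from the Louder--Touikan hierarchy and use the splitting theorem only to rule out rigid terminal groups, as the paper does. Since Theorem~\ref{thm:acyl} splits $G_+$ itself, no separability argument is needed. Second, Eilenberg--Ganea does not upgrade $\mathrm{cd}=2$ to a $2$--dimensional classifying space; that is the open Eilenberg--Ganea problem. You do not need it, because gluing along the hierarchy already produces a $2$--complex.
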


The reader will note that Theorem~\ref{thm:main-hyp} follows almost immediately from Theorem~\ref{thm:hyp-structure}. For a reader not interested in general word-hyperbolic groups, we will give a simpler proof of Theorem~\ref{thm:main-hyp} that does not depend on Theorem~\ref{thm:hyp-structure}. We remark that Nicolás Matte Bon and Michele Triestino~\cite{MBT2026} recently obtained that a finitely generated subgroup of the Thompson group $T$ is either virtually free or containing $\mathbb{Z}^2$.

For the more general class of acylindrically hyperbolic groups, we have the following, which is crucial for establishing Theorem~\ref{thm:hyp-structure}:
\begin{thm}\label{thm:acyl}
    Let $G\leq\PL(S^1)$ be a non-virtually cyclic finitely generated and acylindrically hyperbolic. Then one of the following (not mutually exclusive) conclusions holds.
    \begin{enumerate}
        \item $G$ contains a free (possibly cyclic) group of finite index.
        \item $G$ contains a closed hyperbolic surface subgroup of finite index.
        \item $G_+$ splits nontrivially over a virtually cyclic subgroup.
    \end{enumerate}
\end{thm}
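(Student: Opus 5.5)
We work with $G_+$, which has index at most two in $G$. It is still finitely generated and acylindrically hyperbolic: acylindrical hyperbolicity passes to finite index subgroups, and the acylindrical action restricts to a non-elementary one. Since each of conclusions (1) and (2) for $G_+$ implies the same conclusion for $G$, we may assume throughout that $G=G_+$ preserves orientation. The plan is to look at the dynamics of $G_+$ on $S^1$ and split into cases according to the minimal invariant set.

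\textbf{Case A: $G_+$ has a finite orbit.} A finite-index subgroup then fixes a point, so it acts by PL homeomorphisms of an interval, that is, inside $\PL([0,1])$. By Brin--Squier, $\PL([0,1])$ contains no nonabelian free subgroups. An acylindrically hyperbolic group does contain $F_2$, so this case is impossible.

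\textbf{Case B: $G_+$ has an exceptional minimal Cantor set $K$.} Collapse the gaps of $K$ to obtain a minimal action on a circle. Each gap $I$ has a stabilizer $\mathrm{Stab}(I)$, which acts on $\bar I$ by PL maps. It is therefore a subgroup of $\PL([0,1])$, and its image there is solvable or at least has no $F_2$. Since $G$ is finitely generated, there are finitely many orbits of gaps whose stabilizers are nontrivial on the boundary (I would use a PL analogue of Hector/Denjoy finiteness, via the finitely many breakpoints of the generators).

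The plan is to build a Bass--Serre tree from this data. Vertices correspond to gaps and to complementary regions of the lamination on $\bar{\mathbb{D}}^2$ given by the gaps, and edges record adjacency. This yields a splitting of $G_+$ whose edge groups are gap stabilizers. A gap stabilizer fixes two points of $K$, and I expect to argue it is cyclic, or virtually cyclic modulo the kernel acting trivially near $K$. The key input is that an element of an acylindrically hyperbolic group acting with a fixed gap-pair behaves like a hyperbolic element with two fixed points, so its centralizer-type subgroup is virtually cyclic by acylindricity. This gives conclusion (3), provided the splitting is nontrivial. If instead every gap stabilizer is trivial, the semiconjugate minimal action is free-ish, and we land in Case C.

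\textbf{Case C: the action is minimal.} Here I would use Ghys' alternative for minimal actions. If some power of the action is not proximal, pass to a finite cover (lifting to the $k$-fold cover), whose centralizer is a finite cyclic group. Then we may assume the action is proximal; we can ignore the non-proximal cases, since they give an abelian (rotation) image, impossible because $F_2\le G$.

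For a proximal minimal PL action, consider the point stabilizers $G_x$, which act by PL germs at $x$. Taking right and left derivatives gives a homomorphism $G_x\to\R_{>0}^2$, and its kernel acts trivially on a neighbourhood of $x$. The dichotomy is then:
\begin{enumerate}
\item If some nontrivial element has support not equal to all of $S^1$, i.e.\ it fixes an interval, use minimality together with commuting elements with disjoint supports to build $\Z^2\ast\Z$ or $F_2\times\Z$ inside $G$. By Theorem~\ref{thm:main} together with acylindrical hyperbolicity, this should be contradicted. More carefully, the aim is to produce a non-virtually-cyclic subgroup commuting with a loxodromic element, which is impossible because loxodromic centralizers are virtually cyclic.
\item Otherwise every nontrivial element has finitely many fixed points. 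Since breakpoints are finite, a PL element with isolated fixed points is hyperbolic-like. The action is then a convergence action: proximality plus finiteness of fixed points, so Hölder/Solodov-type arguments give convergence. By Tukia--Gabai--Casson--Jungreis, $G$ is virtually Fuchsian. A virtually Fuchsian group is either virtually free (conclusion (1)) or virtually a closed surface group (conclusion (2)); the cusped finite-covolume case is again virtually free.
\end{enumerate}

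\textbf{Main obstacle.} The hard step is Case C(1): turning an element with an interval of fixed points into an obstruction, and showing that the action is a convergence group action when every element has finitely many fixed points. The convergence part requires controlling sequences uniformly. I would first try to use the finite set of breakpoints of generators together with the fact that a sequence in $\PL$ with bounded fixed-point count has a subsequence converging to a map with finitely many discontinuities, then invoke proximality. The Cantor set case also needs care to ensure that the splitting is nontrivial and that its edge groups are virtually cyclic.
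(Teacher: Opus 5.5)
Your Case A is fine, but the proposal has genuine gaps: Case B produces no splitting, and Case C(2), where the real content would have to enter, rests on an implication you have not proved. The route also never touches the tool that actually drives the theorem.

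\textbf{Case B.} The construction does not produce a splitting. The gaps of $K$ are pairwise disjoint open arcs. They contain no points of $K$, and they share no endpoints because $K$ is perfect. So the chord joining the endpoints of a gap $I$ cuts off a half-disk containing no other chord. Every chord therefore bounds the same complementary region, the hull of $K$. The dual tree is a star whose central vertex is fixed by all of $G$, so the splitting is trivial.

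Your fallback of collapsing the gaps and moving to Case C does not rescue this. The collapsed minimal action is only a topological semiconjugate, so it has lost exactly the PL features Case C relies on, namely breakpoints and finiteness of $\partial\Fix$. The same problem arises with your use of Ghys' alternative. The finite cyclic centralizer $C$ is only topologically conjugate to rotations, and one passes to the quotient circle $S^1/C$, not to a cover, so the resulting proximal action need not be PL. Your acylindricity argument for gap stabilizers also conflates fixed points on $S^1$ with loxodromic dynamics on the hyperbolic space; these two actions are unrelated.

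\textbf{Case C(2).} This is the decisive gap. The implication ``minimal, proximal, and every nontrivial element has finitely many fixed points $\Rightarrow$ convergence action'' is not a theorem, and it fails for circle homeomorphism groups. A dense free subgroup of $\mathrm{PSL}_2(\R)$ acts minimally and proximally with at most two fixed points per nontrivial element, yet it is not a convergence group. Convergence is a uniform statement about sequences of group elements, and nothing in your setup provides uniformity. Breakpoint counts are unbounded even along powers (Lemma~\ref{lem:break-accum}), so the compactness you hope for in the limiting argument is unavailable. This step would carry the whole theorem, and it is left unproved.

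\textbf{The missing idea.} The paper avoids the dynamics of minimal sets altogether, as follows.
\begin{enumerate}
\item The integral jump cocycle $b(g)=Z_{g^{-1}}$ is not a coboundary in $H^1(G,W)$ with $W=\bigoplus_{GP}\Z^d$, because breakpoints of powers accumulate linearly (Lemmas~\ref{lem:break-accum} and~\ref{lem:b-not-coboundary}).
\item The associated commensurated half-space in $X\times\Z$ is not transfixed. This yields a point $x$ and $K_x=\ker(\phi_x\colon G_x\to\Z)$ with $e(G,K_x)>1$ (Theorem~\ref{thm:rel-end}).
\item Acylindrical hyperbolicity together with the $\Z^2*\Z$ obstruction forces $G_x$, and hence $K_x$, to be trivial or cyclic (Theorems~\ref{thm:acyl-z2} and~\ref{thm:acyl-centralizer}).
\item The Dunwoody--Swenson theorem (Theorem~\ref{thm:dunwoody-swenson}) then applies. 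Combined with Proposition~\ref{prop:fuchs-extension} and the absence of $\Z^2$, it yields exactly alternatives (1)--(3).
\end{enumerate}

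Your Case C(1) can be salvaged with the same no-$\Z^2$ input. Strong proximality lets you conjugate the support of an element that fixes an interval into that interval, which produces a $\Z^2$ forbidden by Theorem~\ref{thm:acyl-z2}. That alone does not finish the proof.
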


For a surface $S$ of genus $g$ and $n$ marked points, we write $c(S)=3g-3+n$ for its complexity.
\begin{cor}\label{cor:mcg}
    No finite index subgroup of the mapping class group of a surface $S$ with $c(S)\geq 2$ can act faithfully by piecewise linear homeomorphisms of the circle.
\end{cor}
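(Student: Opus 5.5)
Let $\Gamma$ be a finite index subgroup of $\mathrm{Mod}(S)$ with $c(S)\ge 2$, and suppose it acts faithfully on $S^1$ by piecewise linear homeomorphisms. The plan is to reach a contradiction from Theorem~\ref{thm:main}, which only requires exhibiting an embedded copy of $F_2\times\Z$ (or of $\Z^2*\Z$). Since Theorem~\ref{thm:main} concerns subgroups of $\PL(S^1)$, this suffices: any subgroup of $\Gamma$ isomorphic to $F_2\times\Z$ maps injectively into $\PL(S^1)$. Passing to $\Gamma_+$ costs only index two, so we may work in any finite index subgroup.

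The key step is a purely topological construction inside $S$. The assumption $c(S)\ge2$ means that a pants decomposition has at least two curves. I would choose an essential simple closed curve $\gamma$ whose complement $S\setminus\gamma$ has a component $Y$ with $c(Y)\ge1$, so that $Y$ is not a pair of pants.
\begin{itemize}
\item If $\gamma$ is nonseparating, $S\setminus\gamma$ has genus $g-1$ and $n+2$ punctures, and its complexity is $c(S)-1\ge1$.
\item If $g=0$ and $n\ge5$, take $\gamma$ cutting off two punctures. The other side is a sphere with $n-1\ge4$ punctures, whose complexity is at least $1$.
\end{itemize}
Since $Y$ is neither a pants nor an annulus, its mapping class group contains two pseudo-Anosov elements $a,b$ supported in $Y$ that generate a free group $F_2$, by the standard ping-pong argument of Ivanov and McCarthy. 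The Dehn twist $T_\gamma$ commutes with both, and $\langle a,b,T_\gamma\rangle\cong F_2\times\Z$. Injectivity holds because $T_\gamma$ has infinite order and its powers meet $\langle a,b\rangle$ trivially: a nontrivial element of $\langle a,b\rangle$ acts nontrivially on curves in $Y$, whereas $T_\gamma^k$ fixes every curve disjoint from $\gamma$. Replacing $a$, $b$, $T_\gamma$ by suitable powers places this subgroup inside $\Gamma$.

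The only delicate point, and the main obstacle, is the low-complexity boundary case: choosing $\gamma$ correctly and verifying that $Y$ really carries a nonabelian free group of mapping classes commuting with $T_\gamma$. The cases $S_{0,5}$ and $S_{1,2}$ must be checked by hand.
\begin{itemize}
\item For $S_{0,5}$, cutting off two punctures leaves $S_{0,4}$ plus a boundary component, whose mapping class group contains $F_2$.
\item For $S_{1,2}$, cutting along a nonseparating curve gives $S_{0,4}$, which also works.
\end{itemize}
Closed genus two has $c=3$, and a nonseparating curve leaves $S_{1,2}$. With these cases settled, Theorem~\ref{thm:main}(1) gives the contradiction.

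Alternatively, one could use Theorem~\ref{thm:acyl}, since $\mathrm{Mod}(S)$ is acylindrically hyperbolic by Bowditch. Its finite index subgroups are not virtually free and not virtually surface groups, because they contain $\Z^2$ and have virtual cohomological dimension above two. They also do not split over virtually cyclic subgroups, for instance by property FA-type results for mapping class groups. However, the splitting claim is harder to justify, so the direct $F_2\times\Z$ route is the one I would pursue.
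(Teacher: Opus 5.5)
Your argument is correct, but it takes a different route from the paper. The paper never builds $F_2\times\Z$. It uses three inputs:
\begin{itemize}
\item $\mathrm{Mod}(S)$ is acylindrically hyperbolic via Bowditch's action on the curve graph, and so is every finite index subgroup, by Proposition~\ref{prop:acyl-back}.
\item When $c(S)\geq 2$, powers of Dehn twists about two disjoint curves give a copy of $\Z^2$ in any finite index subgroup.
\item Theorem~\ref{thm:acyl-z2} then applies. Its proof uses Abbott--Dahmani to upgrade that $\Z^2$ to a $\Z^2*\Z$, and concludes with Lemma~\ref{lem:z2-star-z}.
\end{itemize}
You instead construct $\langle a,b\rangle\times\langle T_\gamma\rangle\cong F_2\times\Z$ by hand and need only Lemma~\ref{lem:f2-times-z}.

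On the circle side your route is more elementary. Lemma~\ref{lem:f2-times-z} needs only the rotation-number dichotomy, Brin--Squier and Herman. It avoids acylindrical hyperbolicity, Abbott--Dahmani, and the harder free-$\Z^2$ analysis behind Lemma~\ref{lem:z2-star-z}. The price is some surface topology. The cleanest way to certify your injectivity claims is the cutting homomorphism, defined on the stabilizer of $\gamma$ and landing in $\mathrm{Mod}(S\setminus\gamma)$. Its kernel is $\langle T_\gamma\rangle$, and it carries $\langle a,b\rangle$ isomorphically onto the free group you chose in $\mathrm{Mod}(Y)$. This gives both freeness of $\langle a,b\rangle$ in $\mathrm{Mod}(S)$ and $\langle a,b\rangle\cap\langle T_\gamma\rangle=1$. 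The paper's route buys generality: it excludes every acylindrically hyperbolic group containing $\Z^2$, and for mapping class groups it needs nothing beyond two disjoint curves.

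On your discarded alternative: the obstacle you anticipated, namely ruling out splittings over virtually cyclic subgroups, is exactly what the paper sidesteps. It invokes Theorem~\ref{thm:acyl-z2} rather than Theorem~\ref{thm:acyl}, so only the presence of $\Z^2$ matters. Separately, your claim that the virtual cohomological dimension exceeds two is false for $S_{0,5}$ and $S_{1,2}$, where it equals $2$. This does not matter, since a $\Z^2$ subgroup already excludes being virtually free or virtually a hyperbolic surface group.
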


Corollary~\ref{cor:mcg} is of interest since it still appears to be unknown whether a finite index subgroup of the mapping class group of a surface of complexity at least two can act smoothly on $S^1$, and groups of $C^1$ diffeomorphisms and groups of piecewise linear homeomorphisms often behave similarly. See~\cite{BKK2019JEMS,KKR2024,MannWolff20}.

\subsection{AI Disclosure}
Many of the key ideas of the proofs for the main theorems were obtained from iterated guided conversations with OpenAI's ChatGPT. \emph{However, the writing of this manuscript was entirely done by the authors.} In particular, the authors independently verified and executed the given ideas.

\subsection*{Acknowledgments}
S.K.~thanks Xiaobing Sheng for helpful discussions and for pointing him to the reference~\cite{BS2013}.
L.D., M.E.~and S.K.~are supported by Mid-Career Researcher Program (RS-2023-00278510) through the National Research Foundation funded by the government of Korea.
M.E.~and S.K.~are also supported by KIAS Individual Grants (MG107601 and MG073602, respectively).
L.D.~and S.K.~are also supported by  KIAS--KAIST Joint Research Group Grant. T.K.~is partially supported by NSF grant DMS-2349814.
\section{Background}\label{sec:background}

In this section we recall some fundamental notions that are used throughout this paper.

\subsection{RAAG facts}

The following is an elementary consequence of the normal form theorem for right-angled Artin groups; see~\cite{HM1995} for instance.

\begin{lem}\label{lem:raag-power}
Let $\Gamma$ be a finite simplicial graph.
Then for all nonzero integer $N$, the subgroup
generated by 
$$\{v^N \;:\; v\in V(\Gamma)\}$$
in $A(\Gamma)$ is isomorphic to $A(\Gamma)$.
\end{lem}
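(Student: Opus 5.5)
The plan is to show that the homomorphism $\phi\colon A(\Gamma)\to A(\Gamma)$ defined on generators by $\phi(v)=v^N$ is injective; its image is exactly the subgroup in the statement, so this gives the isomorphism. The map is well defined: if $\{v,w\}$ is an edge then $v^N$ and $w^N$ commute, so every defining relation is sent to a relation that holds.

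For injectivity I will use the normal form theorem for right-angled Artin groups (see~\cite{HM1995}). Write a word in syllables $x_1^{e_1}\cdots x_k^{e_k}$ with $x_i\in V(\Gamma)$ and $e_i\neq 0$. Call it \emph{reduced} if it cannot be shortened by two moves: swapping adjacent syllables whose vertices commute (shuffling), and merging adjacent syllables with the same vertex. The theorem says that a reduced word with $k\geq 1$ syllables is nontrivial in $A(\Gamma)$, and that any two reduced words for the same element differ only by shuffling.

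Now take a nontrivial $g\in A(\Gamma)$ and a reduced word $x_1^{e_1}\cdots x_k^{e_k}$ for it, so $k\geq1$. Then $\phi(g)$ is represented by $x_1^{Ne_1}\cdots x_k^{Ne_k}$. This word has the same vertex sequence as the original, and all its exponents $Ne_i$ are nonzero. Reducedness depends only on the vertex sequence and the commutation graph, not on the values of the nonzero exponents. More precisely, the word fails to be reduced exactly when two syllables with the same vertex $x_i=x_j$ can be shuffled to become adjacent, i.e.\ every $x_l$ with $i<l<j$ commutes with $x_i$. That condition is identical for both words. Hence $x_1^{Ne_1}\cdots x_k^{Ne_k}$ is reduced and nonempty, so $\phi(g)\neq1$ and $\phi$ is injective.

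The only step needing care is the exact form of the reducedness criterion: a word is non-reduced iff it contains a pair $x_i=x_j$ with everything strictly between them commuting with $x_i$. This is the standard statement in~\cite{HM1995}, and it is visibly independent of the nonzero exponents, which is all the argument requires.
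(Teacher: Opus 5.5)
Your proposal is correct and follows the same route as the paper, which gives no detailed argument and simply records the lemma as an elementary consequence of the normal form theorem for right-angled Artin groups~\cite{HM1995}. Your key observation is that the reducedness criterion sees only the vertex sequence, since shuffles never change the relative order of syllables whose vertices are equal or non-adjacent; hence $x_1^{Ne_1}\cdots x_k^{Ne_k}$ is again a nonempty reduced word and therefore nontrivial, and this is exactly the content of that remark.
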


\subsection{Piecewise linear and dynamical facts}

A homeomorphism $f$ of the compact interval $I=[0,1]$ or circle $S^1=\R/\Z$ is called \emph{piecewise linear} if, away from a finite set of points, $f$ is given by an affine map of the form $f(x)=\alpha\cdot x+\beta$ for suitable constants $\alpha$ and $\beta$, and with $\alpha\neq 0$. The finite set of points at which the derivative of $f$ is not continuous is called the set of \emph{breakpoints} of $f$. We write $\PL_+$ whenever orientation preserving piecewise linear homeomorphisms are considered; these form a subgroup of index at most two.

We will need the following result of Brin and Squier:

\begin{thm}[See~\cite{BS1985}]\label{thm:brin-squier}
    The group $\PL(I)$ of piecewise linear homeomorphisms of the compact interval contains no nonabelian free subgroup. Moreover, a subgroup of the commutator subgroup $[\PL_+(I),\PL_+(I)]$ is either abelian or contains an infinite rank free abelian subgroup.
\end{thm}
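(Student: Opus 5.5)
Here the statement is Theorem~\ref{thm:brin-squier}, the Brin--Squier theorem. The paper cites it from~\cite{BS1985}, but I would reconstruct the argument as follows. All the work is in $\PL_+(I)$: an orientation-reversing element squares into $\PL_+(I)$, and $\PL_+(I)$ has index two. So a nonabelian free subgroup of $\PL(I)$ would give one in $\PL_+(I)$ (intersect with the index-two subgroup), and the second statement already lives inside $\PL_+(I)$.

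\emph{Step 1 (commuting supports).} For $f,g\in\PL_+(I)$, write $\supp f$ for the finite union of open intervals where $f$ moves points. I will show that the commutator $[f,g]$ has support compactly contained in the open support components. The key observation is local: near each endpoint of a component $J$ of $\supp f\cup\supp g$, each of $f$ and $g$ is linear with a fixed point at that endpoint, so $f$ and $g$ are germs of linear maps $x\mapsto \lambda x$ in local coordinates. These germs commute, so $[f,g]$ is the identity near every endpoint of every such $J$.

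\emph{Step 2 (no free subgroups).} Suppose $f,g$ generate a free group $F_2$. Take $h_1=[f,g]$ and $h_2=[f^{-1},g]$ (or another pair of nontrivial commutators that generate a free subgroup). By Step 1, each $h_i$ has support consisting of finitely many intervals compactly contained in the components $J$. Using the finiteness of breakpoints, I would pass to a suitable pair of words $a,b$ in the commutator subgroup and conjugate one so that its support is pushed off by a long power of $f$ or $g$. Concretely, inside a component $J$, some generator $u$ acts without fixed points on the compact set $\overline{\supp h_1}\cap J$, so $u^N$ displaces that set off itself for large $N$. Then $h_1$ and $u^N h_1 u^{-N}$ have disjoint supports and therefore commute. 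That is a nontrivial relation between two non-commuting elements of a free group, which is a contradiction. This ping-pong-free "displacement implies commuting" argument is the main step. The obstacle is making sure some generator acts freely on the relevant compact piece; I would handle it by induction on the number of components and by restricting to the innermost component where the group acts without global fixed points.

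\emph{Step 3 (the dichotomy for subgroups of the commutator subgroup).} Let $H\le[\PL_+(I),\PL_+(I)]$ be nonabelian. Every element of $H$ is the identity near $0$ and $1$, with compact support in $(0,1)$. Pick non-commuting $a,b\in H$. I would find a component $J$ of the support of $\langle a,b\rangle$ together with an element $c=[a,b]\neq 1$ whose support is compactly contained in $J$, again by Step 1. Inside $J$ the group has no global fixed point, so some element $u$ displaces $\overline{\supp c}$, and we may take $u$ to be a product of $a^{\pm1},b^{\pm1}$. The conjugates $u^{kN}cu^{-kN}$, $k\ge 0$, then have pairwise disjoint supports. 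They are nontrivial and pairwise commuting, and because their supports are disjoint they are independent. Hence they generate $\bigoplus_{k}\Z\cong\Z^\infty$ inside $H$.

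The expected obstacle is the displacement claim in Step 3: finding a single element that moves a compact subset of $J$ entirely off itself when the group has no fixed point in $J$. I would prove it by a compactness argument. Cover $\overline{\supp c}$ by finitely many intervals, each moved by some generator, and compose suitable large powers, using the fact that orbits accumulate only at fixed points and that $J$ contains none.
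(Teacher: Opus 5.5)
\paragraph{Verdict.} The paper does not prove this theorem; it quotes it from Brin--Squier. Your outline is the standard germ-and-displacement argument, and your Step~1 is correct. There is, however, a genuine gap in passing from Step~1 to a single component, and a false claim about generators in Step~2.

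\paragraph{The gap.} Step~1 only shows that $\overline{\supp[a,b]}$ is a compact subset of $\supp\langle a,b\rangle=J_1\sqcup\cdots\sqcup J_m$, not of a single $J_i$. For example, if $a,b$ act by rescaled copies of a non-commuting pair on both $[0,1/3]$ and $[2/3,1]$, then $[a,b]$ is nontrivial in two components.
\begin{enumerate}
\item In Step~3, the sentence producing $c=[a,b]$ compactly supported in one $J$ is therefore unjustified.
\item In Step~2, an element displacing $\overline{\supp h_1}\cap J$ inside one component says nothing about the other components. There $h_1$ and $u^Nh_1u^{-N}$ may still overlap and fail to commute.
\item ``Induction on the number of components'' does not close this by itself, since restricting to a component passes to a quotient of the group, not a subgroup.
\item Independently, the claim that some \emph{generator} acts without fixed points on $\overline{\supp h_1}\cap J$ fails even with one component. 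Take $\supp f=(0,2/3)$ with $f$ moving points left, and $\supp g=(1/3,1)$ with $g$ moving points right. Then $[f,g]$ moves points just left of $1/3$, where $g$ is the identity, and just right of $2/3$, where $f$ is the identity.
\end{enumerate}

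\paragraph{Repairing displacement.} Use the orbit argument instead of a generator. If $\langle a,b\rangle$ has no global fixed point in $J$ and $K\subset J$ is compact, then the supremum of the orbit of $\min K$ is a global fixed point, hence the right end of $J$. So some $u$ satisfies $u(\min K)>\max K$. Require this rather than just $u(K)\cap K=\emptyset$, since it makes the sets $u^k(K)$, $k\geq 0$, pairwise disjoint.

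\paragraph{Repairing the component issue.} Argue by descent.
\begin{enumerate}
\item Let $H=\langle a,b\rangle$ be nonabelian. Let $1\neq c\in[H,H]$, which is compactly supported in $\supp H$ by your germ argument, be nontrivial on $J_1$. Choose $u$ as above for $K_1=\overline{\supp c}\cap J_1$.
\item Since $u$ preserves each $J_i$, every $[c,u^jcu^{-j}]$ with $j\geq 1$ lies in $[H,H]$. It is the identity on $J_1$ and on every component where $c$ is trivial.
\item If one of these commutators is nontrivial, replace $c$ by it. This strictly lowers the number of components on which $c$ is nontrivial.
\item Otherwise the conjugates $u^kcu^{-k}$ pairwise commute. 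They generate $\bigoplus_{k\geq 0}\Z$, because their restrictions to $J_1$ are nontrivial, of infinite order, and have pairwise disjoint supports.
\end{enumerate}
The descent terminates, since with only one component the second alternative holds automatically. Hence every nonabelian subgroup of $\PL_+(I)$ contains $\Z^\infty$. This also rules out $F_2$, whose abelian subgroups are cyclic, so your Step~2 becomes unnecessary.
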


We will need the following structural result about the fixed point set of a piecewise linear homeomorphism:
\begin{lemma}\label{lem:boundary-finite}
    Let $f$ be a piecewise linear homeomorphism of $I$ or $S^1$, and let $\Fix(f)$ denote its fixed point set. Then $\partial\Fix(f)$ is finite.
\end{lemma}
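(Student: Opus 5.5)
The plan is to cut the domain into finitely many closed pieces on each of which $f$ is affine, and to show that each piece contributes only finitely many boundary points of $\Fix(f)$. Let $X$ denote $I$ or $S^1$, and let $B\subset X$ be the finite set of breakpoints of $f$, together with the endpoints $0,1$ when $X=I$. On the circle, lift to $\R=\widetilde{S^1}$ where convenient. The complement $X\setminus B$ is a finite union of open intervals $J_1,\dots,J_k$; if $B$ is empty on $S^1$, take a single arc obtained by cutting at one arbitrary point. On each closure $\bar J_i$ the map $f$ agrees with an affine map $x\mapsto \alpha_i x+\beta_i$. On the circle this is read in a chart, or via a lift $\tilde f$, where $\tilde f(x)-x$ is affine on each lifted piece and fixed points of $f$ correspond to points where $\tilde f(x)-x\in\Z$.

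The key step is to analyse $\Fix(f)\cap\bar J_i$. On $\bar J_i$, the displacement $g_i(x)=\alpha_i x+\beta_i-x$ is affine, so there are two cases. If $g_i$ is not identically zero, it has at most one zero on $\bar J_i$. In the circle case, $\tilde f(x)-x$ is an affine function on a bounded interval, so it meets the finite set of integers in its range at only finitely many points. In either situation, $\Fix(f)\cap\bar J_i$ is finite. If instead $g_i\equiv 0$ (or is constant and equal to an integer, in the circle case), then $\bar J_i\subseteq\Fix(f)$. In both cases $\Fix(f)\cap\bar J_i$ is a finite union of points and closed intervals, with at most two endpoints coming from the intervals.

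Since $X=\bigcup_i\bar J_i$ is a finite union, $\Fix(f)$ is a finite union of points and closed intervals. The boundary of such a set is contained in the set of isolated points together with the interval endpoints, and this set is finite. More concretely, $\partial\Fix(f)\subseteq B\cup\{\text{isolated fixed points in the }J_i\}$, because any fixed point lying in the interior of a piece $J_i$ with $g_i\equiv0$ is an interior point of $\Fix(f)$. That gives the conclusion.

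The only mild obstacle is bookkeeping on $S^1$: an orientation-reversing $f$, or the use of a lift, requires care with the integer-valued displacement. Working with a lift $\tilde f$ on a fundamental domain handles this, since there are only finitely many integers in the bounded range of $\tilde f(x)-x$ on each piece.
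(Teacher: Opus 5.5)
Your proof is correct and follows the paper's argument. Both subdivide at the breakpoints, observe that the displacement $f(x)-x$ is affine on each piece so that its zero set there is either finite or the whole piece, and conclude that $\Fix(f)$ is a finite union of points and closed intervals. Your use of a lift with integer-valued displacement on $S^1$ is slightly more careful than the paper's informal treatment of $f(x)-x$ on the circle: for an orientation-reversing map, such as a reflection, one piece can contain two fixed points, though finiteness is unaffected.
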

\begin{proof}
    Subdivide $I$ or $S^1=\R/\Z$ at the breakpoints of $f$. The restriction of $g(x)=f(x)-x$ to any interval in the complement is affine. The zero set of $g(x)$ is empty, consists of a point, or is the whole interval. Since $f$ has only finitely many breakpoints, it follows that $\Fix(f)$ consists of finitely many closed intervals and isolated points.
\end{proof}

The following fact is not difficult but useful.
\begin{prop}\label{prop:cyclic-trivial}
    Let $G\leq\PL_+(I)$ contain no copy of $\Z^2$. Then $G$ is either trivial or cyclic.
\end{prop}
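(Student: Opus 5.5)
Given $G\leq\PL_+(I)$ with no copy of $\Z^2$ (and $G$ nontrivial), I would show directly that any two elements of $G$ generate a cyclic group, and then upgrade this to $G$ itself being cyclic.

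\textbf{Step 1: $G$ is abelian.} By Theorem~\ref{thm:brin-squier}, $G$ has no nonabelian free subgroup. The useful consequence is the second part: the commutator subgroup $[G,G]$ lies in $[\PL_+(I),\PL_+(I)]$, so it is either abelian or contains an infinite rank free abelian group. The latter contains $\Z^2$, so $[G,G]$ is abelian. Since $G$ has no $\Z^2$, $[G,G]$ is abelian with no $\Z^2$. Also, $\PL_+(I)$ is torsion-free, because a finite-order orientation preserving homeomorphism of $I$ is trivial. So $[G,G]$ is a torsion-free abelian group of rank at most one, i.e.\ isomorphic to a subgroup of $\mathbb{Q}$.

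To conclude that $G$ is abelian, I would use the standard germ argument. For $g\in G$, let $\lambda_0(g)=g'(0^+)$ and $\lambda_1(g)=g'(1^-)$. These are homomorphisms to $\R_{>0}$, and they vanish on $[G,G]$. Hence every nontrivial $h\in[G,G]$ is the identity near $0$ and near $1$. Take a component $J$ of the support of such an $h$, with left endpoint $a$. Since $h$ is compactly supported in $(0,1)$, so are all its conjugates. One can then find $g\in G$ whose support crosses $a$, unless $G$ fixes $a$. In the crossing case, the elements $h$ and $ghg^{-1}$ should generate a group containing $\Z^2$ or a free semigroup, giving a contradiction.

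This is the main obstacle, and I expect to need a cleaner route. Here is the one I would actually use. Work component by component. On each orbit-closure interval, restricting to a component $J$ of $\supp G$, the germs at an endpoint give a homomorphism $G\to\R$ with kernel consisting of elements supported away from that endpoint. I would induct on the number of breakpoints appearing in a finite generating set. Failing that, for general $G$, I would use Step 2 pairwise.

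\textbf{Step 2: two-generated subgroups.} For $a,b\in G$, the group $\langle a,b\rangle$ is abelian (by Step 1), torsion-free, and has no $\Z^2$. A finitely generated torsion-free abelian group of rank at most one is cyclic, so $\langle a,b\rangle$ is cyclic.

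\textbf{Step 3: $G$ is cyclic.} By Step 2, $G$ is torsion-free abelian of rank one, so it embeds in $\mathbb{Q}$. It remains to exclude non-cyclic subgroups of $\mathbb{Q}$, which are exactly the groups containing elements with arbitrarily high roots. So suppose $g\neq1$ has roots $g_n$ with $g_n^{k_n}=g$ and $k_n\to\infty$. Each $g_n$ commutes with $g$, so it preserves each component of $\supp g$. On a component $J$ with left endpoint $p$, look at the initial slopes: $g'(p^+)=(g_n'(p^+))^{k_n}$. This is a nontrivial obstruction only if the slopes are nontrivial; if $g'(p^+)=1$, I instead use breakpoints.

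The key rigidity input is that the centralizer of $g$ restricted to $J$ is cyclic or embeds in $\R$. Concretely, $g_n$ restricted to $J$ is determined by the slope at $p$ in the nontrivial case. Each $g_n$ is PL, and its first breakpoint $q_n>p$ in $J$ satisfies $g_n^{k_n}$ has a breakpoint near $\ldots$. Counting breakpoints of powers against the fixed finite breakpoint set of $g$ should bound $k_n$. Making this breakpoint-count bound rigorous is the main technical point. From it, $G$ is trivial or isomorphic to $\Z$.
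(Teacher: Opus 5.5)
Your outline has two genuine gaps, and they sit exactly where the proposition has content. \textbf{First, Step~1 never shows that $G$ is abelian.} You correctly get that $[G,G]$ is torsion-free abelian of rank at most one. But the crossing-case contradiction is only asserted (``should generate''), and the breakpoint induction is unspecified. Falling back on Step~2 is circular, since Step~2 assumes $\langle a,b\rangle$ is abelian.

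The paper avoids this entirely. Conjugating $G$ into $\PL_+$ of a compact subinterval such as $[1/3,2/3]$ (extended by the identity) gives an isomorphic copy of $G$ inside $[\PL_+(I),\PL_+(I)]$. The second half of Theorem~\ref{thm:brin-squier}, applied to $G$ itself rather than to $[G,G]$, then makes $G$ abelian.

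Your own route can also be closed in a few lines. Any two nontrivial elements of a torsion-free rank-one abelian group have a common nontrivial power, and $\supp(h^k)=\supp(h)$ for $k\neq 0$. So for $1\neq h\in[G,G]$ and $g\in G$ you get $g(\supp h)=\supp(ghg^{-1})=\supp h$. Hence $G$ permutes, and being order preserving fixes, the finitely many components of $\supp h$, and in particular fixes the left endpoint $a$ of one of them. Then $g\mapsto g'(a^+)$ is a homomorphism on $G$ that kills $h$. So $h$ is the identity near $a$, contradicting that $h$ moves points just to the right of $a$.

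\textbf{Second, Step~3 must exclude non-finitely generated rank-one groups such as $\Z[1/2]$, and this is left unproved.} ``Cyclic or embeds in $\R$'' is not enough, since $\Z[1/2]$ embeds in $\R$; you need discreteness. A breakpoint count in the spirit of Lemma~\ref{lem:break-accum} applied to $g=g_n^{k_n}$ only bounds the total jump of $g_n$ from below, not $k_n$.

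The paper supplies the missing rigidity via Matucci's theorem: the centralizer of a fixed-point-free PL homeomorphism of an interval is cyclic. Once $G$ is abelian, all its nontrivial elements share the support $J_1\sqcup\dots\sqcup J_n$ of a fixed $g$, and each $J_i$ is $G$-invariant. Each restriction $G|_{J_i}$ is then cyclic, so $G\hookrightarrow\Z^n$, and the absence of $\Z^2$ gives cyclicity with no root extraction at all.

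If you want this rigidity self-contained, the mechanism is a Mather-invariant one rather than a count. Let $h$ commute with $g$ on $J=(p,q)$, with slopes $\sigma$ at $p$ and $\tau$ at $q$. Commutation propagates these affine germs of $h$ over the affine regions of $g$ near $p$ and $q$. Hence $g^m(p+\sigma(x-p))=q-\tau\bigl(q-g^m(x)\bigr)$ for $x$ in a fundamental domain near $p$. Differentiating on an affine piece of $g^m$ forces $\sigma=\tau$ as soon as $\sigma$ is close to $1$, with the threshold depending only on $g$. This is impossible, since $\sigma>1>\tau$ when $h(x)>x$ on $J$. So $h\mapsto\log h'(p^+)$ is injective with discrete image.
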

\begin{proof}
    It is standard that the commutator subgroup of $\PL_+(I)$ contains copies of $\PL_+(I)$, and so we may assume that $G$ lies in the commutator subgroup of $\PL_+(I)$ already. By Theorem~\ref{thm:brin-squier}, we may assume that $G$ is abelian. Thus, if $g\in G$ is nontrivial and $h\in G$ is arbitrary, then $h$ and $g$ must generate a cyclic subgroup of $G$; here we are using the fact that $\PL_+(I)$ and hence $G$ is torsion-free.

    It follows that $g$ and $h$ must have the same support in $I$, which we write as a union $\bigcup_{i=1}^n J_i$ of disjoint open intervals. Since $h$ commutes with $g$ and is order preserving, we must have that $h$ preserves each $J_i$. The restriction of $g$ to each $J_i$ is fixed point free. Matucci~\cite{Matucci-2010} proves that the centralizer of a fixed point free piecewise linear homeomorphism of the interval is cyclic. It follows that the image of $G$ when restricting to $J_i$ is cyclic. It follows that $G$ embeds in a finitely generated free abelian group $\Z^n$. Since $G$ has no copy of $\Z^2$, we have that $G$ must be cyclic, as desired.
\end{proof}

The \emph{rotation number} $\rot(f)$ of a homeomorphism $f\in\Homeo_+(S^1)$ is defined by taking an arbitrary lift $F$ to $\Homeo_+(\R)$, computing the limit $\lim_{n\to\infty}\frac{F^n(x)}{n}$ for $x\in\R$ arbitrary, and computing the result modulo one. For background on the rotation number, see~\cite{Navas2011,KK21-book,Ghys2001,KKM2019}, for instance. Some standard facts about the rotation number are that the rotation number is conjugacy--invariant, that the rotation number of $f$ is rational if and only if $f$ has a periodic orbit, is zero if and only if $f$ has a fixed point, and is homogeneous in the sense that $\rot(f^n)=n\rot(f)$ for all $n\in \N$.

Denjoy's theory of homeomorphisms investigates the rigidity of homeomorphisms $f\in\Homeo_+(S^1)$ with $\rot(f)\notin\mathbb Q$; see~\cite{Navas2011,KK21-book,athanassopoulos,Kim:aa}. Here, \emph{rigidity} of such a homeomorphism means that it is conjugate (in $\Homeo_+(S^1)$ generally) to an irrational rotation. The following fact was established (in a more general form) by Herman:

\begin{thm}[{See~\cite{Herman1979}, also ~\cite[p. 38]{dMvS1993}}]\label{thm:herman}
    Let $f\in\PL_+(S^1)$ have irrational rotation number. Then $f$ is conjugate to an irrational rotation of $S^1$ in $\Homeo_+(S^1)$.
\end{thm}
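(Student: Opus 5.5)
The statement is Herman's theorem: a PL circle homeomorphism with irrational rotation number is topologically conjugate to a rotation. I would prove it with Denjoy's classical argument. By Poincaré's classification, a homeomorphism $f\in\Homeo_+(S^1)$ with $\rot(f)=\alpha\notin\mathbb Q$ is semiconjugate to the rotation $R_\alpha$ by a monotone degree-one map $h$ with $h\circ f=R_\alpha\circ h$. Moreover, $f$ is conjugate to $R_\alpha$ exactly when $f$ is minimal, that is, when $h$ collapses no interval. So the task reduces to excluding a \emph{wandering interval}: a nondegenerate open interval $J$ whose iterates $f^n(J)$, $n\in\Z$, are pairwise disjoint. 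These arise as the interiors of the intervals that $h$ collapses.

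The key tool is Denjoy's bounded distortion estimate for functions whose derivative has bounded variation. For $f\in\PL_+(S^1)$, set $\phi=\log Df$. This is a step function with finitely many jumps, so its total variation $V=\operatorname{Var}(\log Df)$ is finite. Let $p_n/q_n$ be the continued fraction convergents of $\alpha$. The combinatorics of the irrational rotation, which $f$ shares through the semiconjugacy, give the following. For any $x$, the intervals $f^i(I_n)$, $0\le i<q_n$, have pairwise disjoint interiors, where $I_n$ is the arc between $x$ and $f^{q_n}(x)$ (or between $f^{-q_n}(x)$ and $x$).

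From this disjointness, and because $\log Df^{q_n}=\sum_{i<q_n}\phi\circ f^i$, one gets the Denjoy--Koksma type inequality
$$\bigl|\log Df^{q_n}(x)-\log Df^{q_n}(y)\bigr|\le V$$
for $x,y$ in a fundamental interval $I_n$. One then derives
$$e^{-V}\le Df^{q_n}(x)\,Df^{-q_n}(x)\le e^{V}.$$
Here derivatives are one-sided at breakpoints; they are defined everywhere up to this ambiguity, and only the chain rule for PL maps is needed. Integrating over $J$ then gives
$$|f^{q_n}(J)|+|f^{-q_n}(J)|\ \ge\ 2\sqrt{\textstyle\int_J Df^{q_n}Df^{-q_n}}\ \ge\ 2e^{-V/2}|J|.$$

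This contradicts wandering. The intervals $f^{m}(J)$ are pairwise disjoint, so $\sum_m|f^m(J)|\le 1$, and hence $|f^{\pm q_n}(J)|\to 0$ as $n\to\infty$. The displayed lower bound is uniform in $n$, which is impossible. Hence no wandering interval exists, $h$ is injective, and $h$ is a conjugacy to $R_\alpha$.

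I expect the main obstacle to be the distortion bound, specifically the combinatorial fact about the return times $q_n$. One must justify that the intervals $f^{i}(I_n)$, $0\le i<q_n$, are disjoint. I would prove this first for the rotation $R_\alpha$, using the closest-return property of the $q_n$, and then transfer it to $f$ through the order-preserving semiconjugacy $h$. Care is needed in the transfer when $I_n$ meets collapsed intervals, since $h$ is not injective there. Handling breakpoints only requires taking one-sided derivatives, which does not affect the variation bound.
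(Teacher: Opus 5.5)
Your argument is correct. It is the classical Denjoy argument: bounded variation of $\log Df$, combined with disjointness of the first $q_n$ iterates of $[f^{-q_n}(x),x]$, which gives $e^{-V}\le Df^{q_n}Df^{-q_n}\le e^{V}$ and so rules out wandering intervals. This is the argument behind the references the paper cites; the paper itself gives no proof and simply quotes Herman and de Melo--van Strien.

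One slip needs fixing. In your integrated estimate, the middle term $2\sqrt{\int_J Df^{q_n}Df^{-q_n}}$ should be $2\int_J\sqrt{Df^{q_n}Df^{-q_n}}$. As written, the first inequality already fails for a rotation with $|J|<1$, since it would say $2|J|\ge 2\sqrt{|J|}$. The outer bound $|f^{q_n}(J)|+|f^{-q_n}(J)|\ge 2e^{-V/2}|J|$ is still true. It follows by integrating over $J$ the pointwise AM--GM inequality $Df^{q_n}+Df^{-q_n}\ge 2\sqrt{Df^{q_n}Df^{-q_n}}\ge 2e^{-V/2}$.
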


\subsection{Poincar\'e duality groups}
We need some very basic results about Poincar\'e duality groups; see~\cite{davis-pd} for background. The material in this section is only needed for the direct proof of Theorem~\ref{thm:main-hyp}.

 Let $M$ be an aspherical closed $n$--manifold, and let $G=\pi_1(M)$. Then $M$ satisfies Poincar\'e duality, i.e. \[H^i(G,A)\cong H_{n-i}(G,D\otimes A);\] here, $A$ is an arbitrary $G$--module and $D$ is the \emph{dualizing module}. In general, let $n\in\N$ and $R$ be a commutative ring. A \emph{$PD^n_R$ group} $G$ is a group which admits a dualizing module $R[G]$--module $D$, which is isomorphic to $R$ as an $R$--module, such that $H^i(G,A)\cong H_{n-i}(G,D\otimes A)$ for all $R[G]$--modules $A$. This isomorphism is natural and is induced by a cap product with a class $\mu\in H_n(G,D)$.

 \begin{lem}\label{lem:shapiro}
     Let $G$ be a $PD_{\R}^n$ group with dualizing module $D$. Let $H\leq G$, and let $D_H$ be the restriction of $D$ to $H$, i.e.~viewing $D$ as an $H$--module. Then there is a natural isomorphism \[H^1(G,\R[G/H])\cong H_{n-1}(H,D_H).\]
\end{lem}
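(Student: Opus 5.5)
We will prove Lemma~\ref{lem:shapiro} by combining Poincar\'e duality for $G$ with Shapiro's lemma. The isomorphism will be realized as the composite
\[H^1(G,\R[G/H])\cong H_{n-1}(G,D\otimes_\R \R[G/H])\cong H_{n-1}(H,D_H).\]

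First, we apply the defining duality isomorphism of the $PD^n_\R$ group $G$ with $i=1$ and $A=\R[G/H]$, the permutation module. This gives
\[H^1(G,\R[G/H])\cong H_{n-1}(G,D\otimes_\R\R[G/H]),\]
induced by cap product with $\mu$, where $G$ acts diagonally on the tensor product.

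Second, we identify the coefficient module. We claim that $D\otimes_\R\R[G/H]$ is isomorphic, as a $G$--module, to the induced module $\R[G]\otimes_{\R[H]} D_H$. The standard untwisting map $g\otimes_H d\mapsto gd\otimes gH$ should give the isomorphism. To verify this, we check three things:
\begin{enumerate}
\item The map is well defined: replacing $g$ by $gh$ and $d$ by $h^{-1}d$ gives the same image.
\item It is $G$--equivariant for the diagonal action on the target.
\item It is bijective, with inverse $d\otimes gH\mapsto g\otimes_H g^{-1}d$.
\end{enumerate}
We note that $\R[G/H]$ is the direct sum $\bigoplus_{gH}\R$, not a product. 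That is exactly why it matches the induced module (tensor induction) rather than the coinduced one, which is what homology requires.

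Third, we apply Shapiro's lemma in homology:
\[H_*(G,\R[G]\otimes_{\R[H]}D_H)\cong H_*(H,D_H).\]
This follows by taking a projective $\R[G]$--resolution $P_\bullet\to\R$, which restricts to a projective $\R[H]$--resolution, and observing that
\[P_\bullet\otimes_{G}(\R[G]\otimes_H D_H)\cong P_\bullet\otimes_H D_H.\]
Setting $*=n-1$ and composing with the first step gives the stated isomorphism. Naturality holds because each step is natural: the cap product with $\mu$, the untwisting isomorphism, and the Shapiro isomorphism.

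The main care point is the second step, specifically getting the left/right conventions for $D\otimes A$ and the induced module correct. One must confirm that the module appearing in the duality statement is the diagonal-action one, so that the untwisting isomorphism applies. We expect no other obstacle, since the rest is formal.
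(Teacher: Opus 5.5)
Your proposal is correct and follows the same route as the paper. You apply Poincar\'e duality with $A=\R[G/H]$, identify $D\otimes\R[G/H]$ with the induced module $\mathrm{Ind}^G_H D_H$, and conclude by Shapiro's lemma in homology; your explicit untwisting map $g\otimes_H d\mapsto gd\otimes gH$ supplies the verification the paper only asserts, and your composite also corrects the paper's typos in the intermediate step ($H_1$ for $H^1$, and $H_{n-1}(H,\cdot)$ for $H_{n-1}(G,\cdot)$).
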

\begin{proof}
    By Poincar\'e duality, we have \[H_1(G,\R[G/H])\cong H_{n-1}(H,D\otimes\R[G/H]).\] The tensor product $D\otimes \R[G/H]$ is the induced module $\mathrm{Ind}^G_H D_H$. By Shapiro's Lemma~\cite{weibel-book}, it follows that \[H_{n-1}(H,D_H\cong H_{n-1})(G,\mathrm{Ind}^G_H D_H).\] This establishes the lemma.
\end{proof}

\subsection{Coarse geometry}\label{ss:coarse}

Let $G$ be a finitely generated group and let $H\leq G$ be a subgroup. The \emph{relative ends} $e(G,H)$ of $G$ relative to $H$ is the number of ends of the Schreier graph of $G/H$.

If $G$ acts on a set $Y$ then a subset $A\subseteq Y$ is \emph{commensurated} if the symmetric difference of $A$ and $g\cdot A$ is finite for each $g\in G$. We say $A$ is \emph{transfixed} if $A$ differs from a $G$--invariant set by a finite set.

Recall that a geodesic path metric space $X$ is called \emph{Gromov hyperbolic} or just \emph{hyperbolic} if there is a $\delta>0$ for which $X$ satisfies the $\delta$--thin triangle condition. That is, for any geodesic triangle with vertices $\{a,b,c\}$ in $X$, any edge is connected in the $\delta$--neighborhood of the union of the other two edges. A group is \emph{word-hyperbolic} if the Cayley graph is Gromov hyperbolic, and is \emph{nonelementary} if it is infinite and does not contain a cyclic group of finite index.

\subsubsection{Acylindricity}

Let $G$ be a group acting by isometries on a metric space $(X,d)$. The action of $G$ is \emph{acylindrical} if for all $r>0$ there exist $R,N>0$ such that whenever $x,y\in X$ with $d(x,y)\geq R$, then \[|\{g\in G\mid d(g\cdot x,x)\leq r,\, d(g\cdot y,y)\leq r\}|\leq N.\] A group is called \emph{acylindrically hyperbolic} if there is a Gromov hyperbolic metric space $X$ on which $G$ admits a
\emph{nonelementary} 
(i.e.~with two loxodromics with disjoint endpoints)
acylindrical action. The following proposition summarizes standard facts about acylindrical hyperbolicity; see~\cite{DGO2011,osin-2016,osin-minasyan}.

\begin{prop}\label{prop:acyl-back}
    Let $G$ be acylindrically hyperbolic.
    \begin{enumerate}
        \item The group $G$ admits a unique finite maximal normal subgroup $K(G)$, such that $G/K(G)$ is acylindrically hyperbolic and has no nontrivial finite normal subgroups.
        \item Acylindrical hyperbolicity is inherited by finite index subgroups.
        \end{enumerate}
Moreover, for a given acylindrical action on a hyperbolic metric space:
        \begin{enumerate}
        \setcounter{enumi}{2}
        \item Every nontrivial element $g\in G$ is either \emph{elliptic}, 
        i.e.~every orbit under the action of $g$ is bounded, or \emph{loxodromic}, i.e.~$g$ has a positive translation length.
        \item The centralizer of a loxodromic element is virtually cyclic.
    \end{enumerate}
\end{prop}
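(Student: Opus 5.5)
Since Proposition~\ref{prop:acyl-back} only collects standard facts, the plan is to reduce each item to the literature, mainly Bowditch, Dahmani--Guirardel--Osin~\cite{DGO2011} and Osin~\cite{osin-2016}, and to give only short arguments where an item follows directly from definitions. Fix a nonelementary acylindrical action of $G$ on a hyperbolic space $X$.

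\textbf{Items (3) and (4).} I would treat these first, because they feed into (1).
\begin{itemize}
\item For (3), I would cite Bowditch's lemma that acylindrical actions on hyperbolic spaces have a uniform positive lower bound on the stable translation length of loxodromic elements. Osin's refinement~\cite{osin-2016} then excludes parabolic elements, so every element is elliptic or loxodromic.
\item For (4), let $g$ be loxodromic. Acylindricity immediately gives that $g$ satisfies the WPD condition. By~\cite[Lemma 6.5]{DGO2011}, the setwise stabilizer $E(g)$ of the fixed pair $\{g^{\pm\infty}\}\subset\partial X$ is virtually cyclic.
\item Any $h$ commuting with $g$ satisfies $h\cdot g^{\pm\infty}=(hgh^{-1})^{\pm\infty}=g^{\pm\infty}$. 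Hence the centralizer $C_G(g)$ lies in $E(g)$, and is therefore virtually cyclic.
\end{itemize}

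\textbf{Item (2).} Let $H\le G$ have finite index.
\begin{itemize}
\item The restricted action of $H$ on $X$ is still acylindrical, since the defining sets for $H$ are subsets of those for $G$.
\item If $g_1,g_2\in G$ are loxodromics with disjoint fixed pairs, pick $m$ with $g_i^m\in H$. Then $g_1^m,g_2^m$ are loxodromics in $H$ with the same disjoint fixed pairs.
\item So the $H$--action is nonelementary, and $H$ is acylindrically hyperbolic.
\end{itemize}

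\textbf{Item (1).} This is the main obstacle.
\begin{itemize}
\item \emph{A bound on finite normal subgroups.} Let $N\trianglelefteq G$ be finite. For a loxodromic $g$, the group $N$ permutes the finite set $\{(ngn^{-1})^{\pm\infty}\}$, which equals $\{g^{\pm\infty}\}$ because $ngn^{-1}$ differs from $g$ by an element of $N$. Indeed $(ngn^{-1})^k=g^k\cdot(\text{element of }N)$ up to conjugation, so the quasi-axes stay at bounded distance. Hence $N\le E(g)$.
\item Taking two independent loxodromics $g,h$, every finite normal subgroup lies in $E(g)\cap E(h)$. This intersection is finite, since it stabilizes four distinct boundary points and acylindricity then bounds it.
\item \emph{Existence and uniqueness of $K(G)$.} The product of two finite normal subgroups is again finite normal. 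Together with the uniform bound above, this gives a unique maximal finite normal subgroup $K(G)$, namely the product of all of them.
\item \emph{No finite normal subgroups in $G/K(G)$.} If $G/K(G)$ had a nontrivial finite normal subgroup, its preimage in $G$ would be a finite normal subgroup strictly containing $K(G)$, contradicting maximality.
\item \emph{Acylindrical hyperbolicity of $G/K(G)$.} This is the delicate point, since $K(G)$ need not act trivially on $X$. I would not try to build a quotient space directly. Instead I would use Osin's characterization~\cite{osin-2016}: $G$ is acylindrically hyperbolic if and only if it contains a proper infinite hyperbolically embedded subgroup, for instance $E(g)\hookrightarrow_h G$ from~\cite{DGO2011}.
\item Hyperbolic embeddedness passes to quotients by finite normal subgroups contained in the embedded subgroup; this is a relative Cayley graph argument, with $K(G)\le E(g)$. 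Hence $E(g)/K(G)\hookrightarrow_h G/K(G)$, and this subgroup remains infinite and proper, so $G/K(G)$ is acylindrically hyperbolic.
\item If this step proves awkward, I would cite~\cite[Theorem 6.14]{DGO2011} and~\cite{osin-minasyan} directly for the statement about $K(G)$.
\end{itemize}
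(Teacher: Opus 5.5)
Your proposal is correct and takes the same approach as the paper. The paper gives no argument of its own and records these items as standard facts from \cite{DGO2011,osin-2016,osin-minasyan}, which are the sources you reduce to. Your supplementary sketches are sound and only make explicit what those references prove: powers of independent loxodromics landing in the finite-index subgroup for (2), and, for (1), every finite normal subgroup lying in the finite group $E(g)\cap E(h)$ together with $E(g)/K(G)\hookrightarrow_h G/K(G)$ via Osin's characterization.
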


Suppose $G$ acts acylindrically on a Gromov hyperbolic metric space $X$. A subgroup of $G$ is called \emph{elliptic} if every element acts elliptically on $X$. With these assumptions, we have the following result of Abbott--Dahmani~\cite{abbott-dahmani}:

\begin{prop}\label{prop:abbott-dahmani}
    Suppose $G$ has no finite normal subgroups and acts coboundedly, nonelementarily, and acylindrically on a hyperbolic metric space $X$. If $\{H_1,\ldots,H_n\}$ are elliptic subgroups of $G$ then there is a nontrivial element $g\in G$ such that $\langle H_i,g\rangle\cong H_i*\langle g\rangle$ for all $1\leq i\leq n$.
\end{prop}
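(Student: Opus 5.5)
The plan is a ping-pong argument on $X$ driven by a single, carefully chosen loxodromic element. It has four steps.

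\textbf{Step 1: elliptic subgroups have bounded orbits.} For acylindrical actions, Osin's trichotomy says that every subgroup either has bounded orbits, or is virtually cyclic and contains a loxodromic, or contains a loxodromic element. A subgroup all of whose elements are elliptic therefore has bounded orbits. So each $H_i$ has a quasi-centre $C_i\subseteq X$: a set of uniformly bounded diameter $D$ that $H_i$ moves within a bounded distance. Fix $r$ much larger than $D$ and $\delta$, and let $R,N$ be the acylindricity constants for $r$.

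\textbf{Step 2: choose the loxodromic.} By coboundedness and nonelementarity, loxodromics exist. Since $K(G)=1$, the results of Dahmani--Guirardel--Osin give a loxodromic $g_0$ whose maximal elementary subgroup is $E(g_0)=\langle g_0\rangle$. In particular, the elements that move a long segment of the quasi-axis of $g_0$ by at most $r$ form a finite set lying in $\langle g_0\rangle$. Replace $g_0$ by a conjugate $g=kg_0k^{-1}$, using coboundedness to move its quasi-axis $\ell$. We want $\ell$ to be far from every $C_i$, or at least to have projection $\pi_i$ of $C_i$ to $\ell$ of bounded diameter. We also want no nontrivial $h\in H_i$ to move a long subsegment of $\ell$ near $\pi_i$ by at most $r$.

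\textbf{Step 3: why the second condition can be arranged.} By acylindricity, the elements $r$-fixing two points at distance $\ge R$ form a set of at most $N$ elements. Any nontrivial element of $H_i$ that $r$-fixed a long stretch of $\ell$ near $\pi_i$ would lie in $E(g)=\langle g\rangle$. But $g$ is loxodromic, while the elements of $H_i$ are elliptic, so the only such element is trivial. Hence every nontrivial $h\in H_i$ moves $\ell$ off itself near $\pi_i$ after a bounded fellow-travelling length $L$, uniformly in $h$.

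\textbf{Step 4: ping-pong.} Take $n$ large compared to $L$, $D$ and $\delta$. Let $U^{\pm}$ be the sets of points whose projection to $\ell$ lies beyond distance $\tfrac{1}{3}n\,\tau(g)$ from $\pi_i$ in the $\pm$ direction. Let $V_i$ be the set of points projecting within that distance. Then $g^{\pm n}$ maps $V_i\cup U^{\pm}$ into $U^{\pm}$. By Step 3 and thin triangles, each nontrivial $h\in H_i$ maps $U^+\cup U^-$ into $V_i$: the geodesics from $C_i$ to $h\ell$ and to $\ell$ diverge within distance $L$, so the image of a far point projects near $\pi_i$. The ping-pong lemma then gives $\langle H_i,g^n\rangle\cong H_i*\langle g^n\rangle$. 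The same $n$ works for all $i$ simultaneously, because there are finitely many $H_i$; so we take the element in the statement to be $g^n$.

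\textbf{Expected main obstacle.} The hardest part is Step 3: making the divergence of $h\ell$ from $\ell$ uniform over the possibly infinite groups $H_i$, while choosing one conjugate $g$ that works for all $i$ at once. The key ingredients there are the structure of $E(g)$ and the absence of finite normal subgroups. My first attempt would be to reduce to a statement about a single long segment and bound the stabilizers by acylindricity, as above.
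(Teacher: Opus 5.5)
The paper does not prove this proposition; it quotes Abbott--Dahmani. Your ping-pong would therefore be a self-contained replacement, and its architecture is the right one: bounded orbits for elliptic subgroups, a loxodromic $g$ with $E(g)=\langle g\rangle$ coming from $K(G)=1$, then ping-pong with $g^n$. But Step 4 does not follow from Step 3.

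\textbf{The gap: coarse flips.} Step 3 only excludes nontrivial $h\in H_i$ that move a long subsegment of $\ell$ by at most $r$. The inclusion $h(U^+\cup U^-)\subseteq V_i$ also fails if $h$ coarsely fixes $\pi_i$ and \emph{flips} $\ell$ there, i.e.\ sends the stretch of $\ell$ on the negative side of $\pi_i$ to within $r$ of the stretch on the positive side.
\begin{itemize}
\item Such an $h$ moves a point at distance $s$ from $\pi_i$ by roughly $2s$, so Step 3 says nothing about it.
\item Yet it maps $U^-$ into $U^+$, which breaks the ping-pong.
\end{itemize}
Unwinding Step 4 shows that exactly two cases arise. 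If $hx$ projects far from $\pi_i$, then $[c,hx]$ and $h[c,x]$ fellow-travel, since $d(c,hc)$ is bounded. Hence $h$ carries a segment of length about $\tfrac13 n\tau(g)$ on one side of $\pi_i$ close to a segment on one side of $\pi_i$. Either it is the same side (coarse fixing, which Step 3 handles) or the opposite side (a flip, which Step 3 does not). Flips genuinely occur: an elliptic $h$ with $hgh^{-1}=g^{-1}$ does exactly this along the whole axis. So it is $E(g)=\langle g\rangle$, not Step 3, that must rule them out.

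\textbf{The repair.} It uses the same mechanism you need anyway to justify the ``in particular'' of Step 2. That claim is not a formal consequence of $E(g_0)=\langle g_0\rangle$; it is a pigeonhole argument using acylindricity.
\begin{itemize}
\item \emph{Coarse fixing.} Suppose $h$ coarsely fixes $g^jx_0$ for $|j|\le K$. Then the $M+1$ conjugates $g^{-k}hg^k$, $0\le k\le M$, all coarsely fix both $g^{-K}x_0$ and $g^{K-M}x_0$. Take $M\ge N$ and $K$ large relative to $R$. Then two of these conjugates coincide, so $h$ commutes with some $g^m$, $m\neq 0$. Hence $h\in E(g)=\langle g\rangle$, and $h=1$ because $h$ is elliptic.
\item \emph{Coarse flip.} Suppose instead $d(hg^jx_0,g^{-j}x_0)\le r$ for $|j|\le K$. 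Run the same argument with the elements $g^khg^k$, comparing each with $h$. A coincidence gives $h^{-1}g^mh=g^{-m}$ for some $m\neq 0$. So $h\in E(g)$ inverts a power of $g$, which no element of the abelian group $\langle g\rangle$ does.
\end{itemize}

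\textbf{Consequences.} With both cases handled, the threshold $L$ depends only on $g$, $\delta$ and the acylindricity constants. It is therefore automatically uniform over all $h\in G$ and all $i$. The conjugation in Step 2, and with it the coboundedness hypothesis, is unnecessary, and the ``main obstacle'' you anticipate disappears.

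A minor point: the third alternative in Osin's trichotomy is ``contains infinitely many independent loxodromics.'' Your conclusion in Step 1 is unaffected.
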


\subsubsection{Quasiconvexity and cubulations}

To analyze word-hyperbolic subgroups of $\PL(S^1)$, we will need some structural results about quasiconvex subgroups, hierarchies, and cubulations. A reader not interested specifically in the proof of Theorem~\ref{thm:hyp-structure} may skip this section.

Let $G$ be a word-hyperbolic group and let $H\leq G$ be a subgroup. We fix a finite generating set for $G$. We say that $H$ is \emph{quasiconvex} if there is a $K\geq 0$ such that every geodesic in the Cayley graph for $G$ between elements of $H$ remains within a $K$--neighborhood of $H$. Quasiconvexity does not depend on the generating set of $G$ and is transitive with respect to subgroup containment.

A \emph{hierarchy} for a group $G=G_r$ begins with a finite rooted tree $T$ with $G_r$ assigned to the root $r$. If $T$ consists of more than the root $r$, then $r$ is also assigned a nontrivial finite graph of groups decomposition for $G_r$. If a node $v$ of $T$ has been assigned a group $G_v$ and is not a terminal leaf, then $G_v$ is assigned a nontrivial graph of groups decomposition and the nodes immediately below $v$ are assigned the vertex groups of the graph of groups decomposition. We say that a hierarchy of a word-hyperbolic group is a \emph{quasiconvex hierarchy} if every edge group is quasiconvex in each graph of groups decomposition.

A group is \emph{virtually compact special} if it admits a finite index subgroup which is the fundamental group of a compact special nonpositively curved cube complex; a group is \emph{virtually cubulated} if it admits a finite index subgroup that acts properly and cocompactly on a CAT(0) cube complex. We refer the reader to~\cite{Wise2012,Wise2011} and the references therein for background on nonpositively curved cube complexes and special cube complexes, and we avoid giving more detail here.

Quasiconvexity in the ambient word-hyperbolic group is guaranteed by the following result of Bowditch (see Proposition 1.2 of~\cite{Bowditch1998}, see also~\cite{vavrichek}):

\begin{thm}\label{thm:vavrichek}
    Let $G$ be word-hyperbolic, and suppose that $G$ admits a finite graph of groups decomposition whose edge groups are finite or virtually cyclic. Then every vertex group of $G$ is quasi-convex in $H$.
\end{thm}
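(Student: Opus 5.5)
The plan is to prove quasiconvexity of a vertex group $G_v$ by studying how a geodesic with both endpoints in $G_v$ is seen by the Bass--Serre tree $T$ of the splitting. The idea is that every excursion of the geodesic away from $G_v$ must leave and come back through a coset of an edge group. Edge groups are quasiconvex, so each such excursion stays close to that coset, and the coset in turn lies close to $G_v$.

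First, edge groups are quasiconvex. A finite subgroup is trivially quasiconvex. A virtually cyclic subgroup of a word-hyperbolic group is quasiconvex, because an infinite-order element generates a quasi-isometrically embedded cyclic subgroup and finite-index supergroups are at finite Hausdorff distance. There are finitely many edge groups, so we fix a single constant $K$ that works for all of them and all their conjugates by the finitely many fixed transversal elements used to write the graph of groups.

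Second, we set up the coarse projection to $T$. Fix a finite generating set $S$ of $G$ and a base vertex $v\in T$ with stabilizer $G_v$. Map $g\in G$ to $g\cdot v\in T$. If $g,gs$ are adjacent in the Cayley graph, then $d_T(gv,gsv)\le C:=\max_{s\in S} d_T(v,sv)$. Let $\gamma=(g_0=1,g_1,\dots,g_m=h)$ be a geodesic with $h\in G_v$, and join consecutive image points by tree geodesics to get a path in $T$ from $v$ back to $v$.

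Decompose $\gamma$ into maximal subsegments $g_i,\dots,g_j$ with $g_iv=g_jv=v$ and $g_kv\neq v$ strictly in between. On such a segment the tree path leaves $v$ through some edge $e$ and, since $T$ is a tree, returns through the same edge $e$. Write $e=k\,t\,e_0$ with $k\in G_v$ and $t$ one of finitely many fixed elements. Then $g_i$ and $g_j$ lie within distance $C'$ (depending only on $C$ and the transversal) of the coset $k t G_{e_0}$. This is the point where the bounded jumps must be controlled. Concretely, one checks that $g_i s$ maps into the branch of $T$ through $e$, so $g_i$ is within bounded distance of an element carrying $e_0$ to a translate of $e$, and similarly for $g_j$. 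Since $\gamma$ is geodesic and $kt G_{e_0}$ is $K$--quasiconvex, standard thin-triangle arguments give that the subsegment $g_i,\dots,g_j$ lies in the $(K+C'+2\delta)$--neighborhood of $ktG_{e_0}$. Because $t$ ranges over a finite set, $ktG_{e_0}t^{-1}\subseteq G_v$, and so this coset lies within bounded distance of $kG_v=G_v$. Points $g_k$ with $g_kv=v$ already lie in $G_v$. Hence all of $\gamma$ lies in a uniform neighborhood of $G_v$, which is the required quasiconvexity.

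I expect the main obstacle to be the bookkeeping in the second step. The issue is making rigorous that the exit and re-entry points of $\gamma$ are uniformly close to one coset of an edge group attached at $v$, given that consecutive Cayley-graph vertices can jump distance up to $C$ in $T$. I would handle this by first subdividing so that each generator moves $v$ by at most one edge, for example by passing to a generating set adapted to the graph of groups (vertex-group elements together with stable letters). Failing that, I would argue directly with the finitely many edges of the tree geodesic $[v,sv]$ for $s\in S$.
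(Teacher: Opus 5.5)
\textbf{Relation to the paper.} The paper does not prove this statement. It quotes Bowditch's Proposition~1.2 (and Vavrichek), which assumes only that the edge groups are quasiconvex. (The statement's ``quasi-convex in $H$'' should read ``in $G$'', as you assumed.) Your argument is the standard direct proof at that same generality, and most of its ingredients are right:
\begin{itemize}
\item finite and virtually cyclic subgroups of a hyperbolic group are quasiconvex;
\item the left coset $ktG_{e_0}$ is quasiconvex with the same constant as $G_{e_0}$;
\item $ktG_{e_0}=(ktG_{e_0}t^{-1})\,t\subseteq G_vt$, because $te_0$ is incident to $v$, so this coset lies within $|t|$ of $G_v$;
\item a thin quadrilateral controls a geodesic whose endpoints lie near a quasiconvex set.
\end{itemize}

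\textbf{The step that fails as written.} You claim that a maximal run with $g_kv\neq v$ leaves and re-enters $v$ through the same edge. For a general finite generating set $S$, a segment $[g_kv,g_{k+1}v]$ can pass through $v$ even though neither endpoint is $v$. For example, take $A*_CB$ with $a\in A\setminus C$, $b,b'\in B\setminus C$, $s=bab'\in S$, and $w$ the vertex fixed by $B$. The tree geodesic from $b^{-1}v$ to $ab'v$ is $b^{-1}v,w,v,aw,ab'v$. It arrives at $v$ along $\{w,v\}$ and departs along $\{v,aw\}\neq\{w,v\}$. So your exit and re-entry points need not lie near a common edge coset.

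\textbf{Your first remedy.} It cannot be carried out literally: in $A*_CB$ the orbit $Gv$ lies at even distance from $v$ and $G\neq G_v$, so some generator must move $v$ by at least two edges. An adapted generating set does help, but for a different reason. Take finite generating sets of the vertex groups along a lifted maximal tree through $v$ (these exist because $G$ and the edge groups are finitely generated), together with the stable letters. Then each $[v,sv]$ meets $Gv$ only at its endpoints, and that is exactly what makes your decomposition valid.

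\textbf{Your second remedy.} This works for arbitrary $S$ once you change the decomposition.
\begin{itemize}
\item Call $k$ a visit when $v\in[g_kv,g_{k+1}v]$. Then $g_k^{-1}v$ is one of finitely many vertices on the geodesics $[v,sv]$, so $g_k$ is uniformly close to $G_v$.
\item Let $i<j$ be consecutive visits. Consider the tree path from the last passage of $[g_iv,g_{i+1}v]$ through $v$ to the first passage of $[g_jv,g_{j+1}v]$ through $v$. It lies in one component of $T\setminus\{v\}$, so it leaves and returns along a single edge $e$.
\item Both $g_i^{-1}e$ and $g_j^{-1}e$ lie in the finite set of edges of the geodesics $[v,sv]$. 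Also, $\{g: gf=e\}$ is a left coset of $\mathrm{Stab}(f)$.
\item Hence $g_i$ and $g_j$ are uniformly close to one left coset of a conjugate of an edge group, conjugated by one of finitely many elements.
\item For every element $g$ of that coset, $g^{-1}v$ is an endpoint of one of finitely many edges. So the whole coset is uniformly close to $G_v$.
\end{itemize}
With this change the rest of your argument goes through unchanged.
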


The following is due to Louder--Touikan:
\begin{thm}[Corollary 2.7 of~\cite{louder-touikan}]\label{thm:louder-touikan}
    Let $G$ be a torsion-free word-hyperbolic group. Then $G$ admits a finite hierarchy in which all edge groups are trivial or infinite cyclic, and whose terminal groups are either trivial, infinite cyclic, or admit no splitting over a trivial or infinite cyclic group.
\end{thm}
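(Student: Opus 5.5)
The plan is to build the hierarchy by alternately taking free splittings and cyclic splittings, and to obtain termination from a strong accessibility theorem. Set $G_r=G$. Since $G$ is finitely generated, Grushko's theorem gives $G=G_1*\cdots*G_k*F$ with $F$ free and each $G_i$ freely indecomposable and not cyclic. This is a finite graph of groups with trivial edge groups, so by Theorem~\ref{thm:vavrichek} each $G_i$ is quasiconvex, hence itself torsion-free word-hyperbolic and finitely presented. The free factor $F$ is split further along its free basis until only trivial or infinite cyclic terminal pieces remain.

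Next, take each freely indecomposable $G_i$ that is not cyclic. If $G_i$ admits no nontrivial splitting over an infinite cyclic subgroup, it is a legitimate terminal group. Otherwise, choose a nontrivial splitting over $\Z$. A canonical choice is Bowditch's cyclic JSJ decomposition, or more simply any single one-edge splitting.

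Again by Theorem~\ref{thm:vavrichek}, the vertex groups of this splitting are quasiconvex. They are therefore torsion-free hyperbolic, and we recurse on them, first applying Grushko and then looking for cyclic splittings. This produces a rooted tree of splittings. All edge groups are trivial or infinite cyclic, since torsion-freeness rules out nontrivial finite edge groups. By construction, every leaf is trivial, infinite cyclic, or admits no splitting over a trivial or infinite cyclic group.

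The main obstacle is proving that this process terminates, that is, that the tree is finite. Ordinary Dunwoody or Bestvina--Feighn accessibility bounds a single splitting, but it does not bound the depth of iterated splittings. The vertex groups of a cyclic splitting can themselves split over cyclic subgroups that are not conjugate into the incident edge groups.

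My first attempt would be to invoke the hierarchical accessibility theorem of Delzant--Potyagailo. It applies to finitely presented groups without $2$--torsion that are split iteratively over elementary (here trivial or cyclic) subgroups, and it guarantees that such hierarchies terminate in finitely many steps. The hypotheses are satisfied because every group arising in the process is torsion-free hyperbolic, and in particular finitely presented.

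As a fallback, I would seek a complexity that strictly decreases. Quasiconvexity of vertex groups keeps the hyperbolicity constants and the generating-set sizes under control. One then argues, as Louder--Touikan do, that an infinite descending chain of cyclic splittings would yield arbitrarily complicated resolutions of a fixed finite presentation, contradicting a Bestvina--Feighn-type complexity bound.
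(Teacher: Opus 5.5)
The paper gives no proof of this statement; it imports it from Louder--Touikan. The whole content is therefore the termination of the hierarchy, and that is exactly where your proposal has a gap. Your preliminary steps are fine: Grushko, quasiconvexity and hence hyperbolicity of vertex groups via Theorem~\ref{thm:vavrichek}, and the absence of nontrivial finite edge groups.

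The problem is that your recursion allows \emph{any} one-edge cyclic splitting at each stage, and no accessibility theorem can guarantee termination for arbitrary choices. In $F_2=\langle a,b\rangle$ one has $F_2=\langle a,b^2\rangle*_{\langle b^2\rangle}\langle b\rangle$ with $\langle a,b^2\rangle\cong F_2$. Iterating this gives an infinite hierarchy over cyclic subgroups of a finitely presented torsion-free hyperbolic group. So Delzant--Potyagailo cannot assert that ``such hierarchies terminate.'' Results of this type provide the existence of \emph{some} finite hierarchy, built from particular choices of decomposition at every level, and your process does not make those choices.

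Your Grushko-first rule removes this particular example. However, you give no argument that Grushko followed by arbitrary cyclic splittings of the one-ended factors terminates. The fallback sketch (an infinite chain would give arbitrarily complicated resolutions of a fixed presentation) essentially restates the problem. Bestvina--Feighn-type bounds control a single reduced graph of groups, not iterated splittings of vertex groups, and that is precisely the difficulty Louder--Touikan's paper is devoted to.

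To close the gap you have two options. You can fix a canonical choice at every level (Grushko, then the cyclic JSJ of each one-ended factor) and quote a theorem proving that this specific hierarchy is finite. Or you can quote the existence of a finite hierarchy outright, in which case your construction is superfluous and the argument reduces to the citation the paper makes.

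In either case, check that the terminal groups of the quoted hierarchy admit no trivial or cyclic splitting \emph{absolutely}, not merely relative to their incident edge groups. JSJ-type hierarchies are naturally relative. The statement, and its use in Proposition~\ref{prop:PL-hyp-hierarchy} via Corollary~\ref{cor:tf-hyp-rigid}, needs the absolute version.
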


It follows from Theorem~\ref{thm:vavrichek} that every group occurring in Theorem~\ref{thm:louder-touikan} is quasi-convex in the ambient group.
The following is a fundamental result due to Wise~\cite{Wise2011}:

\begin{thm}\label{thm:wise}
    Suppose $G$ is a word-hyperbolic group admitting a finite quasiconvex hierarchy whose terminal groups are finite. Then $G$ is virtually compact special.
\end{thm}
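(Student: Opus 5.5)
This statement is Wise's theorem, and we would simply quote it from~\cite{Wise2011}. For completeness, here is the strategy we would follow to prove it. The argument is an induction on the length of the hierarchy. Since the hierarchy is finite, it suffices to prove one inductive step. Suppose $G$ is word-hyperbolic and splits as a finite graph of groups with quasiconvex edge groups, and each vertex group $G_v$ is hyperbolic (it is quasiconvex in $G$) and virtually compact special. We must show that $G$ is virtually compact special. The base case is a finite terminal group, which is trivially virtually compact special. Quasiconvexity passes down the tree because quasiconvexity is transitive, so every group in the hierarchy is hyperbolic and its edge groups are quasiconvex in it.

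The inductive step splits into two parts. First we cubulate $G$. Each edge group $G_e$ is quasiconvex and, via the Bass--Serre tree, gives a codimension-one subgroup. In each vertex group, the cube complex of $G_v$ supplies quasiconvex codimension-one subgroups (hyperplane stabilizers). We would extend these to $G$ by a combination argument: build finitely many quasiconvex codimension-one subgroups of $G$ whose walls separate every pair of points of $\partial G$. Sageev's construction, together with the Bergeron--Wise boundary criterion for hyperbolic groups, then produces a proper cocompact action of $G$ on a CAT(0) cube complex.

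Second, we upgrade cubulated to virtually special. Following Wise, the tool is the Malnormal Special Quotient Theorem. After passing to finite index, we may assume the edge groups form an almost malnormal quasiconvex family in the vertex groups, and that the vertex groups are compact special. The quotient theorem lets us fill the edge groups by deep finite-index subgroups so that the quotients of the vertex groups remain hyperbolic and virtually special, and the induced quotient of $G$ is virtually special. Separability of quasiconvex subgroups in virtually special hyperbolic groups (Haglund--Wise) then lets us pass to a finite cover of the graph of cube complexes. In that cover the edge spaces embed and are wall-injective, and the Haglund--Wise combination theorem yields a compact special cube complex for a finite-index subgroup of $G$.

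The main obstacle is the special-quotient step. Proving that hyperbolic Dehn filling of virtually special groups along a malnormal quasiconvex family stays virtually special is the technical heart of~\cite{Wise2011}. It requires the full machinery of cubical small cancellation theory. An alternative we would try first is to cubulate $G$ as in the first part and then invoke Agol's theorem that cubulated hyperbolic groups are virtually compact special. This replaces the quotient theorem with Agol's result, but that result itself depends on Wise's theorem for the quasiconvex-hierarchy case.
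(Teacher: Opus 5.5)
Your proposal matches the paper, which also gives no proof of this statement and simply quotes Wise's theorem~\cite{Wise2011}. One caution about the optional sketch: the step ``after passing to finite index, we may assume the edge groups are almost malnormal in the vertex groups'' is not available at that stage, because $G$ is not yet known to be residually finite, so you cannot choose finite-index subgroups of $G$ with prescribed intersections with the vertex groups. Moreover, both the Hsu--Wise-type combination you use to cubulate and the Haglund--Wise combination theorem assume malnormal edge groups, so the sketch as written covers only the malnormal case; the general case needs the special quotient machinery itself, for instance an induction on the height of the edge groups.
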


In standard terminology about word-hyperbolic groups, a group is called \emph{rigid} if it admits no nontrivial splitting over a virtually cyclic group; see~\cite{Wilton2018Essential}. The concept of a rigid word-hyperbolic group will be important for us because, generally speaking, a faithful action by piecewise linear homeomorphisms precludes rigidity.

We note that for torsion-free groups, a group which decomposes as a nontrivial free product (i.e.~splits over the trivial group) is not rigid. Indeed, suppose $G\cong A*B$, with $A$ and $B$ both nontrivial and torsion-free. Then choosing an arbitrary nonidentity element $b\in B$, we see that $G$ splits over an infinite cyclic group by taking $A*\langle t\rangle$ and $B$, and amalgamating the generator $t$ with $b\in B$.

\begin{thm}[Corollary B in~\cite{Wilton2018Essential}]\label{thm:wilton}
    Suppose $G$ is a one-ended word-hyperbolic group with no element of order two. Then either:
    \begin{enumerate}
        \item $G$ contains a quasiconvex closed hyperbolic surface group.
        \item $G$ contains an infinite quasiconvex rigid subgroup.
    \end{enumerate}
\end{thm}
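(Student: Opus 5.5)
The plan is to run through the JSJ theory of one-ended hyperbolic groups. Since $G$ is one-ended and word-hyperbolic, it has Bowditch's canonical JSJ decomposition over two-ended subgroups, and I will use it to pass to smaller quasiconvex subgroups until I reach either a rigid group or a surface group.

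\emph{Trivial splitting.} If the JSJ decomposition of $G$ is trivial, there are two possibilities.
\begin{enumerate}
\item $G$ itself admits no splitting over a virtually cyclic group. Then $G$ is an infinite quasiconvex rigid subgroup of itself, giving conclusion (2).
\item $G$ is a cocompact Fuchsian group, i.e.\ it acts properly and cocompactly on $\mathbb{H}^2$. Then Selberg's lemma gives a torsion-free finite-index subgroup, which is a closed hyperbolic surface group. It is quasiconvex because it has finite index, giving conclusion (1).
\end{enumerate}

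\emph{Nontrivial splitting.} Otherwise, every vertex group of the JSJ decomposition is quasiconvex by Theorem~\ref{thm:vavrichek}. Each vertex group is of one of three types: two-ended, maximal hanging Fuchsian (orbifold type), or rigid relative to its incident edge groups. Quasiconvexity is transitive, so it suffices to find the desired subgroup inside some vertex group.

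\emph{Rigid-type vertex present.} If some vertex $v$ is of rigid type, $G_v$ is infinite and quasiconvex. However, $G_v$ need only be rigid \emph{relative} to its incident edge groups; it may still split absolutely over a virtually cyclic subgroup. To handle this, I descend recursively.
\begin{enumerate}
\item If $G_v$ is rigid as an abstract group, we are done.
\item Otherwise, pass to a one-ended piece of $G_v$. Since $G_v$ is infinite and not virtually cyclic (it was a rigid vertex), I will use its Stallings--Dunwoody decomposition. An infinite vertex group of that decomposition is again quasiconvex and one-ended. Then repeat the JSJ analysis on it.
\end{enumerate}
Termination of this descent comes from a finite hierarchy statement for hyperbolic groups over two-ended subgroups, in the spirit of Theorem~\ref{thm:louder-touikan} (Louder--Touikan, or Delzant--Potyagailo accessibility). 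Each terminal group is one of three kinds:
\begin{itemize}
\item rigid and infinite, giving conclusion (2);
\item a closed surface or orbifold group, from which a torsion-free finite-index subgroup gives conclusion (1);
\item a group whose JSJ vertices are all two-ended or hanging Fuchsian.
\end{itemize}

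\emph{All vertices two-ended or hanging Fuchsian.} This is the main obstacle. The hanging Fuchsian vertex groups are (orbifold) surface groups with boundary, so they are free rather than closed surface groups. One must assemble a closed surface from them. My approach is to build a map of a closed surface into the graph of spaces as follows.
\begin{enumerate}
\item Take finite covers of the orbifold pieces, chosen so that the degrees along each boundary component match up.
\item Glue these covers along the two-ended edge spaces, using the fact that each edge group is incident to at least two non-two-ended sides. Otherwise $G$ would fail to be one-ended, or the JSJ would collapse.
\end{enumerate}
The matching of boundary degrees is a combinatorial balancing problem. I would solve it with a rational weight (linear programming) argument on the boundary components, after passing to covers where all boundary degrees are equal.

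The hypothesis of no elements of order two is used here. It rules out reflection and mirror edges in the orbifolds, which would obstruct passing to orientable covers with controlled boundary. Finally, the resulting closed surface subgroup must be checked to be $\pi_1$-injective and quasiconvex. I would check this via a local-to-global argument: the surface is a union of quasiconvex pieces glued along cyclic subgroups, with uniformly bounded overlaps, so it is quasi-isometrically embedded.
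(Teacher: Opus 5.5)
There is a genuine gap, and it sits in exactly the case this paper needs. Your descent presumes that a vertex group which is rigid only \emph{relative} to its incident edge groups eventually leads either to an absolutely rigid group or to hanging Fuchsian pieces. Such a vertex group is often just a free group.

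For instance, let $G=F_2*_{\langle w\rangle}F_2$ be the double of $F_2$ along a word $w$ that is not a proper power and relative to which $F_2$ is rigid. Such words exist, and they lie in no proper free factor. Then:
\begin{enumerate}
\item $G$ is torsion-free, one-ended and hyperbolic.
\item Its JSJ decomposition has two rigid-type vertex groups, both isomorphic to $F_2$, and no hanging Fuchsian vertices.
\item $F_2$ has no one-ended Stallings--Dunwoody pieces.
\end{enumerate}
So your assertion that an infinite vertex group of that decomposition is one-ended fails. Your list of terminal groups is also incomplete: here the hierarchy terminates in trivial groups.

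Nor can conclusion (2) be reached another way. A finitely generated infinite subgroup of $G$ either fixes a vertex of the Bass--Serre tree, and is then free, or splits nontrivially over an edge stabilizer, which is cyclic or trivial. Thus $G$ has no infinite rigid subgroup at all, and the theorem asserts that $G$ contains a quasiconvex closed surface subgroup. Your gluing step cannot produce it, because there are no orbifold pieces to glue.

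The paper does not prove this statement; it cites Wilton's Corollary~B. The substance behind it is Wilton's Theorem~A, the main result of the cited paper, building on earlier work of Calegari and of Kim--Wilton: every one-ended hyperbolic graph of free groups with cyclic edge groups contains a quasiconvex surface subgroup. Corollary~B is then obtained by an accessibility argument of roughly the kind you sketch. One descends to a minimal one-ended group in the hierarchy, which is virtually such a graph of groups, and applies Theorem~A.

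Note that the case you are missing is the only one the paper actually uses. In Proposition~\ref{prop:pl-surface}, rigid subgroups have already been excluded by Corollary~\ref{cor:tf-hyp-rigid}, so the surface must come from precisely this situation.

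There are also two secondary issues in the case you do treat:
\begin{enumerate}
\item Matching boundary degrees of covers does not by itself give a $\pi_1$-injective closed surface. For example, the double of a single orbifold piece along its boundary maps to $G$ through a free group.
\item Your local-to-global quasiconvexity claim needs a real argument.
\end{enumerate}
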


Finally, we will require the following result of Haglund--Wise:

\begin{thm}[\cite{HW2008}, proof of Theorem 7.3, especially Propositions 6.5 and 7.2]\label{thm:canonical-completion}
    Let $X$ be a compact special cube complex and let $G=\pi_1(X)$ be word-hyperbolic. Suppose $H\leq G$ is quasiconvex. Then there is a finite index subgroup $G_0\leq G$ such that $G_0$ retracts to $H$.
\end{thm}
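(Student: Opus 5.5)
The plan is to realize $H$ geometrically as the fundamental group of a compact locally convex subcomplex mapping into $X$, and then apply Haglund--Wise canonical completion and retraction to that map.

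\textbf{Step 1: a compact core for $H$.} Let $\widetilde X$ be the universal cover of $X$, a CAT(0) cube complex on which $G$ acts properly and cocompactly. Since $G$ is word-hyperbolic, $\widetilde X$ is quasi-isometric to a Cayley graph of $G$. So a quasiconvex subgroup $H$ has quasiconvex orbits $H\cdot x_0$ in the $1$--skeleton of $\widetilde X$. By Haglund's theorem, in a hyperbolic CAT(0) cube complex the cubical convex hull of a quasiconvex subset lies in a bounded neighborhood of that subset. Hence the convex hull $\widetilde Y$ of $H\cdot x_0$ is $H$--invariant, convex, and $H$--cocompact. Setting $Y=H\backslash\widetilde Y$ gives a compact nonpositively curved cube complex with $\pi_1 Y=H$, together with a local isometry $Y\to X$ inducing the inclusion $H\hookrightarrow G$.

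\textbf{Step 2: canonical completion.} Because $X$ is special, there is a local isometry $X\to S(\Gamma)$ into the Salvetti complex of a right-angled Artin group, coming from the hyperplane structure of $X$. Composing gives a local isometry $Y\to S(\Gamma)$. For local isometries into Salvetti complexes (and more generally into special complexes), Haglund--Wise build a finite cover $C(Y,X)\to X$ into which $Y$ lifts as an embedded subcomplex. The idea is the same as completing an immersed graph to a covering by adding the missing edges, as in Stallings folding. For each generator $v\in V(\Gamma)$, the $v$--labelled edges of $Y$ form partial bijections on the vertex set, and we complete them to permutations, either by adding edges or by taking fiber products with $X$. The main obstacle is guaranteeing that the result is a genuine cube complex covering $X$. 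This requires adding higher cubes consistently, and it is exactly where specialness is used: the absence of self-osculation and inter-osculation of hyperplanes makes the completion of $1$--skeleta extend uniquely over squares. I would first check this in the Salvetti complex, where it reduces to the commutation relations, and then pull back along $X\to S(\Gamma)$.

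\textbf{Step 3: the retraction.} The same construction yields a cellular \emph{canonical retraction} $r\colon C(Y,X)\to Y$. Each added edge is sent to the edge of $Y$ at the corresponding vertex, or to that vertex itself, and this is extended over cubes. Set $G_0=\pi_1 C(Y,X)$, which has finite index in $G$. The inclusion $i\colon Y\hookrightarrow C(Y,X)$ is $\pi_1$--injective, since it is a lift of a local isometry, and it satisfies $r\circ i=\mathrm{id}_Y$. Therefore $r_*\colon G_0\to H$ is a retraction onto $i_*(H)=H\le G_0$.
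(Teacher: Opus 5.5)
Your route is the one the paper cites. Step~1 is Haglund--Wise's Proposition~7.2: Haglund's convex-hull theorem gives a compact local isometry $Y\to X$ with $\pi_1Y=H$. Steps~2--3 are the canonical completion and retraction of Proposition~6.5, built over the Salvetti complex and pulled back along $X\to S(\Gamma)$.

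There is one step that fails as written: your recipe for the retraction. You complete the partial bijections on $Y^0$ to permutations, so $C(Y,S(\Gamma))$ has vertex set $Y^0$ and $r$ must fix every vertex. A new $v$--edge closing a maximal $v$--path $y_0\to y_1\to\cdots\to y_n$ of $Y$ joins $y_n$ to $y_0$. For $n\ge 2$ these vertices are distinct and not joined by a $v$--edge of $Y$. So this edge can be sent neither to a vertex nor to ``the edge of $Y$ at the corresponding vertex.''

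The repair is as follows.
\begin{itemize}
\item Send such an edge to the reversed path $y_n\to y_{n-1}\to\cdots\to y_0$, and send new loops at isolated vertices to the vertex.
\item To extend over squares, use the consequence of local convexity that also makes the completed permutations $\sigma_u,\sigma_v$ commute for adjacent $u,v$. A $u$--edge of $Y$ at one vertex of a maximal $v$--path forces parallel $u$--edges along the whole path. These carry the path to a parallel maximal $v$--path through a strip of squares of $Y$.
\item Consequently, in every square of the completion, opposite sides are both old or both new. The image of the square's boundary is then the boundary of a rectangular grid of squares in $Y$, hence null-homotopic.
\item This defines $r$ on the $2$--skeleton. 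That is all you need for a homomorphism $r_*\colon \pi_1C(Y,X)\to H$ restricting to the identity on $H$. (The map $r$ extends over higher cubes anyway, since $Y$ is aspherical.)
\end{itemize}
Alternatively, close each $v$--path with a reversed copy of itself rather than a single edge. Then new edges really do fold onto old edges or collapse, as you describe. However, this adds vertices, and you would have to recheck that all squares still close.
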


\subsection{Polycyclic groups of piecewise linear homeomorphisms}\label{ss:polycyclic}

We gather some general facts about polycyclic groups and extensions of surface groups by polycyclic groups, as they relate to $\PL(S^1)$. Recall that a group is \emph{polycyclic} if it admits a finite length subnormal series with cyclic quotients. The following fact is an easy exercise which we leave to the reader.

\begin{lem}\label{lem:polycyclic-cyclic}
    Let $P$ be a virtually polycyclic group. If $P$ does not contain a copy of $\Z^2$, then $P$ is virtually cyclic.
\end{lem}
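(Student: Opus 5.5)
We argue by induction on the length of a subnormal series, after first reducing to the polycyclic case. Let $P_0\leq P$ be a polycyclic subgroup of finite index. Then $P_0$ contains no copy of $\Z^2$, and if $P_0$ is virtually cyclic, so is $P$. Hence we may assume $P$ is polycyclic. We will also use that every subgroup of a polycyclic group is polycyclic, which follows by intersecting the subnormal series with the subgroup.

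The key structural input is that a polycyclic group has a finite-index subgroup with a subnormal series whose factors are all infinite cyclic or trivial. Equivalently, it is poly-(infinite cyclic) up to finite index. This is standard: the Hirsch length $h(P)$, the number of infinite cyclic factors, is an invariant, and finite factors can be removed by passing to finite index. Granting this, we induct on $h(P)$.

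If $h(P)=0$, then $P$ is finite. If $h(P)=1$, then $P$ is virtually $\Z$. Now suppose $h(P)\geq 2$, and choose a finite-index subgroup $Q$ with a series
\[
1=Q_0\lhd Q_1\lhd\cdots\lhd Q_k=Q
\]
with infinite cyclic factors, where $k\geq 2$. Then $Q_2$ is an extension of $Q_1\cong\Z=\langle a\rangle$ by $\Z=\langle tQ_1\rangle$. Since $Q_1\lhd Q_2$, conjugation by $t$ acts on $\langle a\rangle$ as an automorphism of $\Z$, so $tat^{-1}=a^{\pm1}$. It follows that $t^2$ commutes with $a$.

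We claim $\langle a,t^2\rangle\cong\Z^2$. The element $t^2$ has infinite order modulo $\langle a\rangle$, so $\langle a\rangle\cap\langle t^2\rangle=1$, and the abelian group $\langle a,t^2\rangle$ is therefore free abelian of rank $2$. This contradicts the hypothesis, so $h(P)\leq 1$ and $P$ is virtually cyclic.

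The only nontrivial step is the reduction to a finite-index poly-$\Z$ subgroup, i.e.\ Hirsch's theorem that polycyclic groups are virtually poly-$\Z$. If one wants to avoid quoting it, one can instead induct directly on the length of a cyclic subnormal series.

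Write $1\lhd N\lhd P$ with $P/N$ cyclic and $N$ polycyclic with a shorter series. By induction $N$ is virtually cyclic. If $N$ is finite, then $P$ is finite-by-cyclic, hence virtually cyclic. Otherwise $N$ has a characteristic infinite cyclic subgroup $C$ of finite index; for instance, $C$ can be taken to be the intersection of all subgroups of a given index, which is finitely many since $N$ is finitely generated. Since $C$ is characteristic in $N$ and $N\lhd P$, the subgroup $C$ is normal in $P$.

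If $P/N$ is finite, we are done. Otherwise pick $t\in P$ mapping to a generator of $P/N\cong\Z$. Conjugation by $t$ acts on $C$ by $\pm1$, so $t^2$ commutes with $C$. Moreover $\langle t^2\rangle\cap C=1$, because $t^2$ has infinite order modulo $N\supseteq C$. This gives $\Z^2\leq P$, a contradiction.
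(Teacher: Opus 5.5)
Your proof is correct. The paper gives no argument to compare against, since it leaves this lemma to the reader as an easy exercise.

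Your second, self-contained induction on the length of a cyclic subnormal series is a complete solution on its own. It makes the appeal to Hirsch's theorem in the first half unnecessary. There, the "induction on $h(P)$" is never actually used, because you show directly that $h(P)\geq 2$ produces a $\Z^2$. The inductive step uses a characteristic infinite cyclic subgroup of finite index, and conjugation acts on it by $\pm1$, so a square centralizes it. This is exactly the mechanism the authors use in the first paragraph of the proof of Proposition~\ref{prop:fuchs-extension}, so your argument fits the paper well.

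You assert one step without justification: that a finite-by-cyclic group is virtually cyclic. This is immediate. If $N$ is finite, $P/N\cong\Z$, and $t$ maps to a generator, then $\langle t\rangle\cap N=1$ and $P=N\langle t\rangle$, so $[P:\langle t\rangle]=|N|$.

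Your construction of $C$ is also fine, once "a given index" is read as the index $m=[N:Z]$ of some infinite cyclic $Z\leq N$. The intersection of the finitely many index-$m$ subgroups is characteristic and of finite index. It is contained in $Z$, hence infinite cyclic.
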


The following is not particularly difficult, but nevertheless will be useful for establishing structural results about hyperbolic subgroups of $\PL_+(S^1)$. Recall that a \emph{Fuchsian group} is a discrete subgroup of $\mathrm{PSL}_2(\R)$ that is not virtually cyclic. A Fuchsian group is \emph{cocompact} or \emph{non-cocompact} if the corresponding quotient of the symmetric space $\mathbb H^2$ is compact or noncompact, respectively.

\begin{prop}\label{prop:fuchs-extension}
    Let $G\leq\PL_+(S^1)$ be a group containing no copy of $\Z^2$. Suppose that there is an exact sequence \[1\longrightarrow P\longrightarrow G\longrightarrow Q\longrightarrow 1,\]
    where $P$ is virtually polycyclic and where $Q$ is a Fuchsian group. Then $P$ is a finite cyclic group that is central in $G$. Moreover, if $Q$ is cocompact then $G$ is virtually a closed hyperbolic surface groups, and if $Q$ is non-cocompact then $G$ is virtually free.
\end{prop}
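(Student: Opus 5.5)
We first show that $P$ is finite, then that it is cyclic and central, and finally pass to a torsion-free finite-index subgroup.

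\emph{$P$ is finite.} Since $P\leq G$ contains no copy of $\Z^2$, Lemma~\ref{lem:polycyclic-cyclic} shows that $P$ is virtually cyclic. Suppose $P$ were infinite. Then $P$ has a characteristic infinite cyclic subgroup $C=\langle c\rangle$ of finite index; for instance, take the intersection of all subgroups of $P$ of the (finite) minimal index among infinite cyclic subgroups. Then $C$ is normal in $G$, so conjugation gives a homomorphism $G\to\mathrm{Aut}(C)\cong\Z/2$. Its kernel $G_1$ has index at most two and centralizes $c$. Because $Q$ is Fuchsian, the image of $G_1$ in $Q$ is non-elementary and so contains a nonabelian free group. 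Lift two free generators $a,b$ to $G_1$. The element $a$ commutes with $c$ and, since the image of $a$ in $Q$ has infinite order, $a$ has infinite order and $\langle a\rangle\cap\langle c\rangle=1$. Hence $\langle a,c\rangle\cong\Z^2$, contradicting the hypothesis. Therefore $P$ is finite.

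\emph{$P$ is cyclic and central.} A finite subgroup of $\Homeo_+(S^1)$ is cyclic and conjugate to a group of rotations; this is standard, as it acts freely, and we can average a metric or use the rotation number. So $P=\langle p\rangle$ is cyclic and acts freely, with $\rot(p)=1/m$ up to conjugation, where $m=|P|$. Every $g\in G$ normalizes $P$, so $gpg^{-1}=p^k$ for some $k$. Since $g$ preserves orientation and rotation number is conjugacy invariant, $\rot(p^k)=\rot(p)$. Because $P$ acts freely as a group of rotations, the map $p^j\mapsto\rot(p^j)$ is injective, so $p^k=p$ and $P$ is central.

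\emph{The virtual structure.} Fuchsian groups are virtually torsion-free by Selberg's lemma. Choose a torsion-free finite-index subgroup $Q_0\leq Q$, which is a surface group: closed if $Q$ is cocompact, and free if not. Let $G_0$ be its preimage, a central extension $1\to P\to G_0\to Q_0\to 1$.
\begin{itemize}
\item If $Q_0$ is free, the sequence splits, so $G_0\cong P\times Q_0$ and $Q_0$ embeds in $G$ with finite index. Hence $G$ is virtually free.
\item If $Q_0$ is a closed surface group, the extension is classified by an Euler class $e\in H^2(Q_0,P)\cong\Z/m$. Restricting to a finite-index subgroup of $Q_0$ multiplies $e$ by the index, so passing to a subgroup of index divisible by $m$ kills $e$. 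The extension then splits there, and the surface group sits in $G$ with finite index.
\end{itemize}

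The main obstacle is the infinite case in the first step, namely producing an element of infinite order that centralizes a cyclic normal subgroup while generating with it a copy of $\Z^2$. The argument above handles this using only the index-two centralizer, and does not need any piecewise linear structure beyond the absence of $\Z^2$.
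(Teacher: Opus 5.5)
Your proposal is correct and follows essentially the same route as the paper: a characteristic infinite cyclic subgroup of $P$ with a finite-index centralizer rules out infinite $P$, conjugation invariance of the rotation number gives centrality, and passing to a cover of degree divisible by $|P|$ kills the Euler class in $H^2(Q_0,P)$ (you handle the free case by a direct splitting rather than by the vanishing of $H^2$, an equivalent step). Like the paper, you implicitly assume $Q$ is finitely generated when you invoke Selberg's lemma; this is harmless in the paper's application, where $G$ is finitely generated.
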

\begin{proof}
    Lemma~\ref{lem:polycyclic-cyclic} shows that $P$ is virtually cyclic. If $P$ is infinite then $P$ admits a finite index characteristic cyclic subgroup $P_0$. Since the automorphism group of $P_0$ is finite, there is a finite index subgroup $G_0$ of $G$ centralizing $P_0$. Choose an element $g$ in $G_0$ which maps to an infinite order element in $Q$. Then, no power of $g$ can lie in $P_0$, and $\langle g,P_0\rangle\cong\Z^2$, a contradiction. It follows that $P$ is finite.

    It is a standard fact that finite subgroups of $\Homeo_+(S^1)$ are finite and cyclic; see~\cite{Navas2011,KK21-book}. It follows that $P$ is cyclic. Moreover, the rotation number identifies a finite group of orientation preserving homeomorphisms of the circle with a finite cyclic group. Since the rotation number is conjugation invariant, it follows that $P$ is central in $G$.

    Let $Q_0$ be a torsion-free finite index subgroup of $Q$.
    
    {\bf Non-cocompact case:} We have that $Q_0$ is free. A central extension of $Q_0$ is classified by its Euler class in $H^2(Q_0,P)$. This cohomology group is trivial because $Q_0$ has cohomological dimension one; it follows that a finite index subgroup if $G$ splits as a direct product of a free group and a finite group, and so $G$ is virtually free.

    {\bf Cocompact case:} Suppose $P$ is cyclic of order $n$. As in the non-cocompact case, we have that a finite index subgroup of $G$ is a central extension of $Q_0$ by $P$ and is classified by a class $e\in H^2(Q_0,P)\cong P$. For convenience, identify $Q_0$ with $\pi_1(S)$, a closed orientable hyperbolic surface. Taking any surjective homomomorphism $Q_0$ to $P$, we obtain a connected degree $n$ cover $q\colon \tilde S\longrightarrow S$. Computing, we see $q^*e=0$, and so there is a further finite index subgroup $Q_{00}\leq Q_0$ whose corresponding central extension by $P$ splits. As in the non-cocompact case, it follows that $G$ has a finite index subgroup that is a closed hyperbolic surface group.
\end{proof}

The usefulness of Proposition~\ref{prop:fuchs-extension} comes from the following result of Dunwoody--Swenson, which is one of the key tools we apply in the analysis of hyperbolicity and piecewise linear homeomorphisms:

\begin{thm}[{Algebraic torus theorem~\cite{DS2000}}]\label{thm:dunwoody-swenson}
    Suppose that $G$ is a finitely generated group, and $H\leq G$ is virtually polycyclic and satisfies $e(G,H)>1$. Then at least one of the following conclusions holds:
    \begin{enumerate}
        \item $G$ is virtually polycyclic.
        \item There is an exact sequence \[1\longrightarrow P\longrightarrow G\longrightarrow Q\longrightarrow 1,\]
    where $P$ is virtually polycyclic and where $Q$ is a Fuchsian group.
        \item $G$ splits nontrivially over a virtually polycyclic subgroup.
    \end{enumerate}
\end{thm}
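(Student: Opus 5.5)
This is the algebraic torus theorem of Dunwoody and Swenson, which the paper quotes rather than proves. The plan below reconstructs the standard argument by induction on the Hirsch length $n$ of $H$. The input is the hypothesis $e(G,H)>1$. Translated through the Schreier graph of $G/H$, it yields an $H$--almost invariant subset $A\subseteq G$ that is proper: $A$ is a finite union of right $H$--cosets up to symmetric difference, and neither $A$ nor its complement is $H$--finite. Equivalently, $A$ is a commensurated subset for the right action of $G$ on $G/H$ which is not transfixed. Throughout, I would replace $H$ by a subgroup commensurable with it whenever convenient, since this preserves both the relative ends and virtual polycyclicity.

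\textbf{Nested case.} Compare $A$ with its translates $gA$, $g\in G$. Suppose the family $\{gA, gA^*\}$ can be chosen nested: for all $g,g'$, one of the four intersections $gA^{(*)}\cap g'A^{(*)}$ is empty, or at least $H$--finite. Then Dunwoody's construction of a tree from a nested $G$--invariant family of ``cuts'' applies directly. It produces a $G$--tree without global fixed point whose edge stabilizers are commensurable with $H$, hence virtually polycyclic, giving conclusion (3). To make the family nested, I would first apply a Sageev-type cubulation, or Dunwoody's ``good position'' argument. This replaces $A$ by an almost invariant set over a subgroup of $H$ that crosses as few translates as possible.

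\textbf{Crossing case.} Otherwise $A$ crosses some translate $gA$, meaning all four corners are infinite. I would then use the standard symmetry of crossing for polycyclic subgroups (Scott's lemma) and argue by induction on Hirsch length.
\begin{enumerate}
\item If $H\cap gHg^{-1}$ has smaller Hirsch length, the corners produce an almost invariant set over that intersection. The inductive hypothesis then applies, and the outcome can be pushed to one of the three conclusions.
\item If no drop occurs, then $H$ is commensurable with all its conjugates relevant to the crossing, so $H$ is ``commensurised'' by a large subgroup of $G$.
\end{enumerate}

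In the second sub-case, pass to the commensurator and use a Kropholler-type argument to find a normal virtually polycyclic subgroup $P\trianglelefteq G$, commensurable with $H$ or of Hirsch length $n-1$. In the quotient $Q=G/P$, the crossing configuration projects to a two-ended crossing pattern. Following Bowditch, it gives $Q$ a convergence action on a circle, built from the cyclic order of the ends of $Q$ cut out by the crossing translates. The convergence group theorem of Tukia, Gabai and Casson--Jungreis then shows that $Q$ is virtually Fuchsian, or else virtually cyclic, in which case $G$ is virtually polycyclic. This gives conclusions (1) and (2).

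\textbf{Main obstacle.} The hard step is the crossing case: extracting the normal subgroup $P$ and the circle on which $Q$ acts, and checking that the action of $Q$ is a genuine convergence action. This is exactly where Dunwoody--Swenson do their most delicate work.
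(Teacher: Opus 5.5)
The paper does not prove this statement; it quotes it from Dunwoody--Swenson, so your opening sentence is all the paper itself relies on. Your reconstruction of their argument, however, has a genuine gap, and it sits exactly where conclusion (2) comes from. Sub-case (1) of your crossing case asserts that when $H\cap gHg^{-1}$ has smaller Hirsch length, a corner such as $A\cap gA$ is a proper almost invariant set over $H\cap gHg^{-1}$, so that induction applies. For a strong crossing this is false. The coboundary of $A\cap gA$ contains the part of the coboundary of $A$ lying inside $gA$, which is $H$--finite but not $(H\cap gHg^{-1})$--finite.

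Test it on the triangle group $G=\Delta(2,3,7)$, with $H=\langle h\rangle$ for a loxodromic $h$ and $A$ the elements on one side of the axis of $h$. Then $e(G,H)=2$, since $H\backslash\mathbb{H}^2$ is an open annulus. The group $G$ has Serre's property FA, so it admits no splitting and your nested case cannot occur. Hence $A$ crosses some translate $gA$, the axes of $h$ and $ghg^{-1}$ cross, and $H\cap gHg^{-1}=1$. You are therefore in sub-case (1), which would give a proper almost invariant set over the trivial group, i.e.\ $e(G)>1$; but $G$ is one-ended. If you intended minimality of $n$ to rule this sub-case out, the example shows it genuinely occurs. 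Sub-case (2) never applies, so your argument never reaches the correct conclusion, which here is (2) with $P=1$.

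The missing idea is that a drop in Hirsch length is not a reduction to a smaller instance; it is the heart of the theorem. Induction should only be used to assume that no VPC subgroup of Hirsch length less than $n$ has more than one relative end. Under that assumption the drop case has to be analysed head-on:
\begin{enumerate}
\item the intersections $H\cap gHg^{-1}$ arising from crossings have Hirsch length $n-1$;
\item one must show they are mutually commensurable and commensurated by $G$, and extract from them a normal VPC subgroup $N$ of Hirsch length $n-1$;
\item only then, in $G/N$, where $H/N$ is two-ended and crossing translates have finite stabilizer intersections, can you build the circle and invoke the convergence group theorem.
\end{enumerate}
Your sub-case (2), where $H$ is commensurable with its crossing conjugates, has no such drop and hence no source for $N$; it is not where the Fuchsian quotient comes from.

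Two smaller slips. First, an $H$--almost invariant set is a union of cosets $Hg$ with $A\triangle Ag$ $H$--finite for every $g$; as written, your description makes $A$ itself $H$--finite, contradicting properness. Second, the convergence group theorem yields a Fuchsian group only modulo a finite kernel, which must be absorbed into $P$ to obtain conclusion (2) as stated.
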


\section{Permutation cohomology and piecewise linear homeomorphisms}\label{ss:cohomology}
In this section, we develop some cohomological ideas which will be central to establishing non-existence of piecewise linear actions of various groups. One of the primary objectives of this section, and certainly the most time-consuming objective, is to show how to produce the subgroup $H\leq G$ in Theorem~\ref{thm:dunwoody-swenson}, under fairly general hypotheses. The methods are ultimately cohomological in nature.

\subsection{Setting up permutation cohomology}

A fundamental object relating cohomology and dynamics in this paper will be the jump cocycle.
Let $f\in  \PL_+(S^1)$ and let $x\in S^1$. The \textit{jump} of $f$ at $x$ is defined by
\begin{align*}
    J_f(x):=\log \frac{D_+f(x)}{D_-f(x)}.
\end{align*}

There are some related functions which will be important in this paper, which we define now. For $f$ without periodic points, we may define the total jump and forward total jump: the \textit{total jump} of $f$ along the orbit of $x$ is defined by
\begin{align*}
    C_f(x):=\sum_{k\in\mathbb{Z}}J_f(f^k(x)),
\end{align*}
and the \textit{forward total jump} of $f$ at $x$ by
\begin{align*}
    C^w_f(x):=\sum_{k\geq0} J_f(f^k(x)).
\end{align*}
Since the number of break points is finite, we have that $C_f(x)$ and $C^w_f(x)$ are well defined. 

Note that $J_f(x)$ can generally take on arbitrary real values. It will be convenient to introduce a slightly modified integral version of the jump, which makes sense for elements of a finitely generated subgroup $G\leq PL_+(S^1)$, and which we will denote by $Z_f(x)$ for $f\in G$. Here, we will suppress $G$ from the notation when it is implicit or unimportant. The function is constructed by identifying the additive subgroup of $\R$ generated by logarithms of derivatives of slopes of elements of $G$ with a finitely generated abelian group $\Z^d$ via an isomorphism $\lambda$; here, $d$ depends on $G$. Then, $Z_f(x)$ is simply defined by \[Z_f(x)=\lambda\circ J_f(x).\]

The basic properties of these functions are given in the following lemma. The proof is routine and is left to the reader.

\begin{lem} \label{properties jump}
    Let $f$ and $g$ be two elements in a finitely generated subgroup $G\leq \PL(S^1)$, and let $x\in S^1$ be any point. Then, we have
    \begin{itemize}
        \item $J_{fg}(x)=J_f(g(x))+J_g(x)$, and $Z_{fg}(x)=Z_f(g(x))+Z_g(x)$.
        \item $C_f(x)=C_{g^nfg^{-n}}(g^n(x))$ for all $n\in \mathbb{Z},$ whenever the total jump is defined.
    \end{itemize}
\end{lem}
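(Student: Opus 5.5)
The plan is to read both identities directly off the chain rule for one-sided derivatives, then sum along orbits. Everything stays in $\PL_+(S^1)$, since jumps are only defined there; the statement's $\PL(S^1)$ should be read with this in mind.

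For the first bullet, fix $x\in S^1$. Since $g$ preserves orientation, it sends right-neighbourhoods of $x$ to right-neighbourhoods of $g(x)$, and likewise for left-neighbourhoods. So the one-sided chain rule gives
\[
D_\pm(fg)(x)=D_\pm f(g(x))\cdot D_\pm g(x).
\]
Taking the quotient of the $+$ and $-$ versions and then logarithms gives $J_{fg}(x)=J_f(g(x))+J_g(x)$.

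For the integral version we need that every jump of an element of $G$ lies in the additive subgroup $\Lambda\leq\R$ on which $\lambda$ is defined. Take $\Lambda$ to be the group generated by the logarithms of slopes of the finitely many generators of $G$. By the chain rule, any slope of any element of $G$ is a product of generator slopes and their inverses, so its logarithm lies in $\Lambda$. A jump is a difference of two such logarithms, so it also lies in $\Lambda$. Moreover $\Lambda$ is finitely generated and torsion-free, hence isomorphic to some $\Z^d$. Since $\lambda$ is additive, applying it to the real identity gives $Z_{fg}(x)=Z_f(g(x))+Z_g(x)$.

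For the second bullet, write $h=g^nfg^{-n}$. First apply the cocycle identity twice:
\[
J_h(y)=J_{g^n}(fg^{-n}y)+J_f(g^{-n}y)+J_{g^{-n}}(y).
\]
Applying the cocycle identity to $g^ng^{-n}=\mathrm{id}$, whose jump is zero everywhere, gives $J_{g^{-n}}(y)=-J_{g^n}(g^{-n}y)$. Substituting, and writing $z=g^{-n}y$, we get
\[
J_h(g^nz)=J_f(z)+J_{g^n}(fz)-J_{g^n}(z).
\]

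Now take $y=h^k(g^nx)$. Then $z=f^kx$, since $h^k=g^nf^kg^{-n}$. Summing over $k\in\Z$:
\[
C_h(g^nx)=\sum_k J_f(f^kx)+\sum_k\bigl(J_{g^n}(f^{k+1}x)-J_{g^n}(f^kx)\bigr).
\]

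The only step needing care is the second sum. It telescopes, and the rearrangement is legitimate because only finitely many terms are nonzero. This holds because $J_{g^n}$ is supported on the finite breakpoint set of $g^n$, and the orbit of $x$ under $f$ is injective, as $f$ has no periodic points whenever $C_f$ is defined. Hence both sums are finite and the second one vanishes, so $C_f(x)=C_{g^nfg^{-n}}(g^n(x))$.

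The hypothesis is consistent: $h$ is conjugate to $f$, so it also has no periodic points and $C_h$ is defined. No real obstacle is expected.
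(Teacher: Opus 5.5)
Your proof is correct, and it is exactly the routine verification the paper omits (it says only that the proof is left to the reader): the one-sided chain rule gives the cocycle identity, your check that every jump lies in the finitely generated group of log-slopes justifies passing to $Z$, and the conjugation invariance of $C_f$ follows from the telescoping sum you describe, which is legitimate because the $f$--orbit of $x$ is injective and $J_{g^n}$ is finitely supported. Your remark that the statement should be read in $\PL_+(S^1)$, where jumps are defined, is also appropriate.
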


Let $G\leq \PL_+(S^1)$ be a finitely generated group, and let $P$ denote the set of breakpoints for a fixed finite set of generators and their inverses. The totality of breakpoints of elements of $G$ lies in the set $GP=\{g\cdot p\mid g\in G,\,p\in P\}$. Since $G$ is finitely generated, $GP$ is a finite union of orbits with representatives $\{x_1,\ldots,x_m\}$, which admit stabilizers $\{G_1,\ldots,G_m\}$ respectively.

\subsubsection{The real cocycle}
We obtain a $G$-module decomposition \[V=\bigoplus_{GP}\R\cong\bigoplus_{i=1}^m \R[G/G_i];\] here, the $G$--action on the left hand side is via the action of $G$ on $GP$, and on the right hand side by $G$ acting on its cosets; all actions of $G$ are on the left. Alternatively, $V$ can be viewed as the set of $\R$--valued functions on $GP$ which are zero at all but finitely many points.

Writing $c(g)(x)=J_{g^{-1}}(x)$ furnishes a $1$--cocycle with coefficients in $V$. Indeed, an easy application of Lemma~\ref{properties jump} shows that for all $g,h\in G$, we have \[c(gh)=c(g)+g\cdot c(h).\] It is a trivial computation that $c(g)$ is finitely supported for each $g$ and is an element of $V=\bigoplus_{GP}\R$. We write $c=(c_1,\ldots,c_m)$ according to the direct sum decomposition of $V$ above.

\subsubsection{The integral cocycle}

We have a similar theory for the integral jump function $Z_f(x)$. We retain the notation of $G\leq \PL_+(S^1)$ and $P\subseteq S^1$, and we write $b$ for the cocycle defined by $b(g)(x)=Z_{g^{-1}}(x)$. Writing $W=\bigoplus_{GP}\Z^d$ for the set of finitely supported integral valued functions on $GP$, we again see that
\[W=\bigoplus_{i=1}^m \Z^d[G/G_i].\]

\subsection{The cocycle is not a coboundary}
We claim that the cocycle $b$, and therefore also $c$, is not a coboundary. This will establish the following, which will then fairly easily rule out faithful hyperbolic $n$--manifold group actions by piecewise linear homeomorphisms on the circle, for $n\geq 3$:

\begin{lem}\label{lem:coho-nontrivial}
    Suppose that $G$ is nonabelian.
    Then the cocycle $c$ represents a nontrivial cohomology class of $H^1(G,V)$. Thus, there is an $i$ for which $c_i$ represents a nontrivial cohomology class in $H^1(G,\R[G/G_i])$.
\end{lem}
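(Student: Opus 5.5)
The plan is to argue by contraposition. Assume $c$ is a coboundary and show that $G$ is then conjugate, inside $\Homeo_+(S^1)$, to a group of rotations; in particular $G$ would be abelian. The second sentence of the lemma is formal: $H^1(G,V)\cong\bigoplus_{i=1}^m H^1(G,\R[G/G_i])$ because cohomology commutes with finite direct sums, so if every $c_i$ were a coboundary then $c$ would be one too.

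\textbf{Step 1: rewrite the coboundary as an identity about jumps.} Suppose $c(g)=g\cdot v-v$ for some finitely supported $v\colon GP\to\R$. With $c(g)(x)=J_{g^{-1}}(x)$ and replacing $g$ by $g^{-1}$, this becomes
\[J_g(x)=v(x)-v(g(x))\quad\text{for all }g\in G,\ x\in S^1,\]
after fixing the sign convention for the action on $V$. Let $\Delta\colon S^1\to\R$ be the finitely supported function extending $v$ by zero.

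\textbf{Step 2: untwist the jumps.} Choose $u\colon S^1\to\R$ piecewise constant with prescribed jump $\Delta(p)$ at each $p\in\supp v$. Put $F_g(x)=\log Dg(x)+u(x)-u(g(x))$. By the identity from Step 1, the jump of $F_g$ at each point cancels, so $F_g$ is continuous and piecewise constant, hence constant. Then the measure $\mu=e^{u}\,dx$ satisfies $g_*\mu=e^{-F_g}\mu$. Total mass is preserved, so $F_g=0$ and $G$ preserves the finite, fully supported, atomless measure $\mu$. Its distribution function conjugates $G$ into the rotation group. Equivalently, it gives a PL homeomorphism $h$ with $J_h=-\Delta$ such that each $hgh^{-1}$ has no breakpoints and is therefore a rotation. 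Either way $G$ is abelian, which is the desired contradiction.

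\textbf{Main obstacle: the normalization $\sum_x v(x)=0$.} A piecewise constant function on the circle must have jumps summing to zero, and so must $J_h$, since $\prod D_+h/D_-h=1$ around the circle. Evaluating the augmentation on $c(g)=g\cdot v-v$ gives no information about $\sum v$. So if $s=\sum v\neq 0$, one must place a compensating jump $-s$ at an auxiliary point $q\notin GP$.
\begin{itemize}
\item Then $F_g$ has jumps $-s$ at $q$ and $+s$ at $g^{-1}q$. So $g_*\mu$ is a multiple of $\mu$ on the arc between $q$ and $g^{-1}q$, and $e^{s}$ times that multiple on the complementary arc.
\item I would first try to rule out $s\neq0$ dynamically. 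Iterating this along $g^n$, the Radon--Nikodym derivatives of $g^n$ with respect to $\mu$ take at most two values, with ratio $e^{s}$. Combined with mass preservation, this forces a uniformly bounded distortion for every element of $G$.
\item Bounded distortion should give a Denjoy/Herman-type rigidity conclusion via Theorem~\ref{thm:herman}, and a contradiction with $s\neq 0$ using an element with a fixed point or irrational rotation number.
\end{itemize}
Alternatively, $v$ is only determined up to $G$-invariant finitely supported functions, that is, functions supported on finite orbits. I would try to use that freedom to make $\sum v=0$. If $G$ has a finite orbit, I would instead pass to a finite-index subgroup fixing a point and cut the circle open there to land in $\PL_+(I)$. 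I expect this normalization step to be the genuinely delicate part of the proof.
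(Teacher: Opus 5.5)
Your Step 2 is correct when $s=\sum_x v(x)=0$, and it then even shows that $G$ is PL-conjugate into the rotations. The case $s\neq 0$, which you flag yourself, is a genuine gap, and the routes you sketch for it aim at something false. The coboundary hypothesis does not rule out $s\neq0$, and no contradiction with $s\neq 0$ can come from an element of irrational rotation number.

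Here is an example. Take $\kappa>0$, $\Phi(x)=\kappa^{-1}\log(1+\kappa x)$ on $[0,1)$, $L=\Phi(1)$, and $f=\Phi^{-1}R_\alpha\Phi$, where $R_\alpha$ is rotation by $\alpha$ on $\R/L\Z$ with $\alpha/L\notin\mathbb{Q}$.
\begin{itemize}
\item A direct computation shows $f\in\PL_+(S^1)$ with slopes $e^{\kappa\alpha}$ and $e^{\kappa\alpha}/(1+\kappa)$.
\item $f$ has exactly two breakpoints, $x_1=\Phi^{-1}(L-\alpha)$ and $f(x_1)=0$, with jumps $\mp\log(1+\kappa)$.
\item For $G=\langle f\rangle$ the cocycle $c$ is a coboundary, with $v$ supported at the single point $0$ and $|v(0)|=\log(1+\kappa)$. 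So $s\neq 0$.
\item Since $G$ has no finite orbit, $v$ is unique, so your renormalization alternative cannot help.
\item The invariant density $1/(1+\kappa x)$ is not piecewise constant, which is exactly why your $u$ cannot exist.
\end{itemize}
This $G$ is cyclic, so the lemma is not contradicted. But it shows that when $s\neq 0$ what you must prove is that $G$ is abelian, and the contradiction has to come from non-commutativity, not from $s$.

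The missing idea is the one the paper uses. By H\"older's theorem, a nonabelian $G$ contains a nontrivial $f$ with a fixed point, and a coboundary is incompatible with such an $f$ whatever $s$ is. The paper uses Lemma~\ref{lem:break-accum}: $f^n$ has at least $Kn$ breakpoints, so the cocycle evaluated at $f^n$ is nonzero at at least $Kn$ points. A coboundary $g\cdot v-v$, by contrast, is supported on at most $2|\supp v|$ points, independently of $g$.

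Your bounded-distortion bullet can be completed in the same spirit, without the measure $\mu$.
\begin{itemize}
\item The coboundary identity bounds the total variation of the piecewise constant function $\log Dg$ around the circle by $\sum_x|v(x)-v(gx)|\leq 2\|v\|_{\ell^1}$.
\item Since $\int_{S^1}Dg=1$, $\log Dg$ takes values of both signs, so $|\log Dg|\leq 2\|v\|_{\ell^1}$ for every $g\in G$.
\item At an endpoint $a$ of a component of $\supp f$, the one-sided derivative of $f^n$ is $R^n$ with $R\neq 1$, which is unbounded.
\end{itemize}
Once this is in place, Step 2 and the normalization problem become unnecessary. Likewise the finite-orbit, cut-open-to-$\PL_+(I)$ alternative, for which you give no conclusion, is not needed.
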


We will give a self-contained proof in the integral case. One can also give another direct proof in the real case using ideas from ~\cite{LMT2019}.
To establish Lemma~\ref{lem:coho-nontrivial}, we first show:
\begin{lem}\label{lem:break-accum}
    Let $f\in \PL_+(I)$ be nontrivial. Then the number of breakpoints of $f^n$ is bounded below by a nonconstant linear function in $n$. That is, there is a constant $K>0$ depending only on $f$ such that the number of breakpoints of $f^n$ is at least $K\cdot n$.
\end{lem}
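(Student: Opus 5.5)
The plan is to localize to a single component of the support of $f$, show that some orbit of breakpoints there carries nonzero total jump, and observe that such an orbit forces roughly $n$ breakpoints of $f^n$.

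\emph{Step 1: localize.} Since $f\in\PL_+(I)$ is nontrivial and fixes $0$ and $1$, Lemma~\ref{lem:boundary-finite} lets us choose a component $(a,b)$ of $I\setminus\Fix(f)$. Replacing $f$ by $f^{-1}$ if necessary (breakpoints of $f^{-n}$ are the images under $f^n$ of those of $f^n$, so their number is the same), we may assume $f(x)>x$ on $(a,b)$. Let $\lambda=D_+f(a)$ and $\mu=D_-f(b)$. Near $a$ we have $f(x)=a+\lambda(x-a)$, and $f(x)>x$ forces $\lambda>1$. Symmetrically, near $b$ we have $\mu<1$.

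\emph{Step 2: some orbit has nonzero total jump.} The breakpoints of $f$ in $(a,b)$ form a finite set, so they lie in finitely many $f$--orbits $O_1,\dots,O_r$ inside $(a,b)$. Each orbit is infinite, since $f$ has no fixed points there. Summing the jumps of $f$ over all of $(a,b)$ telescopes the logarithm of the slope:
\[\sum_{x\in(a,b)}J_f(x)=\log\mu-\log\lambda<0.\]
Each breakpoint lies in exactly one $O_j$, so this sum equals $\sum_j C_f(x_j)$ with $x_j\in O_j$. Hence some orbit $O=\{z_k=f^k(z)\}_{k\in\Z}$ has total jump $C:=C_f(z)\neq0$.

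\emph{Step 3: count breakpoints of $f^n$ on $O$.} By Lemma~\ref{properties jump}, the jump of $f^n$ at $z_i$ is
\[J_{f^n}(z_i)=\sum_{k=i}^{i+n-1}J_f(z_k).\]
Let $j_k=J_f(z_k)$. These vanish outside a finite window $m\le k\le M$. If $n>M-m$, then for every $i$ with $M-n+1\le i\le m$ the window $[i,i+n-1]$ contains $[m,M]$, so $J_{f^n}(z_i)=C\neq0$. This gives at least $n-(M-m)$ distinct breakpoints $z_i$ of $f^n$. For the finitely many small $n$, each $f^n$ is nontrivial ($\PL_+(I)$ is torsion-free) and therefore has at least one breakpoint. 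Indeed, a PL map fixing both endpoints with no breakpoint is the identity. Choosing $K>0$ small enough covers both ranges, so the number of breakpoints of $f^n$ is at least $K\cdot n$ for all $n\ge1$.

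The only delicate point is Step 2. Breakpoints could a priori cancel within an orbit, and this is ruled out by the telescoping identity together with the sign observation $\lambda>1>\mu$. Everything else is bookkeeping with Lemma~\ref{properties jump}.
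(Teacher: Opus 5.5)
Your proof is correct. It uses the same two ingredients as the paper: localizing to a component of $I\setminus\Fix(f)$ on which $f(x)>x$, and the telescoping identity $\sum J_f=\log\mu-\log\lambda\neq 0$, which is forced by $\lambda>1>\mu$. The counting step, however, is genuinely different.

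The paper applies the identity directly to $f^n$, whose jumps on the component sum to $n(\log\mu-\log\lambda)$. It then bounds each single jump of $f^n$ by $\Sigma=\sum_p|J_f(p)|$, because an orbit segment $x,f(x),\dots,f^{n-1}(x)$ meets each breakpoint of $f$ at most once. Dividing gives $B_n\geq n|\log\mu-\log\lambda|/\Sigma$ for all $n\geq 1$ at once, with an explicit constant and without decomposing into orbits or treating small $n$ separately.

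You apply the identity only to $f$ and split it over $f$--orbits to find one orbit with nonzero total jump $C$. The sliding-window formula then exhibits $n-(M-m)$ explicit breakpoints of $f^n$, each with jump exactly $C$, and you handle small $n$ by torsion-freeness. This costs a case split but gives more information. It locates the breakpoints of $f^n$ as a block of consecutive points on a single orbit. It also yields asymptotic slope $1$, whereas the paper's slope $|\log\mu-\log\lambda|/\Sigma$ is at most $1$ by the triangle inequality.
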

\begin{proof}
    Restricting to a smaller subinterval if necessary and replacing $f$ by its inverse if necessary, we may assume that $f(x)>x$ for all $x$ in the interior of $I$.

    Write $R>1$ and $0<L<1$ for the derivatives of $f$ at $0$ and $1$ (from the right and left respectively. Adding up the jump over all break points of $f$ in the interior of $I$ gives $\log L-\log R$; similarly, replacing $f$ by $f^n$ gives $n(\log L-\log R)$. Now, Lemma~\ref{properties jump} easily implies that for all $x\in I$, we have \[J_{f^n}(x)=\sum_{k=0}^{n-1}J_f(f^k(x)).\] 
    Since $f$ is strictly increasing on the interior of $I$, we have that each point in the orbit of $x\in (0,1)$ meets each breakpoint of $f$ at most once. Thus, $|J_{f^n}(x)|$ is bounded above by the sum $\Sigma$ of terms of the form $|J_f(p)|$, where $p$ ranges over breakpoints of $f$ in $I$. Clearly $\Sigma$ is positive because $f$ is not the identity. Writing $B_n$ for the number of breakpoints of $f^n$ in $I$, we obtain $\Sigma\cdot B_n\geq n|\log L-\log R|$, and so $B_n$ is bounded below by a nonconstant linear function in $n$.
\end{proof}

Lemma~\ref{lem:break-accum} implies that $b$ is not a coboundary:

\begin{lem}\label{lem:b-not-coboundary}
    Suppose $G\leq\PL_+(S^1)$ is nonabelian. Then the cocycle $b$ represents a nontrivial cohomology class in $H^1(G,W)$.
\end{lem}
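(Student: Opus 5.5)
Suppose for contradiction that $b$ is a coboundary. Then there is $w\in W$, a finitely supported $\Z^d$--valued function on $GP$, with $b(g)=g\cdot w-w$ for all $g\in G$. Unwinding the definitions, $Z_{g^{-1}}(x)=w(g^{-1}x)-w(x)$ for every $x\in GP$. Every breakpoint of $g^{-1}$ is a point of $GP$ at which $Z_{g^{-1}}$ is nonzero, since $\lambda$ is injective. So every breakpoint of $g^{-1}$ lies in $\supp(w)\cup g\,\supp(w)$. Replacing $g$ by $g^{-1}$, we get the key bound: every $h\in G$ has at most $2|\supp(w)|$ breakpoints, uniformly in $h$. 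The plan is to contradict this bound using Lemma~\ref{lem:break-accum} applied to a power $h=f^n$ of a suitable element of $G$.

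We therefore need an element $f\in G$ and an interval $I\subseteq S^1$ such that $f$ fixes the endpoints of $I$ and restricts nontrivially to $I$. Then Lemma~\ref{lem:break-accum} applied to $f|_I$ gives at least $Kn$ breakpoints of $f^n$ inside $I$, and for $n$ large this exceeds $2|\supp(w)|$. There is a subtlety: the breakpoints of $f^n|_I$ counted in Lemma~\ref{lem:break-accum} are interior breakpoints of the restriction, and these are genuine breakpoints of $f^n$ on $S^1$. That suffices.

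Finding $f$ splits by rotation number.
\begin{enumerate}
\item If some $g\in G$ has nonzero rational rotation number $p/q$ and $g^q\neq 1$, then $f=g^q$ has a fixed point and is nontrivial. By Lemma~\ref{lem:boundary-finite}, $\partial\Fix(f)$ is finite and nonempty, so a component of $S^1\setminus\Fix(f)$ gives the interval $I$; the same applies if $\rot(g)=0$ and $g\ne 1$.
\item If $g^q=1$, then $g$ is torsion, hence conjugate to a rotation.
\item If $g$ has irrational rotation number, Theorem~\ref{thm:herman} conjugates $g$ to an irrational rotation.
\end{enumerate}
So the remaining case is that no nontrivial element of $G$ has a fixed point. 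Then every nontrivial element acts freely on $S^1$, and by H\"older's theorem (free actions on the circle) $G$ is abelian, contradicting the hypothesis.

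A cleaner alternative avoids H\"older's theorem. Nonabelian $G$ contains $f=[g,h]\ne 1$, and $\rot([g,h])$ need not vanish, so this does not immediately give a fixed point. Instead I would use the uniform breakpoint bound directly: if all elements have at most $B$ breakpoints, then the powers $g^n$ of an element with irrational rotation number already violate it. For such $g$, $g^n$ has breakpoints at the images of the breakpoints of $g$ along orbits, unless these cancel. Cancellation requires the total jump $C_g$ along each orbit to vanish, and I expect this orbit bookkeeping, via $C_g$ and Lemma~\ref{properties jump}, to be the main obstacle. Hence I would prefer the fixed-point route through H\"older's theorem, citing~\cite{Navas2011}.
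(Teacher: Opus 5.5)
Your proof is correct and is essentially the paper's argument: H\"older's theorem supplies a nontrivial $f\in G$ with a fixed point, Lemma~\ref{lem:break-accum} makes the number of breakpoints of $f^n$ grow linearly, and this contradicts the uniform boundedness of a coboundary $g\mapsto g\cdot w-w$ (the paper bounds the $\ell^1$ norm by $2\|w\|$, you bound the support size by $2|\supp(w)|$, which amounts to the same thing). The rotation-number case split and the closing ``cleaner alternative'' paragraph are unnecessary: it suffices to note that if no nontrivial element had a fixed point, the action would be free and H\"older's theorem would make $G$ abelian.
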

\begin{proof}
    The proof is a standard adaptation of basic ideas from bounded cohomology.
    By H\"older's Theorem (see~\cite{Navas2011,KK21-book} for instance), there is a nontrivial element $f$ of $G$ which fixes a point in $S^1$, which can then be viewed as an element of $\PL_+(I)$ by cutting open along the fixed point.

    On the one hand, giving $\Z^d$ the standard $\ell^1$ norm and taking the corresponding induced norm on $W$, Lemma~\ref{lem:break-accum} implies that the norm of $b(f^n)$ goes to infinity as $n$ goes to infinity. Indeed, $Z_{f^{-n}}(x)$ takes on integral values, and by Lemma~\ref{lem:break-accum} is nonzero at at least $K\cdot n$ points.

    On the other hand,
    if $b$ were a coboundary then it could be expressed as $b(g)=g\cdot w-w$ for some $w\in W$.
    The norm of $g\cdot w-w$ is at most twice that of $w$, independently of $g$. Thus, $b$ is not a coboundary.
\end{proof}

Thus, Lemma~\ref{lem:coho-nontrivial} follows.
Since each of the subgroups $G_i\leq G$ are identified with stabilizers of points in $S^1$, the Brin--Squier Theorem~\ref{thm:brin-squier} shows that $G_i$ contains no nonabelian free subgroups. Therefore:

\begin{cor}\label{cor:permutation-cohomology}
    Let $G\leq\PL_+(S^1)$ be finitely generated and nonabelian. Then there is a subgroup $H\leq G$ such that:
    \begin{enumerate}
        \item $H$ contains no nonabelian free subgroups.
        \item We have $H^1(G,\R[G/H])\neq 0$.
    \end{enumerate}
\end{cor}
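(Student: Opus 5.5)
The plan is to take $H$ to be one of the point stabilizers $G_i$ produced by the permutation-cohomology construction, and to check the two conditions in turn. Fix a finite generating set of $G$ and let $P$ be the set of breakpoints of the generators and their inverses. As in the setup above, $GP$ splits into finitely many $G$--orbits with representatives $x_1,\ldots,x_m$ and stabilizers $G_1,\ldots,G_m$. This gives the decomposition $V\cong\bigoplus_{i=1}^m\R[G/G_i]$.

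\textbf{Cohomological condition.} Since $G$ is nonabelian, Lemma~\ref{lem:coho-nontrivial} shows that the jump cocycle $c$ is not a coboundary in $H^1(G,V)$. The coefficient module is a finite direct sum, so group cohomology splits accordingly:
\[
H^1(G,V)\cong\bigoplus_{i=1}^m H^1(G,\R[G/G_i]).
\]
Under this splitting, the class of $c$ corresponds to the tuple of classes of its components $c_i$. Hence some $c_i$ represents a nonzero class in $H^1(G,\R[G/G_i])$, and we set $H=G_i$, which gives condition (2). If one prefers the integral statement, Lemma~\ref{lem:b-not-coboundary} already suffices. The real class $c$ is the image of $b$ under $W\to V$, induced by the embedding $\lambda^{-1}\colon\Z^d\to\R$. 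Moreover, the norm-growth argument in the proof of Lemma~\ref{lem:b-not-coboundary} applies verbatim to $c$ with the $\ell^1$ norm on $V$. The reason is that the nonzero jumps of $f^{-n}$ are bounded below in absolute value: they lie in a finitely generated subgroup of $\R$ and arise from finitely many jump values of $f$ along orbits.

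\textbf{No nonabelian free subgroups.} The group $H=G_i$ fixes the point $x_i\in S^1$. Cutting $S^1$ open at $x_i$ identifies $S^1\setminus\{x_i\}$ with the interior of $I$, and every element of $H$ extends to an element of $\PL_+(I)$ fixing the endpoints. This gives an injective homomorphism $H\hookrightarrow\PL_+(I)$. By the Brin--Squier Theorem~\ref{thm:brin-squier}, $\PL(I)$ contains no nonabelian free subgroup, and therefore neither does $H$. This gives condition (1).

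\textbf{Main obstacle.} The only delicate point is the passage from the nontriviality of the total class to the nontriviality of a single summand. This rests on the additivity of $H^1(G,-)$ over finite direct sums, which holds because there are only finitely many orbits. That finiteness is exactly where finite generation of $G$ is used.
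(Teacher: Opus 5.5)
Your proof is correct and follows the paper's argument. You take $H=G_i$ for an orbit where the component $c_i$ of the jump cocycle is cohomologically nontrivial, using Lemma~\ref{lem:coho-nontrivial} and additivity of $H^1$ over the finite orbit decomposition; you then observe that $G_i$ is a point stabilizer, so it embeds in $\PL_+(I)$ and has no nonabelian free subgroups by Brin--Squier. Your direct norm-growth argument for the real cocycle $c$ is a useful addition, since the paper passes from the integral class $b$ to the real class $c$ without justification. That argument works because each nonzero jump of $f^{-n}$ is either a subset sum of finitely many jumps of $f^{-1}$ or $n$ times a jump at a fixed breakpoint, so these jumps are bounded away from zero.
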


\subsection{Coarse geometric consequences}
As promised at the beginning of this section, we develop a framework for applying Theorem~\ref{thm:dunwoody-swenson} to groups of piecewise linear homeomorphisms.
The general methods this section seem to be known already; see~\cite{Cornulier2013Commensurated,Sageev1995}. We include complete arguments for the convenience of the reader.

As we have mentioned already, the cohomological machinery we have developed up to this point suffices to show that finite volume hyperbolic $n$--manifold groups cannot act faithfully by piecewise linear homeomorphisms of the circle; a reader interested in only those consequences can safely skip this section.

We begin by establishing the following technical result about groups of piecewise linear homeomorphisms. As is standard, if $G\leq\Homeo(S^1)$ and $x\in S^1$, we write $G_x$ for the stabilizer of $x$ in $G$.

\begin{thm}\label{thm:rel-end}
    Let $G\leq\PL_+(S^1)$ be finitely generated and nonabelian. Then there exists a point $x\in S^1$ and a homomorphism \[\phi_x\colon G_x\longrightarrow \Z\] with kernel $K_x$ such that $e(G,K_x)>1$. Moreover, $x$ may be chosen to be a breakpoint of an element in a finite generating set for $G$.
\end{thm}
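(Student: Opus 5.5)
We will extract the relative ends from the nontrivial cocycle of Lemma~\ref{lem:b-not-coboundary}, converting a nonzero class in $H^1(G,\Z^d[G/G_i])$ into a commensurated, non-transfixed subset of the coset space $G/K_x$.

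First we locate a single orbit carrying the nontrivial class. Decompose $b=(b_1,\ldots,b_m)$ according to $W=\bigoplus_i \Z^d[G/G_i]$. Since $b$ is not a coboundary, some $b_i$ is not a coboundary, and we fix $x=x_i$. We choose the representatives $x_i$ to be breakpoints of generators, which gives the ``moreover'' clause. Projecting $\Z^d$ onto a coordinate, we may further assume $b_i$ takes values in $\Z[G/G_x]$ and is still not a coboundary: if every coordinate projection were a coboundary, then $b_i$ would be one.

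Next we define the homomorphism $\phi_x$. Evaluating the cocycle at the basepoint $x$ gives $g\mapsto b_i(g)(x)$. Restricted to $G_x$, this is additive, since $b(gh)(x)=b(g)(x)+b(h)(g^{-1}x)$ and $g^{-1}x=x$ for $g\in G_x$. It records the integral jump at $x$, i.e.\ the log-derivative ratio. In fact $\phi_x(g)$ is the corresponding coordinate of $\lambda(\log D_+g^{-1}(x)-\log D_-g^{-1}(x))$, which is a homomorphism on $G_x$ by the chain rule. Let $K_x=\ker\phi_x$.

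Now we build a commensurated set. Consider the $0$-cochain-like object obtained by ``integrating'' $b_i$. On $G/K_x$, define $A=\{gK_x : \text{a suitable sign/level condition on } b_i(g)(x)\text{ or on } \phi\text{-lifts}\}$. The more systematic route is the standard one: $H^1(G,\Z[G/G_x])\ne0$ with $G_x/K_x\cong\Z$ (or $0$). Pull back to $G/K_x$, on which $G_x$ acts through translation by $\Z$. Define $\tilde c\colon G/K_x\to\Z$ by letting $\tilde c(gK_x)$ be the value $b_i(g)$ assigns at the coset $gG_x$, shifted by the $\Z$-coordinate, and take $A$ to be a half-space $\{\tilde c\ge 0\}$ (equivalently, cut each $G_x$-fiber $\cong\Z$ in half in a way twisted by $b$). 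The cocycle identity then gives that $gA\,\triangle\,A$ is finite, because $b_i(g)$ is finitely supported and the jump values are bounded on each fiber. We then check that $A$ is not transfixed: if it were, the finite correction would yield $w\in\Z[G/G_x]$ with $b_i=\delta w$, contradicting nontriviality. By the standard equivalence (Sageev, Cornulier), a commensurated non-transfixed subset of $G/K_x$ gives $e(G,K_x)>1$. If $\phi_x=0$ we have $K_x=G_x$, and the argument is the classical fact that $H^1(G,\Z[G/H])\ne0$ implies $e(G,H)>1$ (via $H^1(G,\Z[G/H])$ versus $H^0$ of ends of the Schreier graph).

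The main obstacle is this third step: making precise the translation from the nontrivial class into an almost-invariant subset of $G/K_x$, rather than of $G/G_x$. The nontrivial class lives over $G/G_x$, where $e(G,G_x)$ could be $1$, and the $\Z$-quotient $G_x/K_x$ is what supplies the extra ends. I would try first the identification $\Z[G/G_x]\cong H^1_c$-type data of the $\Z$-covering Schreier graph $G/K_x\to G/G_x$, i.e.\ Shapiro's lemma: $H^1(G,\Z[G/G_x])$ relates to $H^1(G_x,\Z)$ via the twisting homomorphism $\phi_x$. I would also verify carefully that the unboundedness from Lemma~\ref{lem:break-accum} prevents $A$ from being transfixed.
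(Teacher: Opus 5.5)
You follow the paper's construction, but the step you flag as the main obstacle (non-transfixedness of $A$) is a genuine gap, and the two justifications you offer for it do not work.

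Your reduction to one orbit and one coordinate, the choice of $x$ as a breakpoint, the homomorphism $\phi_x(g)=b_i(g)(x)$, and the set $A=\{gK_x : b_i(g)(gx)\ge 0\}$ all agree with the paper. Your $A$ is the trace of the half-space $Y=X\times\Z_{\ge 0}$ on the orbit of $(x,0)$ under the twisted action $g\cdot(y,n)=(gy,n+b(g)(gy))$, whose stabilizers are conjugates of $K_x$.

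First justification. The equivalence ``$Y$ transfixed if and only if $b$ is a coboundary'' (Lemma~\ref{lem:transfix-coboundary}) concerns $Y$ inside all of $X\times\Z$. That set is in general a union of many $G$--orbits, so a finite correction on the single orbit $G\cdot(x,0)\cong G/K_x$ does not produce a global $w$. This genuinely fails when $\phi_x=0$. Then $K_x=G_x$, the orbits $G\cdot(x,n)$ for $n\in\Z$ are pairwise distinct, and $b_i(g)=\tilde c-g\cdot\tilde c$ for the function $\tilde c(gx)=b_i(g)(gx)$ on $X$. Nothing in your argument prevents $\tilde c$ from equalling $1$ on an infinite almost-invariant set and $2$ on its infinite complement. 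In that case $b_i$ is not a coboundary, yet $A=\{\tilde c\ge 0\}$ is all of $X$.

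Second justification. Your fallback, that $H^1(G,\Z[G/H])\neq 0$ implies $e(G,H)>1$, is false as stated (take $H=G=\Z$). The correct statement is the exact sequence
\[
0\to\{f\colon G/H\to\Z \text{ almost invariant}\}/(\Z+\Z[G/H])\to H^1(G,\Z[G/H])\to H^1(G,\Z^{G/H})\cong\operatorname{Hom}(H,\Z),
\]
in which $[b_i]$ maps to $\phi_x$. So it applies exactly when $\phi_x=0$, and only after extracting a set from the almost-invariant function. Lemma~\ref{lem:break-accum} plays no role at this stage; it only gives $[b]\neq 0$.

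The paper closes the gap globally, using all orbits at once:
\begin{enumerate}
\item The edge boundary of $Y$ in the union of Schreier graphs is finite, so only finitely many orbits meet both $Y$ and its complement.
\item If each such orbit met one side in a finite set, altering $Y$ at finitely many points would give an invariant set, contradicting Lemma~\ref{lem:transfix-coboundary}.
\item Hence some orbit $\mathcal O\cong G/K$ meets both sides infinitely. Deleting the finitely many boundary vertices leaves at least two infinite components, so $e(G,K)>1$.
\end{enumerate}

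Your case split can also be repaired directly.
\begin{itemize}
\item If $\phi_x\neq 0$, the orbit of $(x,0)$ contains $\{x\}\times\phi_x(G_x)$, so $A$ is infinite and co-infinite for free. $A$ is commensurated because on the fiber over $hx$ one has $\tilde c(huK_x)=\tilde c(hK_x)+\phi_x(u)$, which is injective on $G_x/K_x$; it is not because the values are bounded there.
\item If $\phi_x=0$, the function $\tilde c$ is constant on each of the finitely many components of the Schreier graph of $G/G_x$ minus a finite set, so it takes finitely many values. Since it is not constant off a finite set, some level set $\{\tilde c\ge m\}$ is infinite and co-infinite. You must choose this level $m$ (equivalently the orbit $G\cdot(x,-m)$) rather than fix $m=0$.
\end{itemize}
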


The proof of Theorem~\ref{thm:rel-end} will occupy the remainder of this section, and its conclusions feed into the hypotheses of Theorem~\ref{thm:dunwoody-swenson}.
We fix $G$ and a finite generating set for it, and retain the notation for the cocycle $b$ representing a nontrivial cohomology class in $H^1(G,W)$. The module $W$ itself decomposes as a finite direct sum of modules $W\cong\bigoplus_{i=0}^n W_i$, which are naturally in bijection with $G$--orbits in $GP$. This direct sum decomposition yields a direct sum decomposition of $H^1(G,W)\cong\bigoplus_{i=0}^n H^1(G,W_i)$. Since the base ring of $W$ is $\Z^d$, we may further decompose each $W_i$ according to the coordinates of $\Z^d$; thus, without loss of generality, we may assume $d=1$.

According to this direct sum decomposition, we may assume that the summand $b_0$ of the cocycle $b$ corresponding to $H^1(G,W_0)$ represents a nontrivial cohomology class, and that the corresponding orbit in $GP$ is $X$.
For compactness of notation, we will suppress the subscript and simply write $b$ for $b_0$. We build an action of $G$ on $X\times\Z$ by 
\[g\colon (x,n)\mapsto (g x,n+b(g)(g x)).\] 
The reader may check that because $b$ is a cocycle, this does in fact furnish an action by $G$.

We let $Y\subseteq X\times\Z$ be the positive half space, consisting of pairs where the second coordinate is nonnegative. It is an easy computation to show that $Y$ is commensurated by the action of $G$; see Section~\ref{ss:coarse} for definitions of commensurated and transfixed sets. This is simply because for any $g\in G$, the function $b(g)$ is finitely supported and hence $g\cdot (x,n)=(g\cdot x,n)$ for all but finitely many values of $x$.

\begin{lem}\label{lem:transfix-coboundary}
    The set $Y$ is transfixed if and only if $b$ is a coboundary.
\end{lem}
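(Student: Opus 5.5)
Both directions reduce to the sign conventions of the action $g\cdot(x,n)=(gx,\,n+b(g)(gx))$. Write $gY\,\triangle\,Y$ fibrewise over $X$: above the point $gx$, the fibre of $gY$ is $\{n\geq b(g)(gx)\}$. So $gY\triangle Y$ restricted to that fibre has exactly $|b(g)(gx)|$ elements, and
\[
|gY\triangle Y|=\|b(g)\|_1 .
\]
The plan is to encode a finite modification of $Y$ by an element $w\in W$, compute its translates, and compare with the coboundary condition.

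For the direction ``coboundary implies transfixed'', suppose $b(g)=g\cdot w-w$ for some $w\in W$, where $(g\cdot w)(y)=w(g^{-1}y)$. Define the shifted half-space
\[
Y_w=\{(x,n)\mid n\geq w(x)\}.
\]
Since $w$ is finitely supported, $Y_w\triangle Y$ is finite, with cardinality $\|w\|_1$. Now compute:
\[
gY_w=\{(gx,\,n+b(g)(gx))\mid n\geq w(x)\}=\{(y,m)\mid m\geq w(g^{-1}y)+b(g)(y)\}.
\]
The condition $b(g)(y)=w(y)-w(g^{-1}y)$ gives $gY_w=Y_w$. This choice of sign is the one consistent with the cocycle identity $b(gh)=b(g)+g\cdot b(h)$. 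If the convention comes out reversed, replace $w$ by $-w$, or use $Y_{-w}$; I will check this against the cocycle identity.

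For the converse, suppose $Y'$ is $G$-invariant and $Y\triangle Y'$ is finite. Each fibre $Y'\cap(\{x\}\times\Z)$ then differs from $\{n\geq 0\}$ in finitely many points, and equals it for all but finitely many $x$. Define
\[
w(x)=\bigl|Y'_x\cap\{n<0\}\bigr|-\bigl|\{n\geq 0\}\setminus Y'_x\bigr|,
\]
a finite ``index'' of the fibre relative to the standard half-line, so that $w\in W$. The set $Y'$ need not itself be a shifted half-line, so I will use a counting invariant rather than a minimum. The key observation is that $g$ maps the fibre over $x$ to the fibre over $gx$ by the translation $n\mapsto n+b(g)(gx)$. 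A translation by $t$ carries a set with index $k$ to a set with index $k-t$, where the index of $\{n\geq t\}$ is $-t$. Invariance of $Y'$ then gives
\[
w(gx)=w(x)-b(g)(gx),
\]
that is, $b(g)=w'-g\cdot w'$ for $w'=-w$ up to the sign convention. Hence $b$ is a coboundary.

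The main obstacle is the bookkeeping in this converse: checking that the index is additive under translation, that it is finite, and that it is finitely supported. The rest is the direct computation above, carried out with a consistent convention for the left action on $W$.
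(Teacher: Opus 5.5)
Your proposal is correct and follows the paper's proof: a shifted half-space for the forward direction, and for the converse the same fibrewise index $w(x)=|Y'_x\cap\{n<0\}|-|\{n\geq 0\}\setminus Y'_x|$ together with the identity $w(gx)=w(x)-b(g)(gx)$. On the sign you left open: with $(g\cdot w)(y)=w(g^{-1}y)$ and $b(g)=g\cdot w-w$, your computation shows the invariant set is $Y_{-w}=\{(x,n)\mid n\geq -w(x)\}$, which is exactly the paper's choice (the cocycle identity does not decide the sign, since $g\mapsto w-g\cdot w$ and $g\mapsto g\cdot w-w$ are both coboundaries).
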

\begin{proof}
    Suppose first that $b$ is a coboundary. That is, there exists a $w\in\Z^X$ that is finitely supported, and such that $b(g)=g\cdot w-w$. We construct an invariant set $Y_w$ which differs from $Y$ in a finite set. We set $Y_w$ to consist of pairs $(x,n)$ such that $n\geq -w(x)$. An easy calculation shows that $Y_w$ is invariant, and the set theoretic difference between $Y$ and $Y_w$ is finite because $w$ has finite support.

    Conversely, suppose that $Z\subseteq X\times\Z$ is an arbitrary $G$--invariant set that differs from $Y$ by a finite set. Write $Z_x:=\{n\mid (x,n)\in Z\}$ and let $w(x)=|Z_x\setminus \Z_{\geq 0}|-|\Z_{\geq 0}\setminus Z_x|$. It is straightforward to see that $w$ is finitely supported. Invariance of $Z$ and the definition of the action of $G$ shows that $Z_{g\cdot x}=Z_x+b(g)(g\cdot x)$. An easy calculation shows that $w(g\cdot x)=w(x)-b(g)(g\cdot x)$, and so $b(g)=g\cdot w-w$ is therefore a coboundary.
\end{proof}

\begin{proof}[Proof of Theorem~\ref{thm:rel-end}]
The argument we give here appears to be known, but we include it here for completeness.

    We consider the action of $G$ on $X\times\Z$. For each orbit $\mathcal O$ of $G$, we form the corresponding Schreier graph, which is just the coset graph (with respect to the fixed generating set $S$) of the stabilizer of an element in the corresponding orbit. We may thus identify $\mathcal O$ with (the underlying set of vertices of) a Schreier graph. We have that $X\times \Z$ decomposes as a disjoint union of these coset graphs.

    The \emph{edge boundary} of $Y$ consists of pairs $\{y,s\cdot y\}$ with $y\in Y$, $s\in S$, and such that exactly one element of the pair lies in $Y$. For a fixed $s\in S$, the set of such points $y$ is identified with the set theoretic difference of $Y$ and $s^{-1}Y$, and so the edge boundary of $Y$ is finite since $Y$ is commensurated by $G$ and $S$ is finite.

    If the orbit $\mathcal O$ of the $G$--action meets both $Y$ and its complement, then we say it is \emph{mixed}. If $\mathcal O$ is mixed then the corresponding Schreier graph $\Gamma_{\mathcal O}$ meets the edge boundary of $Y$. Since $G$--orbits are disjoint and the edge boundary is finite, it follows that there can only be finitely many mixed orbits.

    Now, we claim that at least one mixed orbit $\mathcal O$ must meet both $Y$ and its complement in an infinite set. Indeed, otherwise we can build a new invariant set $Y'$ which is invariant and differs from $Y$ by a finite set, violating Lemma~\ref{lem:transfix-coboundary}. To see this, we consider each orbit $\mathcal O$ and consider $Y\cap\mathcal O$. If this set is finite, we define $Y'\cap\mathcal O$ to be empty. If $\mathcal O\setminus Y$ is finite then we set $Y'\cap\mathcal O=\mathcal O$. For non-mixed orbits, we set $Y'\cap\mathcal O=Y\cap\mathcal O$. Observe that $Y'$ is a union of $G$--orbits and is therefore invariant. We have tampered with $Y$ on only finitely many mixed orbits, and switched membership of only finitely many points. Thus shows that $Y'$ has the desired properties.

    Thus, we may fix a mixed orbit $\mathcal O$ that meets both $Y$ and its complement in an infinite set. Let $E$ be the edge boundary of $Y$ restricted to $\mathcal O$, which is finite. Delete all of the finitely many vertices $V_0$ which are endpoints of edges in $E$. Since $\mathcal O$ is identified with a Schreier graph for $G$ and is locally finite with respect to $S$, we see that deleting $V_0$ breaks $\mathcal O$ into finitely many components. Since there is no edge from a $Y$--component to a non--$Y$--component, it follows that we obtain at least two infinite components in $\mathcal O\setminus V_0$. Thus, if $K$ is the stabilizer of a point in $\mathcal O$, we have $e(G,K)>1$.

    It suffices to characterize the stabilizer $K$. Let $(x,y)\in\mathcal O$. We have that $x$ is a breakpoint of an element of $S$. Thus, if $g\in K$ then $g\cdot x=x$, and so $g\in G_x$. Now, the definition of the action of $G$ on $X\times\Z$ forces any $g$ stabilizing $(x,n)$ to satisfy $n+b(g)(g\cdot x)=n$. Since $g\cdot x=x$, this is just $b(g)(x)=0$. The cocycle identity shows that $b$ restricts to a homomorphism on $G_x$, so that $\phi_x(g)$ is defined by $b_g(x)$ on $G_x$. This completes the proof.
\end{proof}

\section{Commutation obstructions}
In this section, we establish some commutation obstructions for groups to act by piecewise linear homeomorphisms, and which will be crucial for handling right-angled Artin groups.

\subsection{$F_2\times\Z$ subgroups}
By Lemma~\ref{lem:raag-power}, for the purposes of considering the non-existence of embedded right-angled Artin subgroups of $PL(S^1)$, we may restrict our attention to orientation preserving homeomorphisms.
In this section we prove the following fact:

\begin{lemma}\label{lem:f2-times-z}
    The group $F_2\times\Z$ cannot act faithfully on the circle by piecewise linear homeomorphisms.
\end{lemma}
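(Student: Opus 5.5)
Suppose, for contradiction, that $F_2\times\Z=\langle a,b\rangle\times\langle t\rangle$ acts faithfully on $S^1$ by piecewise linear homeomorphisms. By Lemma~\ref{lem:raag-power}, the subgroup generated by $a^2,b^2,t^2$ is again isomorphic to $F_2\times\Z$ and lies in $\PL_+(S^1)$. We may therefore assume that the action is orientation preserving. The proof then splits according to the rotation number of the central generator $t$.

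\emph{Case 1: $\rot(t)\notin\mathbb Q$.} By Theorem~\ref{thm:herman}, $t$ is conjugate in $\Homeo_+(S^1)$ to an irrational rotation $R_\alpha$. The centralizer of an irrational rotation in $\Homeo_+(S^1)$ is the abelian group $SO(2)$ of rotations. Hence the conjugated $\langle a,b\rangle$, which commutes with $t$, is abelian. This contradicts faithfulness of the action of $F_2$.

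\emph{Case 2: $\rot(t)=p/q\in\mathbb Q$.} Replacing $t$ by $t^q$ (again allowed by Lemma~\ref{lem:raag-power}), we may assume $t$ has a fixed point. Then $\Fix(t)$ is a nonempty proper closed subset of $S^1$, since $t\neq 1$. It is $\langle a,b\rangle$--invariant because $a$ and $b$ commute with $t$. By Lemma~\ref{lem:boundary-finite}, $\partial\Fix(t)$ is a nonempty finite set. It is also invariant under $\langle a,b\rangle$, since homeomorphisms preserving $\Fix(t)$ preserve its topological boundary. So a finite index subgroup $H\le\langle a,b\rangle$ fixes a point $x\in\partial\Fix(t)$. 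The subgroup $H$ is a nonabelian free group, being finite index in $F_2$. Cutting $S^1$ open at $x$ yields a faithful embedding of $H$ into $\PL_+(I)$. This contradicts Theorem~\ref{thm:brin-squier}, which says $\PL(I)$ contains no nonabelian free subgroup.

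The main point needing care is the invariance step in Case 2. One must check that the commuting elements genuinely permute the finite set $\partial\Fix(t)$, and that $\partial\Fix(t)\neq\emptyset$ because $\Fix(t)$ is neither empty nor all of $S^1$. Once this holds, the finite-index-subgroup trick reduces everything to the Brin--Squier theorem. Case 1 only uses Herman's theorem and the standard fact that the centralizer of an irrational rotation in $\Homeo_+(S^1)$ consists of rotations.
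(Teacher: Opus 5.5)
Your proof is correct and follows the paper's argument: you split on $\rot(t)$. In the irrational case you use Herman's theorem and the fact that an irrational rotation has abelian centralizer; in the rational case you use the $\langle a,b\rangle$-invariance of the finite set $\partial\Fix(t)$, pass to a finite-index free subgroup fixing a point, and apply Brin--Squier.
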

\begin{proof}
    Let $F_2\times\Z=\langle a,b\rangle\times\langle t\rangle$, and let $\phi\colon F_2\times\Z\longrightarrow \PL(S^1)$ be a homomorphism. Write $\{A,B,T\}$ for the respective images of $\{a,b,t\}$ under $\phi$. Since we wish to prove that $\phi$ is not faithful, we may assume that $T$ has infinite order. We may then compute the rotation number $\rot(T)$, which is either rational or irrational.

    {\bf Rational rotation number case.} If $\rot(T)$ is rational then we may pass to a power of $T$ and assume that $T$ has a nonempty set $\Fix(T)$ of fixed points. The set $\partial\Fix(T)$ is then a nonempty finite subset of $S^1$ by Lemma~\ref{lem:boundary-finite}, and $\langle A,B\rangle$ acts on this set. Passing to a finite index subgroup of $F_2=\langle a,b\rangle$ if necessary, we may assume that $\langle A,B\rangle$ fixes $\partial\Fix(T)$ pointwise. We then obtain a faithful piecewise linear action of $\langle A,B\rangle$ on the complement of any one of these fixed points, and in particular on an interval. The Brin--Squier Theorem~\ref{thm:brin-squier} shows that no nonabelian free group can act faithfully on the interval by piecewise linear homeomorphisms, so that $\langle A,B\rangle$ cannot contain a nonabelian free group of $\phi$ is not faithful.

    {\bf Irrational rotation number.} By Herman's piecewise linear Denjoy's Theorem, we have that $T$ is topologically conjugate to an irrational rotation. The centralizer of an irrational rotation in $\Homeo_+(S^1)$ is abelian, and in particular cannot contain a nonabelian free group. In particular, $\phi$ cannot be faithful.
\end{proof}

\subsection{$\Z^2*\Z$ subgroups}

This section proves the following result:

\begin{lemma}\label{lem:z2-star-z}
    The group $\Z^2*\Z$ cannot act faithfully by piecewise linear homeomorphisms on the circle.
\end{lemma}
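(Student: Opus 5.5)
The plan is to argue by contradiction, using the fact that in $\Z^2*\Z=\langle a,b\rangle*\langle c\rangle$ the centralizer of any nontrivial element of $\langle a,b\rangle$ is exactly $\langle a,b\rangle$. Suppose $\phi$ is faithful, and write $A,B,C$ for the images of $a,b,c$. By Lemma~\ref{lem:raag-power} we may replace $a,b,c$ by squares, so $G=\langle A,B,C\rangle\le\PL_+(S^1)$. It then suffices to produce a nontrivial $E\in\langle A,B\rangle$ and an element $W\in G\setminus\phi(\langle a,b\rangle)$ with $[E,W]=1$. The rotation number restricted to the abelian group $\langle A,B\rangle$ is a homomorphism $\langle A,B\rangle\to\R/\Z$, and this gives two cases.

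\textbf{Rotation number not injective on $\langle A,B\rangle$.} Then some nontrivial $E_0\in\langle A,B\rangle$ has rational rotation number. After passing to a power, $E_0$ has a fixed point, so by Lemma~\ref{lem:boundary-finite} the set $\partial\Fix(E_0)$ is finite and nonempty.
\begin{enumerate}
\item The whole of $\langle A,B\rangle$ permutes $\partial\Fix(E_0)$. Passing to powers, we may assume $\langle A,B\rangle$ fixes it pointwise and preserves each component $J$ of $\supp(E_0)$.
\item On each $\overline J$, the group $\langle A,B\rangle|_{\overline J}$ is an abelian subgroup of $\PL_+(\overline J)$ containing the fixed point free map $E_0|_J$. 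By Matucci's theorem (as used in Proposition~\ref{prop:cyclic-trivial}), this restricted group is cyclic.
\item Since $\langle A,B\rangle\cong\Z^2$ and there are only finitely many components $J$, we can choose a nontrivial $E\in\langle A,B\rangle$ that is the identity on at least one component $J_0$ of $\supp(E_0)$. Iterating this finitely many times, we may assume $\Fix(E)$ has nonempty interior $U$ while $E\neq 1$.
\item We then look for a conjugate $W=gE'g^{-1}$, with $g\in G$ and $E'\in\langle A,B\rangle$, whose support lies in $U$. Such a $W$ commutes with $E$, since the supports are disjoint.
\item We need $W\notin\phi(\langle a,b\rangle)$. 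This holds if $g\notin\langle a,b\rangle$ is chosen generically in the free product, because $g e' g^{-1}\notin\langle a,b\rangle$ for reduced words $g$ beginning with a power of $c$.
\end{enumerate}
To make the support of $gE'g^{-1}$ land in $U$, we will use a shrinking argument. The supports of elements of $\langle A,B\rangle$ vanishing on $J_0$ are unions of intervals, and we try to find $g$ (built from $C$ and elements of $\langle A,B\rangle$) mapping the support of some $E'$ into $U$. If no such $g$ exists, the orbit structure of $G$ is constrained: some point of $\partial U$ would have a large stabilizer. In that situation we aim to extract a nonabelian free subgroup, such as $\langle E,CEC^{-1}\rangle$ or a variant, fixing a point, which contradicts Theorem~\ref{thm:brin-squier}.

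\textbf{Rotation number injective on $\langle A,B\rangle$.} Then every nontrivial element of $\langle A,B\rangle$ has irrational rotation number. By Theorem~\ref{thm:herman}, $A$ is topologically conjugate to an irrational rotation. The centralizer of that rotation consists of rotations, so $\langle A,B\rangle$ is simultaneously conjugate to a group of rotations whose rotation numbers are dense. Hence there are nontrivial elements $E_n\in\langle A,B\rangle$ that converge to the identity in $\Homeo_+(S^1)$. We would then play this off against the piecewise linear structure: $C E_n C^{-1}E_n^{-1}$ is a nontrivial element of $G$ for every $n$, since it is nontrivial in the free product. Its breakpoints, however, are confined to $P\cup E_n^{-1}P\cup C^{-1}P\cup\cdots$, and the jumps at these points come from the fixed finite jump data of $A,B,C$. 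Tracking the jump cocycle $b$ should show that for suitable $n$ these jumps cancel and the commutator is the identity, or is forced to commute with $E_n$, again contradicting the free product structure.

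\textbf{Main obstacle.} Both cases end with a step I am unsure of. In the rational case, it is the shrinking step: constructing a conjugate of an element of $\langle A,B\rangle$ supported inside the open set $U\subseteq\Fix(E)$, or else extracting the free subgroup fixing a point. In the irrational case, it is making the jump-cocycle cancellation precise. For the irrational case I would first try to show that $\langle A,B\rangle$ is in fact PL-conjugate to a group of rotations, using total jumps $C_f(x)$ along orbits and Lemma~\ref{properties jump}. That would reduce the problem to $\Z^2$ acting by rotations together with a single PL map $C$.
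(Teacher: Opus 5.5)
There is a genuine gap in the irrational (free) case, and the mechanism you sketch there would not close it. First, your assertion that the breakpoints of $CE_nC^{-1}E_n^{-1}$ are confined to a fixed finite configuration is exactly what needs proof, since $E_n$ is a long word in $A,B$.

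The paper obtains this from the jump potential $\tau=C^w_f$ of Lemma~\ref{lem: Jump potential}. Freeness of the action forces $C_f\equiv 0$, so $\tau$ is finitely supported and $J_h=\tau-\tau\circ h$ for every $h\in\langle A,B\rangle$. Hence $BP(h)\subset\supp\tau\cup h^{-1}(\supp\tau)$.

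Second, even with this control (or with $\langle A,B\rangle$ PL-conjugate to rotations), a single commutator is not localized. On an interval where $C$ is affine of slope $\lambda\neq 1$ and $E_n$ is a rotation by $\theta$, the map $CE_nC^{-1}E_n^{-1}$ is translation by $(\lambda-1)\theta\neq 0$. So no cancellation of jumps makes it trivial.

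The missing idea is a double commutator. Fix in advance $r\in\langle A,B\rangle$ and a neighborhood $U$ of $F=\supp\tau\cup C(\supp\tau)\cup C(BP(C))$ with $r(U)\cap U=\emptyset$; this is possible by freeness. Then take $f,g\in\langle A,B\rangle$ so close to the identity that all breakpoints of $f^{\pm1}$ and $Cg^{\pm1}C^{-1}$ lie in a small neighborhood $V\subset U$ of $F$. For $x\notin U$, every letter of $w=[[f,CgC^{-1}],[f^{-1},Cg^{-1}C^{-1}]]$ is affine along the whole evaluation. Since the affine group is metabelian, $\supp w\subset U$ (Lemma~\ref{lem: word}).

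Your unifying strategy cannot replace this last step. Producing $W$ that commutes with some nontrivial $E\in\langle A,B\rangle$ via disjoint supports is impossible here, because every nontrivial element of $\langle A,B\rangle$ acts freely and so has full support. Instead one uses the displacing element $r$: either $w=1$ or $[w,rwr^{-1}]=1$. Both corresponding words are nontrivial in $\Z^2*\Z$. For the second, the preimage of $w$ is a nontrivial element of the commutator subgroup, so it is not conjugate into a factor. Its centralizer is therefore cyclic and contains no nontrivial elliptic element (Proposition~\ref{prop: no finite orbit}).

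In the rational case, your shrinking step (finding a conjugate $gE'g^{-1}$ supported in $U\subseteq\Fix(E)$) is not established. It is also unnecessary, because you already hold the ingredients of the paper's short argument (Lemma~\ref{lem:z2z-periodic-orbit}).

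After your step 1, $\langle A,B\rangle$ fixes a point $p\in\partial\Fix(E_0)$. Your step 2 shows that every $q\in S^1$ is fixed by some nontrivial element of $\langle A,B\rangle$: $E_0$ itself if $q\in\Fix(E_0)$, or any element of the nontrivial kernel of the restriction to $\overline J$ if $q\in J$. This is Lemma~\ref{lem:all-points-fixed}.

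Now split on $C(p)$. If $C(p)=p$, then $\langle A,C\rangle\cong F_2$ fixes $p$. Otherwise choose $1\neq g\in\langle A,B\rangle$ fixing $C(p)$; then $g$ and $C^{-1}gC$ both fix $p$ and generate $F_2$. Either way Theorem~\ref{thm:brin-squier} is violated. Your suggested $\langle E,CEC^{-1}\rangle$ need not have a common fixed point; the element must be chosen to fix $C(p)$.
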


As in the case of $F_2\times \Z$, we may restrict our attention to orientation preserving homeomorphisms.

Recall an action of a group $G$ on a set is called \emph{free} if no nontrivial element of $G$ fixes a point. We proceed with the proof by separately considering the free and the non-free cases. 

We set $G:=\mathbb{Z}^2\ast \mathbb{Z}$.

\subsection{Free--periodic dichotomy}

We first prove the following useful lemma, which (among other things) shows that an action of $\Z^2$ by piecewise linear homeomorphisms of the circle either acts \emph{freely}, i.e.~every nontrivial element has no fixed point, or admits a finite orbit:

\begin{lemma}\label{lem:free-finite}
    Suppose $\Z^2\cong\langle a, b\rangle\leq \PL_+(S^1)$ and suppose that $\langle a, b\rangle$ acts on $S^1$ without a finite orbit. Then $\langle a, b\rangle$ acts freely (i.e.~no nontrivial element has a fixed point) and contains a sequence of nontrivial elements $\{g_i\}_{i\in\N}\subset \langle a, b\rangle$ converging uniformly to the identity in $\Homeo_+(S^1)$. 
\end{lemma}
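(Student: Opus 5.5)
We split according to whether some nontrivial element of $\langle a,b\rangle$ has a fixed point or a periodic orbit, and use rotation numbers together with Herman's Theorem~\ref{thm:herman}.

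\textbf{Step 1: every nontrivial element has irrational rotation number.} Suppose some nontrivial $g\in\langle a,b\rangle$ has rational rotation number. Replacing $g$ by a power, we may assume $\Fix(g)\neq\emptyset$; note that $g^k\neq 1$ since $\PL_+(S^1)$ elements of infinite order stay nontrivial, and if $g$ had finite order then $\rot(g)$ rational and $g^k=1$ is handled by choosing instead a power with nonempty fixed set and nontrivial action. If $g$ has finite order, the finite cyclic group $\langle g\rangle$ is conjugate to a group of rotations; since $\langle a,b\rangle\cong\Z^2$ is torsion-free this case does not occur. So $g$ is nontrivial with $\Fix(g)\neq\emptyset$ and $\Fix(g)\neq S^1$. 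Then $\partial\Fix(g)$ is a nonempty finite set by Lemma~\ref{lem:boundary-finite}. It is invariant under the centralizer of $g$, which contains all of $\langle a,b\rangle$ because the group is abelian. This gives a finite orbit, contradicting the hypothesis. Hence every nontrivial element has irrational rotation number, and in particular no fixed point, so the action is free.

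\textbf{Step 2: producing elements near the identity.} Apply Herman's Theorem~\ref{thm:herman} to $a$: there is $h\in\Homeo_+(S^1)$ with $hah^{-1}=R_\alpha$, where $\alpha\notin\mathbb Q$. The centralizer of an irrational rotation in $\Homeo_+(S^1)$ is the rotation group $SO(2)$; this is standard, since a commuting map commutes with the closure of $\{R_{n\alpha}\}$, which is all of $SO(2)$. Thus $h\langle a,b\rangle h^{-1}$ is a group of rotations containing the irrational rotation $R_\alpha$. The rotations $R_{n\alpha}$ accumulate at the identity, so we can choose $n_i\to\infty$ with $n_i\alpha\to 0$ in $\R/\Z$. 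Then $a^{n_i}=h^{-1}R_{n_i\alpha}h$ converges uniformly to the identity, by uniform continuity of $h$ and $h^{-1}$ on the compact circle. Each $a^{n_i}$ is nontrivial, so $g_i=a^{n_i}$ is the required sequence.

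\textbf{Main obstacle.} The only delicate point is the rational case in Step 1, where we must make sure that the element with a fixed point is nontrivial. This is handled by torsion-freeness of $\Z^2$ and the fact that the group's abelianness makes $\partial\Fix(g)$ invariant. Everything else is standard Denjoy–Herman theory.
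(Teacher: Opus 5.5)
Your proof is correct and follows the paper's route: the contrapositive via the finite, nonempty, $\langle a,b\rangle$--invariant set $\partial\Fix(g)$, followed by Herman's Theorem~\ref{thm:herman} to conjugate to an irrational rotation and extract elements close to the identity. Your Step~2 spells out the paper's ``standard argument'' explicitly. Taking $g_i=a^{n_i}$ with $n_i\alpha\to 0$ needs only the uniform continuity of $h^{-1}$, so the centralizer remark is superfluous, and the muddled aside about finite order in Step~1 can be replaced by a one-line appeal to torsion-freeness of $\Z^2$.
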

\begin{proof}
    For the first statement, we prove the contrapositive. Suppose that $1\neq g\in \langle a, b\rangle$ has a fixed point; any nontrivial element with a periodic orbit can be raised to a power in order to obtain an element with a fixed point. By Lemma~\ref{lem:boundary-finite}, $\partial\Fix(g)$ is a finite set of points that is left invariant by $b$. It follows that some power of $b$, and therefore some finite index subgroup of $\langle a, b\rangle$, acts on a finite set and therefore has a finite orbit.

    So, it follows that every nontrivial element of $\langle a, b\rangle$ has no fixed points in $S^1$ and therefore has irrational rotation number. By Herman's Theorem~\ref{thm:herman}, we have that each nontrivial $g\in \langle a, b\rangle$ is conjugate to an irrational rotation $R$, by some conjugating homeomorphism $h\in\Homeo(S^1)$ (which \emph{a priori} depends on $g$). Then, $g^h=h^{-1}gh$ is a rotation of the circle by an irrational angle, and so $g^h$ generates a dense group of rotations of the circle. The centralizer of the group of rotations of the circle is itself again, and so $h$ in fact conjugates $\langle a, b\rangle$ to a group of rotations. It is a standard argument now to conclude that there is a sequence of nontrivial elements of $\langle a, b\rangle$ converging uniformly to the identity.
\end{proof}

The case where $\langle a,b\rangle\leq \PL_+(S^1)$ has a finite orbit in $S^1$ is particularly constrained.

\begin{lemma}\label{lem:all-points-fixed}
    Suppose $\Z^2\cong\langle a,b\rangle\leq \PL_+(S^1)$ fixes a point $p\in S^1$. Then for all $q\in S^1$, there is a nontrivial $1\neq g\in\langle a,b\rangle$ such that $g(q)=q$.
\end{lemma}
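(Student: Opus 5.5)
The strategy is to cut the circle open at $p$, so that $\langle a,b\rangle$ becomes a subgroup of $\PL_+(I)$ isomorphic to $\Z^2$, and to use the fact (Matucci's theorem, already invoked in the proof of Proposition~\ref{prop:cyclic-trivial}) that the centralizer of a fixed-point-free piecewise linear homeomorphism of an interval is cyclic. The rough idea is that near $q$ the group cannot act faithfully without fixed points. So some nontrivial element acts trivially near $q$, or at least fixes $q$.

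Fix $q\in S^1$. If $q=p$ there is nothing to prove, so assume $q$ is an interior point of $I$. Write $G=\langle a,b\rangle$. If $q\in\Fix(G)$ we are done. Otherwise let $U=(u,v)$ be the component of $I\setminus \Fix(G)$ containing $q$; its endpoints are fixed by all of $G$. There are two cases.

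\textbf{Case 1: every nontrivial $g\in G$ acts on $U$ without fixed points.} Pick $g\neq 1$ whose restriction $g|_U$ is nontrivial. Then $g|_U$ is a fixed-point-free element of $\PL_+(\overline U)$, and all of $G|_U$ centralizes it. By Matucci's theorem $G|_U$ is cyclic. Since $G\cong\Z^2$, the restriction map $G\to G|_U$ has nontrivial kernel. Any nontrivial element of that kernel is the identity on $U$, so it fixes $q$.

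\textbf{Case 2: some nontrivial $g$ has a fixed point in $U$.} If $g(q)=q$ we are done. Otherwise, by Lemma~\ref{lem:boundary-finite} the set $\partial\Fix(g)\cap U$ is finite and nonempty. It is also invariant under $G$, since $G$ commutes with $g$ and fixes $u,v$. Hence a finite-index subgroup $H\le G$, still isomorphic to $\Z^2$ by Lemma~\ref{lem:raag-power}, fixes some point $r\in U$. We then repeat the argument with $H$ in place of $G$ and with the smaller component $U'\subsetneq U$ of $U\setminus\Fix(H)$ containing $q$. An element of $H$ fixing $q$ is in particular an element of $G$ fixing $q$.

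The main obstacle is making sure this descent terminates in Case 1 or in an element fixing $q$. My first attempt would be to note that each new endpoint $r$ lies in $\partial\Fix(g)$ for an element $g$ whose derivative jumps there. Such points are breakpoints of elements of $G$, and lie in the finite union of orbits $GP$ (notation as in the previous section). So the new endpoints are constrained to finitely many $G$-orbits.

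An alternative route that avoids the descent is to work directly with germs at the endpoints. Consider the homomorphism $G\to\R$, $g\mapsto\log D_+g(u)$. If it is not injective, its kernel contains a nontrivial element that is the identity on a right neighbourhood of $u$. One then wants to push such identity-germ regions out to $q$, for example by conjugating with an element of $G$ that moves points of $U$ towards $v$. If it is injective, every nontrivial element has nonzero log-derivative at $u$, hence no fixed points close to $u$. This should allow Matucci's theorem to be applied on a subinterval adjacent to $u$, producing the required kernel element. I expect this germ analysis to be where the real work lies.
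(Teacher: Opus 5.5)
\textbf{Gap: Case~2 is never closed.} Your overall setup matches the paper's: take the component $U=(u,v)$ of the complement of $\Fix(G)$ containing $q$, apply Matucci when the elements acting nontrivially on $U$ are fixed-point-free there, and extract a nontrivial element of $\ker(G\to G|_U)$. But as written the proof is incomplete. You send Case~2 into a descent and flag its termination as the ``main obstacle'' without establishing it. Your suggested mechanism does not settle it: knowing that the new endpoints lie in finitely many $G$-orbits does not stop infinitely many of them from accumulating inside $U$. The germ-based alternative is only a sketch.

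\textbf{The missing observation.} Case~2 (an element acting nontrivially on $U$ but with a fixed point there) is simply impossible, so no descent is needed.
\begin{itemize}
\item Since $u,v\in\Fix(G)$ and $G\leq\PL_+$, every element of $G$ restricts to an order-preserving homeomorphism of $U$.
\item An order-preserving bijection of an interval that leaves a finite set invariant fixes that set pointwise.
\item The finite nonempty set $\partial\Fix(g)\cap U$ is $G$-invariant because $G$ is abelian.
\item Hence it consists of points fixed by all of $G$, not merely by a finite-index subgroup. This contradicts $U\cap\Fix(G)=\emptyset$.
\end{itemize}
Your finite-index version already gives the same contradiction. If $H$ has finite index and fixes $r\in U$, then $r$ is a periodic point of every $g\in G$, and an orientation-preserving homeomorphism of an interval fixes its periodic points. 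So every element acting nontrivially on $U$ is fixed-point-free on $U$, and your Case~1 argument finishes the proof; this is exactly the paper's argument.

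\textbf{A wording fix in Case~1.} It should read ``every $g$ acting nontrivially on $U$ is fixed-point-free on $U$.'' As literally stated (``every nontrivial $g\in G$''), Case~1 would force $G\to G|_U$ to be injective, contradicting the kernel element you then produce.
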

\begin{proof}
    Let $F\subseteq S^1$ be the set of common fixed points of $H=\langle a,b\rangle$, which by assumption is nonempty. The set $F$ is a finite union of isolated points and closed intervals; see the proof of Lemma~\ref{lem:boundary-finite}. Since $H$ is nontrivial, there is at least one open interval $I$ in $S^1\setminus F$. We claim that the action of $H$ on $I$ factors through a cyclic quotient, which suffices to establish the lemma.

    Let $1\neq h\in H$ act nontrivially on $I$. Suppose that $h$ has a fixed point in $I$. Since $\Fix(h)$ is a finite union of intervals and isolated points, the boundary $\partial_I\Fix(h)$ of the fixed point set of $h$ in $I$ is finite and nonempty, by Lemma~\ref{lem:boundary-finite}. Since $H$ is abelian, $\partial_I\Fix(h)$ is $H$--invariant. Since $H$ is orientation--preserving, $H$ must fix $\partial_I\Fix(h)$ pointwise. This furnishes a global $H$--fixed point in $I$, contradicting the definition of $I$.

    Thus, $h$ acts without fixed points on $I$. Matucci~\cite{Matucci-2010} proves that the centralizer of a fixed point free piecewise linear homeomorphism of the interval is cyclic, whence the restriction of $H$ to $I$ is cyclic. This establishes the lemma.
\end{proof}

\subsection{Case 1: free $\Z^2$ action}\label{sec:free}


This section is devoted to the proof of following proposition.
\begin{prop}\label{prop: no finite orbit} Let $A\le \PL_+(S^1)$ be isomorphic to $\Z^2$, and assume that the action of $A$ on $S^1$ is free. Then, for every $c\in \PL_+(S^1)$, the group homomorphism given by 
\begin{equation}\label{eq:morphism A ast t}
 A \ast \langle t\rangle\to \PL_+(S^1),\quad h\mapsto h,\quad t\mapsto c,  
\end{equation}
is not injective.
\end{prop}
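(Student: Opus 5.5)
The plan is to produce two elements of $A\ast\langle t\rangle$ that are free in the abstract group but, under the homomorphism \eqref{eq:morphism A ast t}, fix a common point of $S^1$. Cutting the circle open at that point then gives a faithful piecewise linear action of $F_2$ on an interval, which the Brin--Squier Theorem~\ref{thm:brin-squier} forbids.

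First I apply Lemma~\ref{lem:free-finite}, since a free action has no finite orbit. It gives a homeomorphism $h$ conjugating $A$ into the rotation group $SO(2)$. Because $A\cong\Z^2$ contains an irrational rotation, its image under this conjugation is a dense subgroup of $SO(2)$. We may assume $c\notin A$ and that $c$ does not normalize $A$; otherwise a nontrivial relation such as a commutator $[a,cac^{-1}]$ is already visible.

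Next I find a nontrivial word with a fixed point. Consider the family $w_\theta=c\,g\,c^{-1}\,R_\theta$, where $g\in A$ is a fixed nontrivial element and $R_\theta$ denotes an element of $A$ that acts as the rotation by $\theta$ after conjugation by $h$. The map $\theta\mapsto\rot(h^{-1}cgc^{-1}h\,R_\theta)$ is continuous, nondecreasing and of degree one, so it takes the value $0$ on a nonempty closed set. I will argue that this set has nonempty interior. If the interval degenerated, then $cgc^{-1}$ would be conjugate to a rotation by the same $h$, so it would commute with all of $A$. That would give a relation in $A\ast\langle t\rangle$, so in that case we are done. Given nonempty interior, density of $A$ in the rotations lets me pick $a_1\in A$ with $\rot(cgc^{-1}a_1)=0$. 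Then $w_1=tgt^{-1}a_1$ is a reduced, hence nontrivial, word whose image has a fixed point. By Lemma~\ref{lem:boundary-finite}, $\partial\Fix(w_1)$ is a finite nonempty set, and I fix a point $p$ in it.

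Then I build a second element fixing $p$, with a different syllable structure, say $w_2=t^2g t^{-2}a_2$ or $w_2=k\,w_1\,k^{-1}$ with $k=tg't^{-1}a'$. The aim is to tune the $A$-letter by density and the positive-length rotation-number plateau so that the image fixes $p$ exactly. Once $w_1,w_2$ both fix $p$ and generate a free subgroup of $A\ast\langle t\rangle$ (checked by normal forms), the image of $\langle w_1,w_2\rangle$ acts on $S^1\setminus\{p\}\cong(0,1)$. It extends to an action on the compact interval. Brin--Squier then says $\langle w_1,w_2\rangle$ is not free in $\PL_+(S^1)$, so the map is not injective.

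The main obstacle is this last step: hitting the \emph{same} point $p$ exactly rather than approximately. Density of $A$ only gives approximate control, so I would first try to exploit that fixed sets of PL maps near a plateau of the rotation number contain intervals. On such an interval one can adjust by an element of $A$ while keeping a fixed point in a prescribed small arc. Failing that, I would use the finiteness of $\partial\Fix$ together with the $A$-action, as in Lemma~\ref{lem:all-points-fixed}, to choose $k$ with $k(p)\in\partial\Fix(w_1)$, so that $w_1$ and $k^{-1}w_1k$ both fix $p$.
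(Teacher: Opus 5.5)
There is a genuine gap, and it is the step you flag yourself. Your preliminary steps are sound: if $c$ normalizes $A$ you get a relation, and the plateau argument correctly gives either a relation ($cgc^{-1}$ commutes with $A$) or some $a_1\in A$ for which $cgc^{-1}a_1$ has a fixed point $p$.

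The trouble is that the second element is not a technical refinement; it carries the whole content of the proposition. Suppose the map were injective, so $\Gamma=\langle A,c\rangle\cong A\ast\Z$. The stabilizer $\Gamma_p$ embeds in $\PL_+(I)$, so by Brin--Squier and the Kurosh subgroup theorem it is cyclic. (Nontrivial conjugates of subgroups of $A$ act freely, so they cannot fix $p$.) Producing your $w_2$ is therefore as hard as the proposition, and your tools cannot do it. Rotation numbers and density of $h^{-1}Ah$ only decide whether \emph{some} fixed point exists, never where it is.

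\begin{itemize}
\item For $w_2=t^2gt^{-2}a_2$, the requirement $w_2(p)=p$ means $a_2(p)=c^2g^{-1}c^{-2}(p)$. So this particular point must lie in the countable orbit $A\cdot p$, and there is no reason it should.
\item The claim that fixed sets near a plateau contain intervals is false. A PL map can have isolated sign-changing fixed points, and these persist under every $C^0$-small perturbation.
\item The conjugation fallback needs $k\in\Gamma$ with $k(p)$ in the finite set $\partial\Fix(w_1)$. Taking $k(p)=p$ is circular, and nothing forces the $\Gamma$--orbit of $p$ to meet another point of that set.
\item Lemma~\ref{lem:all-points-fixed} is unavailable: it needs a global fixed point for $A$, the opposite of the free case.
\end{itemize}

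What is missing is a way to turn approximate control into an exact relation. The paper gets it from breakpoints rather than fixed points.

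\begin{itemize}
\item Lemma~\ref{lem: Jump potential} shows that for a free $\Z^2$ action the forward total jump $\tau=C^w_f$ has finite support $E$. It also satisfies $J_h(x)=\tau(x)-\tau(h(x))$ for all $h\in A$, so $BP(h)\subset E\cup h^{-1}(E)$.
\item Choose $f,g\in A$ uniformly close to the identity, using Lemma~\ref{lem:free-finite}. Then all breakpoints of $f^{\pm1}$ and $cg^{\pm1}c^{-1}$ lie in a small neighborhood $V$ of the finite set $F=E\cup c(E)\cup c(BP(c))$.
\item Off $V$, every intermediate map in evaluating $w(f,cgc^{-1})=[[f,cgc^{-1}],[f^{-1},cg^{-1}c^{-1}]]$ is affine. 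Since the affine group is metabelian, $w(f,cgc^{-1})$ is supported in a neighborhood $U$ of $F$ (Lemma~\ref{lem: word}).
\item If $w(f,cgc^{-1})=1$, that is already a relation. Otherwise pick $r\in A$ with $r(U)\cap U=\emptyset$. Then $w(f,cgc^{-1})$ and $r\,w(f,cgc^{-1})\,r^{-1}$ commute because their supports are disjoint.
\item A Bass--Serre argument shows the corresponding elements of $A\ast\langle t\rangle$ do not commute.
\end{itemize}

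Only uniform closeness to the identity is used here, which is exactly what density gives you. Exact hitting of a point is never needed.
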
 

Before proceeding, the reader may recall the notion of the jump, total jump, and forward total jump from Section~\ref{ss:cohomology}.

The next lemma is one of the key ingredients in the proof of Proposition~\ref{prop: no finite orbit}.

\begin{lem}\label{lem: Jump potential}
    Assume that $A=\langle f,g\rangle \cong \mathbb{Z}^2$ acts freely on $S^1$ by piecewise-linear homeomorphisms. Define $\tau:S^1\rightarrow \mathbb{R}$ by
    \begin{align*}
        \tau(x):=C^w_f(x)
    \end{align*}
    for every $x\in S^1$. Then, $\tau$ has finite support and for every $h\in A$
\begin{align}\label{cocycle relation}
    J_h(x)=\tau(x)-\tau(h(x)).
\end{align}
Consequently, we have that $BP(h)\subset \supp(\tau) \cup h^{-1}(\supp(\tau))$ for each $h\in A$.
\end{lem}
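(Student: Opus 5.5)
Since $A$ acts freely, $f$ has no fixed points and no periodic points, so each $f$-orbit meets each of the finitely many breakpoints of $f$ at most once. Hence $\tau=C^w_f$ is well defined. Moreover $\tau(x)\neq 0$ forces the forward $f$-orbit of $x$ to contain a breakpoint of $f$. The plan is to show that $\tau$ is nonzero only at finitely many points, and then to derive \eqref{cocycle relation} first for $h=f$, then for $h=g$, and finally for all of $A$ via the cocycle identity of Lemma~\ref{properties jump}.

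\textbf{Step 1: the case $h=f$.} This is the telescoping identity
\[\tau(x)-\tau(f(x))=\sum_{k\ge 0}J_f(f^k x)-\sum_{k\ge1}J_f(f^kx)=J_f(x).\]

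\textbf{Step 2: the total jump vanishes and $\tau$ has finite support.} First I claim that $C_f(x)=0$ for every $x$. Fix $x$ and let $q\to\infty$. Each $f^q$ is conjugate to an irrational rotation (Theorem~\ref{thm:herman}). More usefully, Lemma~\ref{lem:free-finite} gives that $A$ is topologically conjugate to a group of rotations. The quantity $\sum_{k=-N}^{N-1}J_f(f^kx)=\log\bigl(D_+f^{2N}/D_-f^{2N}\bigr)(f^{-N}x)$ is a jump of $f^{2N}$. It stabilizes at $C_f(x)$ once $N$ is large.

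The cleanest route to $C_f(x)=0$ I would try is the following. For a PL circle homeomorphism $F$, the sum of all jumps of $F$ over $S^1$ is $0$, because $\log DF$ is a step function on the circle. Now $F=f^{n}$ has breakpoints lying in $\bigcup_{0\le k<n} f^{-k}(BP(f))$. Group these breakpoints by $f$-orbit. For $n$ large, the orbits whose breakpoints are "complete" within the window contribute $C_f$ of that orbit. The difficulty is that boundary effects couple different orbits, so this counting is not yet conclusive.

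I expect this to be the main obstacle. The alternative I would try first uses commutation with $g$. By Lemma~\ref{properties jump}, $C_f(x)=C_{g^nfg^{-n}}(g^nx)=C_f(g^n x)$. Hence $C_f$ is constant along $g$-orbits as well as $f$-orbits, i.e.\ it is $A$-invariant. But $C_f$ is nonzero only on the finitely many $f$-orbits through breakpoints of $f$. The $A$-orbits are dense, and each is a union of infinitely many $f$-orbits; a single $A$-orbit contains infinitely many distinct $f$-orbits because $\langle f\rangle$ has infinite index in $A$. So if $C_f$ were nonzero on one $f$-orbit, it would be nonzero on infinitely many $f$-orbits, contradicting finiteness. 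Thus $C_f\equiv 0$.

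With $C_f\equiv0$, on each $f$-orbit through breakpoints the forward sum $\tau(f^kx)$ vanishes for $k\gg0$ (past all breakpoints). It also vanishes for $k\ll0$, since there $\tau$ equals the full sum $C_f=0$. Only finitely many orbits carry breakpoints, so $\tau$ has finite support.

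\textbf{Step 3: the case $h=g$.} Let $\sigma(x)=\tau(x)-\tau(gx)-J_g(x)$. The claim is that $\sigma\equiv0$. Using $fg=gf$ and the cocycle rule $J_{fg}=J_f\circ g+J_g=J_g\circ f+J_f$, together with Step 1, I compute $\sigma(fx)-\sigma(x)=0$. So $\sigma$ is $f$-invariant. It is also finitely supported, since $\tau$ and $J_g$ both are. A finitely supported function that is constant on the infinite $f$-orbits must vanish. Hence $\sigma\equiv 0$.

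\textbf{Step 4: all of $A$ and the consequence.} The set of $h$ satisfying \eqref{cocycle relation} is closed under composition and inverses, by the cocycle identity. Since it contains $f$ and $g$, it is all of $A$.

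Finally, if $x\in BP(h)$ then $J_h(x)\neq0$, so \eqref{cocycle relation} forces $\tau(x)\ne0$ or $\tau(hx)\ne0$. That is, $x\in\supp\tau\cup h^{-1}\supp\tau$.
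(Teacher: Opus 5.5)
Your argument is correct and is essentially the paper's: $C_f$ is constant along $g$-orbits by commutativity, and freeness puts infinitely many $f$-orbits (hence some avoiding the breakpoints of $f$) in each $A$-orbit, so $C_f\equiv 0$ and $\tau$ is finitely supported (a step you justify more explicitly than the paper does); telescoping then gives $h=f$, and the identity $J_f(x)+J_g(f(x))=J_g(x)+J_f(g(x))$ gives $h=g$, which you package as $f$-invariance of the finitely supported defect $\sigma$ rather than the paper's direct telescoping sum (an equivalent variant, though yours relies on finite support of $\tau$ while the paper's does not). The exploratory material at the start of Step~2 (conjugacy to rotations, summing jumps of $f^n$ over a window) plays no role and can simply be deleted.
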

\begin{proof}
    We first claim that $C_f(x)=0$ for all $x\in S^1$. Indeed, let $x\in S^1$. By Lemma \ref{properties jump} and the commutativity of $f$ and $g$, we have, for any $n\in\Z$, that 
    \begin{equation}\label{eq: C_f constant on g}
        C_f(x)=C_{g^nfg^{-n}}(g^n(x))=C_f(g^n(x)).
    \end{equation}
    Since the action of $A$ is free and the set of break points of $f$ is finite, we have that there exists $m\in\N$ such that 
    $$BP(f)\cap \{f^k(g^m(x)):k\in\Z\}=\varnothing.$$
    Thus, combining \eqref{eq: C_f constant on g} and the definition of $C_f$ we have that $C_f(x)=0.$ Hence, $\#\{x\in S^1 : C^w_f(x)\neq 0\}<\infty$. Therefore, the support of $\tau$ is finite.
 
    We now prove \eqref{cocycle relation} for the generators $f$ and $g$. Once this is done, the first assertion of Lemma \ref{properties jump} allows us to conclude that \eqref{cocycle relation} holds for every element of $A$. First, we note that $f$ satisfies equation \eqref{cocycle relation} by definition. Indeed,
    \begin{align*}
        \tau(x)-\tau(f(x))=\sum_{k\geq0} J_f(f^k(x))-J_f(f^{k+1}(x))=J_f(x).
    \end{align*}
    We next verify equation \eqref{cocycle relation} for $g$. Write
    \begin{align}\label{cocycle for g}
        \tau(x)-\tau(g(x))=\sum_{k\geq0} J_f(f^k(x))-J_f(f^kg(x)).
    \end{align}
By Lemma \ref{properties jump} and commutativity of $A$, the following equation holds
\begin{align}\label{jump eqn}
    J_f(x)-J_f(g(x))=J_g(x)-J_g(f(x)).
\end{align}
    Combining \eqref{jump eqn} and \eqref{cocycle for g} we obtain
\begin{align*}
    \tau(x)-\tau(g(x))=\sum_{k\geq0} J_g(f^k(x))-J_g(f^{k+1}(x))=J_g(x).
\end{align*}
This establishes the lemma.
\end{proof}


\begin{lem} \label{lem: word}
 Let $U\subset S^1$ be open and $V\subset U$ be compact. Given elements $s,r\in \PL_+(S^1)$ define the word 
    \[
        w(r,s):=[[r,s],[r^{-1},s^{-1}]].
    \]
    Then, there exist $\varepsilon>0$ such that, whenever $f,g\in \PL_+(S^1)$ satisfy 
    \[
    BP(f^{\pm 1})\cup BP(g^{\pm 1})\subset V\quad\text{and}\quad d_{\infty}(f,id)<\varepsilon,\quad d_{\infty}(g,id)<\varepsilon
    \] we have
    \begin{align*}
        \supp(w(f,g))\subset U.
    \end{align*}
\end{lem}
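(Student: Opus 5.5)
The idea is that away from $V$ each of $f^{\pm1},g^{\pm1}$ is locally affine. The affine group $\mathrm{Aff}(\R)=\{x\mapsto \alpha x+\beta\}$ is metabelian, and $w$ is a double commutator, so $w$ evaluates to the identity in $\mathrm{Aff}(\R)$. Closeness to the identity keeps all intermediate points inside a single affine chart. Concretely, let $\delta>0$ be the distance between the compact set $V$ and the closed set $S^1\setminus U$. If $V=\varnothing$, then $f,g$ have no breakpoints and are rotations, so $w(f,g)=\mathrm{id}$ and there is nothing to prove. We therefore assume $\delta<\infty$ and set $\varepsilon=\delta/40$. As a word in $r^{\pm1},s^{\pm1}$, $w(r,s)$ has length $16$; the exact constant is irrelevant.

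Fix $x\in S^1\setminus U$ and let $I$ be the open arc of length $\delta$ centred at $x$, so $I\cap V=\varnothing$. Identify $I$ isometrically with an interval of $\R$ via the covering map. By hypothesis none of $f,f^{-1},g,g^{-1}$ has a breakpoint in $I$. Since $I$ is connected, the restriction of each of these four maps to $I$ coincides with an affine map of $\R$ in this chart. Call these maps $A_f,A_f^{-1},A_g,A_g^{-1}$; the inverse relations hold on the appropriate subintervals. Small displacement also ensures the images of $I$ stay in a neighbourhood where the chart is single-valued, so the same lift is used throughout.

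Now evaluate $w(f,g)(x)$ letter by letter. Each letter moves a point by less than $\varepsilon$ in $d_\infty$, so after $k\le 16$ letters the image lies within $16\varepsilon<\delta/2$ of $x$, hence in $I$. By induction on $k$, each intermediate point $y_k$ equals the image of $x$ under the corresponding prefix word in $A_f^{\pm1},A_g^{\pm1}$. Indeed, $y_{k-1}\in I$, and on $I$ the next letter agrees with its affine model. Hence $w(f,g)(x)=w(A_f,A_g)(x)$. The commutators $[A_f,A_g]$ and $[A_f^{-1},A_g^{-1}]$ have derivative $1$, so they are translations. Two translations commute, so $w(A_f,A_g)=\mathrm{id}$. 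Thus $w(f,g)(x)=x$ for every $x\notin U$.

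Since $S^1\setminus U$ is closed and $w(f,g)$ fixes it pointwise, the support $\overline{\{y: w(f,g)(y)\neq y\}}$ is contained in the closure of $U$. To get $\supp\subset U$ itself, run the same argument with $U$ replaced by a smaller open set $U'$ with $V\subset U'\subset \overline{U'}\subset U$, for example $U'$ the $\delta/2$-neighbourhood of $V$, using the correspondingly smaller $\varepsilon$. The only step needing care is the bookkeeping that all $16$ intermediate points remain in one chart on which all four maps are simultaneously affine. This is exactly why the hypothesis includes breakpoints of the inverses, and why $\varepsilon$ must be chosen uniformly in terms of $\delta$ alone.
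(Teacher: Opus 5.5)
Your proof is correct and follows the same approach as the paper's proof. Both choose $\varepsilon$ to be $\mathrm{dist}(V,S^1\setminus U)$ divided by a constant exceeding the word length, observe that every intermediate image of a point $x\notin U$ stays in a breakpoint-free arc around $x$, and conclude from the fact that the affine group is metabelian. Your extra bookkeeping makes explicit several details the paper leaves implicit: the use of a single lift, the consistency of the affine pieces of $f$ and $f^{-1}$, and the shrinking of $U$ to control the closed support.
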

\begin{proof}
Let $n$ be the length of the word $w(f,g)$. Pick any $\varepsilon>0$ such that 
\[
0<\varepsilon<\frac{1}{n+1}\text{dist}(V,S^1\smallsetminus U). 
\]
Fix $x\in S^1\smallsetminus U$. Since each of the elements $f$ and $g$ is at distance less than $\varepsilon$ from the identity, each successive image of $x$ arising in the evaluation of a subword of $w(f,g)$ moves by at most $\varepsilon$ from the preceding one. Thus, every intermediate image avoids $V$, since it lies at distance at most $n\varepsilon$ from $x$.

Now, the evaluation $w(f,g)(x)$ is obtained as a composition of affine maps. Since the affine group is metabelian and $w(f,g)$ is a commutator of commutators, we have that $w(f,g)(x)=x$. Since $x\in S^1\smallsetminus U$ was arbitrary, this completes the proof of the lemma.
\end{proof}

\begin{lem}\label{lem: F_2 isom r,tst^-1}
Let $A\simeq \Z^2$, let $G=A\ast\langle t\rangle$, and let $r,s\in A\smallsetminus\{1\}$. Then, 
\[
  \langle r,tst^{-1} \rangle\simeq \langle r\rangle\ast \langle tst^{-1}\rangle\simeq F_2.
\]
Moreover, if an element $h\in G$ is not conjugate into a factor of $G$ we have that 
\[
  [h,ghg^{-1}]\neq 1,
\]
for all $g\in A\smallsetminus\{1\}$.
\end{lem}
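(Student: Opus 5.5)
The plan is to use the normal form theorem for free products and the fact that centralizers in free products are well understood.

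For the first assertion, let $r,s\in A\smallsetminus\{1\}$. Any nontrivial reduced word in $r^{\pm1}$ and $u:=tst^{-1}$ has the form $r^{m_1}u^{n_1}r^{m_2}\cdots$ with all exponents nonzero, except possibly the first and last. Substituting $u^{n}=ts^{n}t^{-1}$, we obtain a word in $G=A*\langle t\rangle$ of the form $\cdots r^{m_i}\,t\,s^{n_i}\,t^{-1}\,r^{m_{i+1}}\cdots$. Since $A\cong\Z^2$ is torsion-free and $r,s\neq1$, the syllables $r^{m_i}$ and $s^{n_i}$ are nontrivial elements of $A$, and $t^{\pm1}$ are nontrivial in $\langle t\rangle$. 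Consecutive syllables therefore alternate between the two factors. This is a reduced alternating word of positive length, hence nontrivial by the normal form theorem. Thus $\langle r,u\rangle$ is free on $\{r,u\}$, giving $\langle r\rangle*\langle u\rangle\cong F_2$.

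For the second assertion, let $h$ be not conjugate into a factor. After conjugating, assume $h$ is cyclically reduced of length at least $2$; the statement is conjugation invariant only up to replacing $g$ by a conjugate. To avoid that issue, I will instead argue directly with the Bass--Serre tree $T$ of the splitting. The element $h$ acts hyperbolically on $T$ with axis $\ell_h$, and $ghg^{-1}$ is hyperbolic with axis $g\ell_h$.

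Suppose $[h,ghg^{-1}]=1$. Two commuting hyperbolic elements of a tree have the same axis. Moreover, since edge stabilizers are trivial, the subgroup stabilizing that axis acts freely on it and is infinite cyclic. So $h$ and $ghg^{-1}$ both lie in the maximal cyclic subgroup $C=\langle z\rangle$ containing $h$, with $h=z^k$ and $ghg^{-1}=z^{\pm k}$; in fact the sign is $+$ by comparing translation directions and lengths. Hence $g\ell_h=\ell_h$, so $g$ stabilizes the axis $\ell_h$.

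Now $g\in A\smallsetminus\{1\}$ is elliptic, fixing the vertex $v_A$. An elliptic element stabilizing a line either fixes it pointwise or acts as a reflection. Reflections are impossible because $G$ acts without inversions and orientation-preservingly on lines coming from hyperbolic elements that commute with it; alternatively, one can argue that $g$ normalizes $C\cong\Z$ and the sign is $+$, so $g$ commutes with $z$. If $g$ fixes $\ell_h$ pointwise, it fixes an edge, contradicting triviality of edge stabilizers. If instead $g$ commutes with the hyperbolic $z$, then $g$ fixes $\ell_h$ pointwise as well, since $g$ fixes a point $p$ and then fixes all points $z^n p$, and hence their projections to the axis. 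This again contradicts trivial edge stabilizers.

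The main care point is this last step: ruling out that the elliptic $g$ acts on $\ell_h$ with a flip. I would handle it via the algebraic fact that $g$ conjugates $h$ to $h$ itself, i.e.\ $g$ centralizes $h$. The centralizer of a hyperbolic element in a free product with trivial edge groups is cyclic and hyperbolic, so it contains no nontrivial elliptic element.
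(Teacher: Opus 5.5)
Your proof of the first assertion is correct and takes a different, more direct route than the paper. You use the normal form theorem for free products to check that a reduced word in $r^{\pm1}$ and $tst^{-1}$ becomes an alternating word with nontrivial syllables, using that $A$ is torsion-free. The paper instead passes to the double cover of $T^2\vee S^1$ determined by $t\mapsto 1\in\Z/2\Z$. It then collapses the two tori, with fundamental groups $A$ and $tAt^{-1}$, onto circles, obtaining a map onto $F_2$ that sends $r$ and $tst^{-1}$ into different free factors. Your argument is shorter and purely algebraic.

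For the second assertion you follow the same Bass--Serre strategy as the paper. However, the step you yourself flag, excluding that $g$ acts on $\ell_h$ as a reflection, is not actually proved.

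\begin{itemize}
\item Saying the sign is $+$ ``by comparing translation directions'' is circular. The element $ghg^{-1}$ translates $g\ell_h=\ell_h$ in the direction obtained by applying $g$ to the direction of $h$. So the sign is $+$ exactly when $g$ does not flip $\ell_h$, and in the flip case one gets $ghg^{-1}=h^{-1}$. Thus ``$g$ centralizes $h$'' is precisely what is at stake, not something you can invoke.
\item Absence of inversions only forbids flipping an edge, not reflecting about a vertex.
\item Trivial edge stabilizers only embed the stabilizer of $\ell_h$ into the infinite dihedral group, not into $\Z$. So your claim that this stabilizer ``acts freely'' on the axis is also unsupported as stated.
\end{itemize}

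The ingredient your second part never uses is torsion-freeness of $A$, and the analogous statement fails without it. In $\langle a\rangle*\langle t\rangle$ with $a^2=1$, the element $h=tat^{-1}a$ is cyclically reduced of length four, hence not conjugate into a factor. Yet $aha^{-1}=atat^{-1}=h^{-1}$, which commutes with $h$.

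The repair is the paper's. From $ghg^{-1}=h^{\pm1}$ it follows that $g^2$ centralizes $h$, and $g^2\neq 1$ because $A\cong\Z^2$ is torsion-free. Since $g^2$ is elliptic, your own argument then gives the contradiction: an elliptic element commuting with $h$ fixes $\ell_h$ pointwise, hence fixes an edge. Equivalently, if $g$ reflected $\ell_h$, then $g^2$ would fix $\ell_h$ pointwise, forcing $g^2=1$.
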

\begin{proof}
The proof is a standard argument in combinatorial topology. For the first assertion, let $X$ be a one-point union of a two-dimensional torus and a circle, so that $\pi_1(X)\cong G$. The elements $r$ and $s$ can be represented by loops (based at the wedge point) in the torus, and $t$ is represented by a loop in the circle. Let $Y$ be the covering space given by the map $G\longrightarrow \Z/2\Z$ sending $t$ to a generator of $\Z/2\Z$ and $r,s\mapsto 0$. $Y$ is homotopy equivalent to a wedge of two tori and a circle; choosing an appropriate basepoint, we can identify the fundamental groups of the two tori with $A$ and $tAt^{-1}$, respectively. Collapsing the circle in $Y$ to a point and both tori to suitable circles, we obtain a surjective homomorphism from $\pi_1(Y)$ to $F_2\cong \Z*\Z$ sending $r$ and $s$ into different factors of $F_2$. Specifically, these collapsing maps of the tori can be built by choosing any homomorphisms $A\longrightarrow \Z$ which do not contain $r$ and $s$ in their respective kernels. The first claim follows.

For the second assertion, we choose a Bass-Serre splitting for $G$; see~\cite{Serre1977}. The assumption that $h$ is not conjugate into a free factor of $G$ means that $h$ acts by a loxodromic isometry of the Bass-Serre tree $T$ and admits an axis $\alpha$. The centralizer of $h$ in $G$ must preserve the axis. Since $G$ is a free product with trivial amalgamating group, the stabilizer of each edge of $T$ is trivial and hence the pointwise stabilizer of the axis is trivial. It follows that if $1\neq g\in A$ then the elliptic element $g$ cannot stabilize $\alpha$, since $g$ stabilizes a point. It also follows that the centralizer of $h$ must be infinite cyclic.

If $[h,ghg^{-1}]=1$ then $ghg^{-1}$ lies in the centralizer of $h$. Elements of $G$ act by isometries of $T$ and hence conjugation preserves translation length. It follows that, since the centralizer of $h$ is cyclic, that $ghg^{-1}$ is either equal to $h$ or $h^{-1}$; in either case, $g^2hg^{-2}=h$. In particular, $g^2$ centralizes $h$, and so the centralizer of $h$ contains an elliptic element, a contradiction.
\end{proof}

\bigskip

\begin{proof}[Proof of Proposition \ref{prop: no finite orbit}] Fix $c\in \PL_+(S^1)$ and consider the homomorphism  $A*\langle t\rangle\to \PL_+(S^1)$ defined in \eqref{eq:morphism A ast t}. Let $\tau: S^1\to \R$ be the function provided by Lemma \ref{lem: Jump potential}, and set $E:=\supp(\tau)$. Since $E$ is finite, the set
\[
F:=E\cup c(E)\cup c(BP(c)),
\]
is finite as well. Since the action of $A$ is free, we can take $r\in A$ such that $r(F)\cap F=\varnothing$. By the continuity of $r$ we may then choose a sufficiently small open neighborhood $U$ of $F$ such that $r(U)\cap U=\varnothing$.

Now, pick a compact neighborhood $V$ of $F$ such that $F\subset \text{int}( V)\subset V\subset U$, and let $\varepsilon>0$ be as in Lemma \eqref{lem: word}. By Lemmas \eqref{lem:free-finite} and \eqref{lem: Jump potential} we may choose $f,g\in A\smallsetminus \{1\}$ sufficiently close to the identity so that

\[
    BP(f^{\pm1})\subset E\cup f^{\mp1}(E)\subset V,
\]
and
\[
   BP(cg^{\pm1}c^{-1})\subset c(BP(c))\cup c(BP(g^{\pm1}))\cup cg^{\mp1}(BP(c))\subset V.
\]
Then, Lemma \ref{lem: word} implies that 
\[
  \supp(w(f,cgc^{-1}))\subset U.
\]
If $w(f,cgc^{-1})=1$, then the homomorphism \eqref{eq:morphism A ast t} is not injective, since Lemma~\ref{lem: F_2 isom r,tst^-1} implies that the word $w(f,tgt^{-1})$ is not trivial in $A\ast\langle t\rangle$. Now assume that $w(f,cgc^{-1})\neq 1$. Since $\supp(w(f,cgc^{-1}))\subset U$ and $ r(U)\cap U=\varnothing $, the elements $w(f,cgc^{-1})$ and $rw(f,cgc^{-1})r^{-1}$ have disjoint support, therefore 
\[
[w(f,cgc^{-1}),rw(f,cgc^{-1})r^{-1}]=1.
\]
The element on the left-hand side is the image of 
\begin{equation}\label{eq:commutator word}
[w(f,tgt^{-1}),rw(f,tgt^{-1})r^{-1}]
\end{equation}
under the homomorphism \eqref{eq:morphism A ast t}. Notice that $w(f,tgt^{-1})$ is not conjugate into either factor of $A\ast \langle t\rangle$. Indeed, it is a non trivial element of the commutator subgroup, whereas both factors are abelian and embed into the abelianization of $A\ast \langle t \rangle$. Then, Lemma~\ref{lem: F_2 isom r,tst^-1}  implies that the word \eqref{eq:commutator word} is nontrivial in $A\ast\langle t\rangle$. This completes the proof of the proposition.
\end{proof}

\subsection{Case 2: periodic $\Z^2$ action}

Let $\phi\colon\Z^2*\Z\longrightarrow \PL_+(S^1)$ be a homomorphism, where $\Z^2=\langle a,b\rangle$ and the free $\Z$ factor is generated by $t$. Write $\{A,B,T\}$ for the images of these generators under $\phi$.

In this section, we wish to show that if $\langle A,B\rangle\cong\Z^2$ has a finite orbit then $\phi$ cannot be injective. Clearly we may assume that $T$ has infinite order, and without loss of generality by passing to a finite index subgroup, $\langle A,B\rangle$ already fixes a point $p\in S^1$. Retaining this notation, we have the following:

\begin{lemma}\label{lem:z2z-periodic-orbit}
    If $\langle A,B\rangle$ fixes a point $p\in S^1$, then $\phi$ is not injective.
\end{lemma}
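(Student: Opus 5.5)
The plan is to produce a nonabelian free subgroup of $\Z^2*\Z$ whose image under $\phi$ fixes a single point of $S^1$, and then contradict the Brin--Squier Theorem~\ref{thm:brin-squier}. We argue by contradiction, so assume throughout that $\phi$ is injective; in particular $\langle A,B\rangle\cong\Z^2$.

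First, set $q:=T(p)$. By hypothesis $\langle A,B\rangle$ fixes $p$, so Lemma~\ref{lem:all-points-fixed} applies to $\langle A,B\rangle$. It gives a nontrivial element $G\in\langle A,B\rangle$ with $G(q)=q$. Write $G=\phi(r)$ with $r\in\langle a,b\rangle\smallsetminus\{1\}$; here $r\neq 1$ because $G\neq 1$. Next, the conjugate $TAT^{-1}$ fixes $q$, since $TAT^{-1}(T(p))=T(A(p))=T(p)$. Hence both $G$ and $TAT^{-1}=\phi(tat^{-1})$ lie in the stabilizer of $q$ in $\PL_+(S^1)$.

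By the first assertion of Lemma~\ref{lem: F_2 isom r,tst^-1}, applied with $s=a$, the subgroup $\langle r,tat^{-1}\rangle$ of $\langle a,b\rangle*\langle t\rangle$ is free of rank two. Since $\phi$ is injective, $\langle G,TAT^{-1}\rangle$ is then a nonabelian free group fixing $q$. Cutting $S^1$ open at $q$ identifies the stabilizer of $q$ in $\PL_+(S^1)$ with a subgroup of $\PL_+(I)$. This contradicts Theorem~\ref{thm:brin-squier}, so $\phi$ is not injective.

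There is no real obstacle in this argument. The only points to check are that Lemma~\ref{lem:all-points-fixed} yields a $G$ that is nontrivial as an element of $\Z^2$, which holds under the injectivity assumption, and that $a\neq 1$, so that Lemma~\ref{lem: F_2 isom r,tst^-1} applies.
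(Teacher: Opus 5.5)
Your proof is correct and follows essentially the same route as the paper: use Lemma~\ref{lem:all-points-fixed} to get a nontrivial element of $\langle A,B\rangle$ fixing $T(p)$, pair it with a $T$--conjugate of an element of $\langle A,B\rangle$ so that both fix a common point, and contradict Theorem~\ref{thm:brin-squier}. The differences are cosmetic. You conjugate forward to $q$, using $TAT^{-1}$ and the first part of Lemma~\ref{lem: F_2 isom r,tst^-1}, whereas the paper conjugates back to $p$, using $T^{-1}gT$ and Kurosh; and your uniform argument absorbs the paper's separate treatment of the case $T(p)=p$.
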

\begin{proof}
    Suppose first that $T$ also preserves $p$. In that case, $\langle A,B,T\rangle$ acts by piecewise linear homeomorphisms on the complement of $p$; in $\Z^2*\Z$, the elements $a$ and $t$ generate a free group, and so if $\phi$ is injective then $A$ and $T$ must generate a free group. This conclusion is precluded by Theorem~\ref{thm:brin-squier}.

    So, $T(p)=q\neq p$. There is a nontrivial element $g\in \langle A,B\rangle$ fixing $q$, by Lemma~\ref{lem:all-points-fixed}. Observe that $T^{-1}gT$ fixed $p$, and $g$ also fixed $p$ since $p$ is fixed by $\langle A,B\rangle$. Thus, $H=\langle T^{-1}gT,g\rangle$ acts by piecewise linear homeomorphisms on $S^1\setminus p$. If $\phi$ is injective then the Kurosh Subgroup Theorem says that $H$ is free of rank two. This contradicts the Brin--Squier Theorem~\ref{thm:brin-squier}.
\end{proof}

\subsection{Concluding the $\Z^2*\Z$ case}

We can now prove Lemma~\ref{lem:z2-star-z}. Suppose $\phi\colon \Z^2*\Z\longrightarrow \PL_+(S^1)$ is injective. By Lemma~\ref{lem:free-finite}, the action of $\Z^2$ is either free or has a finite orbit. Proposition~\ref{prop: no finite orbit} rules out the faithfulness of $\phi$ in the case of a free action, and Lemma~\ref{lem:z2z-periodic-orbit} rules out faithfulness in the case of a finite orbit.

\section{Proofs of the main results}\label{sec:classification}.

We now combine the facts we have established so far to prove the main results. 

\subsection{RAAG results}
We can now characterize the right-angled Artin subgroups of $\PL(S^1)$.
Theorem~\ref{thm:main} follows immediately from Lemma~\ref{lem:f2-times-z} and Lemma~\ref{lem:z2-star-z}.

\begin{proof}[Proof of Corollary~\ref{cor:raag}]
    By Lemma~\ref{lem:raag-power}, $A(\gam)$ embeds in $\PL(S^1)$ if and only if it embeds in $\PL_+(S^1)$.
    
    If $\gam$ is a finite simplicial graph with at most two vertices then $A(\gam)$ is either free or abelian, in which case it embeds in $\PL(S^1)$.
    
    Let $\gam$ be a finite simplicial graph that is not complete, has at least three vertices, and has at least one edge. Choose an arbitrary edge $e=\{v,w\}$ of $\gam$, subject to the condition that there is vertex $z$ of $\gam$ that is nonadjacent to at least one of $v$ and $w$. Such an pair $(e,z)$ exists because $\gam$ is assumed not to be complete.

    If $z$ is adjacent to neither $v$ nor $w$ then the subgroup of $A(\gam)$ generated by $\{v,w,z\}$ is isomorphic to $\Z^2*\Z$, which cannot embed in $\PL(S^1)$ by Lemma~\ref{lem:z2-star-z}. If $z$ is adjacent to one of these vertices, then the subgroup of $A(\gam)$ generated by $\{v,w,z\}$ is isomorphic to $F_2\times\Z$, which cannot embed in $\PL(S^1)$ by Lemma~\ref{lem:f2-times-z}.

    To establish the converse, it suffices to see that all finitely generated free and free abelian groups embed in $\PL(S^1)$. For a free group, we already have that the free group $F_2$ of rank two embeds in $\PL(S^1)$ since it already lies in Thompson's group $T$; this follows from a standard ping-pong argument; see~\cite{koberda-pingpong,dlHarpe2000}, for example. To show that all free abelian groups embed in $\PL(S^1)$ is straightforward.
\end{proof}

\subsection{Hyperbolic manifold fundamental group results}
The non-existence of $\Z^2*\Z\leq\PL(S^1)$, combined with the permutation cohomological machinery and basic facts about Poincar\'e duality groups, establishes the nonexistence of finite volume hyperbolic $n$--manifold subgroups of $\PL(S^1)$ whenever $n\geq 3$. We will indicate how Theorem~\ref{thm:hyp-structure} also implies Theorem~\ref{thm:main-hyp}.

\begin{proof}[Proof of Theorem~\ref{thm:main-hyp}]
    Suppose the contrary, so that $G\leq PL(S^1)$. Since we may freely pass to finite index subgroups of $G$, we may assume that $G$ lies in $PL_+(S^1)$.
    Let $H\leq G$ be a nontrivial subgroup. Since $G$ is torsion-free and word-hyperbolic, it follows from a standard ping-pong argument that $H$ is either cyclic or contains a nonabelian free group.

    Corollary~\ref{cor:permutation-cohomology} implies that there is a subgroup $H\leq G$ containing no nonabelian free subgroups such that $H^1(G,\R[G/H])\neq 0$. Such a group $H$ is automatically cyclic or trivial, by what we have just said. In particular, the cohomological dimension of $H$ is at most one. Since $n\geq 3$, we have $n-1\geq 2$. Lemma~\ref{lem:shapiro} implies that $H^1(G,\R[G/H])\cong H_{n-1}(H,D_H)$, and the latter of these vanishes since the cohomological dimension of $H$ is less than $n-1$. This is a contradiction.
\end{proof}

\begin{proof}[Proof of Corollary~\ref{cor:hyp}]
    Let $G=\pi_1(M)$, where $M$ is a finite volume hyperbolic manifold of dimension at least three. If $M$ is closed then Theorem~\ref{thm:main-hyp} shows that $G$ cannot be a subgroup of $\PL(S^1)$. If $M$ is not closed then it has a cusp, and so $G$ admits a cusp subgroup isomorphic to $\Z^{n-1}$. A standard ping-pong argument shows then that $G$ contains a copy of $\Z^2*\Z$, which cannot be a subgroup of $\PL(S^1)$ by Lemma~\ref{lem:z2-star-z}.
\end{proof}

\subsection{Acylindrical hyperbolicity results}
We will retain the assumption that $G$ is finitely generated, and if $G\leq \PL(S^1)$ then $G_+=G\cap\PL_+(S^1)$.

\begin{thm}\label{thm:acyl-z2}
    Let $G\leq\PL(S^1)$ be acylindrically hyperbolic. Then $G$ does not contain a subgroup isomorphic to $\Z^2$.
\end{thm}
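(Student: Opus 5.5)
Suppose for contradiction that $A\cong\Z^2$ sits inside $G$. The plan is to produce a copy of $\Z^2*\Z$ inside $\PL(S^1)$, which contradicts Lemma~\ref{lem:z2-star-z}.

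\emph{Reductions.} First pass to $G_+$ and to $A\cap G_+$. Both have index at most two, so $G_+$ is still acylindrically hyperbolic by Proposition~\ref{prop:acyl-back}(2), and $A\cap G_+$ is still a copy of $\Z^2$. Next, by Proposition~\ref{prop:acyl-back}(1), let $K=K(G_+)$ be the maximal finite normal subgroup. Since $\PL_+(S^1)$ has finite subgroups that are cyclic and act by rotation-like maps, $K$ may be nontrivial. To avoid working with a quotient that no longer acts on the circle, I will instead pass to the finite-index subgroup $G_0$ of $G_+$ centralizing $K$. Its image in $G_+/K$ has finite index, so it is acylindrically hyperbolic. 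I then replace $A$ by $A\cap G_0$, which is again a copy of $\Z^2$. Finally, fix a cobounded, nonelementary, acylindrical action of $\bar G=G_+/K$ on a hyperbolic space; such an action exists by the standard theory~\cite{osin-2016}.

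\emph{Showing $A$ is elliptic.} I claim the image $\bar A$ of $A$ in $\bar G$ is elliptic. Since $K$ is finite, $\bar A$ contains a copy of $\Z^2$. If some element $\bar a\in\bar A$ were loxodromic, then by Proposition~\ref{prop:acyl-back}(4) its centralizer would be virtually cyclic. But that centralizer contains $\bar A$, which contains $\Z^2$, a contradiction. Hence every element of $\bar A$ is elliptic by Proposition~\ref{prop:acyl-back}(3).

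\emph{Building the free product.} Apply Proposition~\ref{prop:abbott-dahmani} to $\bar G$ with the single elliptic subgroup $H_1=\bar A$. This gives $\bar g\in\bar G$ with $\langle\bar A,\bar g\rangle\cong\bar A*\langle\bar g\rangle$. In particular $\bar g$ has infinite order. Lift $\bar g$ to $g\in G_+$, and choose $A'\le A$ mapping isomorphically onto a copy of $\Z^2$ in $\bar A$. This is possible because $A\cap K$ is finite and $A$ is torsion-free, so $A\cap K$ is trivial and $A\to\bar A$ is already injective.

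Now consider the subgroup $\langle A,g\rangle\le G_+$. It surjects onto $\bar A*\langle\bar g\rangle\cong\Z^2*\Z$, and the generators map to the generators. Because $\Z^2*\Z$ is defined by relations only among the $\Z^2$ generators, and these relations hold in $A$, the natural map $\Z^2*\Z\to\langle A,g\rangle\to \bar A*\langle \bar g\rangle$ composes to an isomorphism. Therefore $\Z^2*\Z$ embeds in $G_+\le\PL_+(S^1)$, contradicting Lemma~\ref{lem:z2-star-z}.

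\emph{Main obstacle.} The delicate point is the hypotheses of Proposition~\ref{prop:abbott-dahmani}: it requires no finite normal subgroups and a cobounded action. Both are handled by passing to $G_+/K$ and using the standard result that every acylindrically hyperbolic group admits a cobounded acylindrical nonelementary action. The embedding still lives in $G_+$, because the quotient map restricted to $\langle A,g\rangle$ factors the isomorphism. The step requiring care is making sure the chosen finite-index passages keep $A$ nontrivially rank two. This holds because any finite-index subgroup of $\Z^2$ is again $\Z^2$.
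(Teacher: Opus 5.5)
Your argument is correct and is essentially the paper's proof: pass to $G_+$, quotient by the finite radical $K$ (which meets the torsion-free $\Z^2$ trivially), observe that the $\Z^2$ is elliptic because loxodromic elements have virtually cyclic centralizers, apply Abbott--Dahmani, and lift the resulting free product back to $G_+$ to contradict Lemma~\ref{lem:z2-star-z}. The detour through the subgroup $G_0$ centralizing $K$ is never used and can be deleted, and your direct check that $\Z^2*\Z\to\langle A,g\rangle\to\bar A*\langle\bar g\rangle$ is an isomorphism is a cleaner replacement for the paper's appeal to Hopficity (though the infinite order of $\bar g$ comes from the Abbott--Dahmani element being loxodromic, not from the free product decomposition alone).
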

\begin{proof}
    Without loss of generality, we may assume that $G=G_+$. We freely apply Propositions~\ref{prop:acyl-back} and ~\ref{prop:abbott-dahmani}. Choose a subgroup $\Z^2\cong H\leq G$. Quotienting out by the finite radical $K(G)$, we obtain a quotient group $Q$ of $G$; since $H\cap K(G)=\{1\}$, it follows that the restriction to $H$ of the quotient map to $Q$ is an isomorphism, and we identify $H$ with its image. Choosing a nonelementary acylindrical action of $Q$, see that $H$ is elliptic because every element has a copy of $\Z^2$ in its centralizer. 
    It follows that there exists an element $g\in Q$ such that $\langle g,H\rangle\cong \Z*\Z^2$ as a subgroup of $Q$. Pulling $H$ and $g$ back to $G$, we obtain a copy of $\Z^2*\Z$ in $G$, since the group $\Z^2*\Z$ is Hopfian. We obtain a contradiction from Lemma~\ref{lem:z2-star-z}.
\end{proof}

\begin{thm}\label{thm:acyl-centralizer}
    Let $G\leq\PL(S^1)$ be acylindrically hyperbolic, and let $x\in S^1$; here, $G$ need not be finitely generated. Then $G_x$ is either finite or virtually cyclic.
\end{thm}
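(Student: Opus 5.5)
Here is the plan. First pass to $G_+$, replacing $G_x$ by $(G_+)_x$, which has index at most two in $G_x$. Then $(G_+)_x$ acts faithfully by orientation-preserving PL homeomorphisms on $S^1\setminus\{x\}$, so it embeds in $\PL_+(I)$. By Theorem~\ref{thm:acyl-z2}, $G$ contains no copy of $\Z^2$, so neither does $(G_+)_x$. Every finitely generated subgroup of $(G_+)_x$ is a subgroup of $\PL_+(I)$ without $\Z^2$. By Proposition~\ref{prop:cyclic-trivial}, each such subgroup is trivial or cyclic; that proposition does not even need finite generation. Hence $(G_+)_x$ is trivial or infinite cyclic, since $\PL_+(I)$ is torsion-free. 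Therefore $G_x$ is finite or virtually cyclic.

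The one point needing care is that Theorem~\ref{thm:acyl-z2} was stated with the standing assumption that $G$ is finitely generated. Here $G$ may not be. Its proof, however, only uses Propositions~\ref{prop:acyl-back} and~\ref{prop:abbott-dahmani} and the Hopfian property of $\Z^2*\Z$, none of which require finite generation. There is one caveat. Proposition~\ref{prop:abbott-dahmani} asks for a cobounded action, and any acylindrically hyperbolic group admits a cobounded nonelementary acylindrical action (Osin). So I would note that the argument of Theorem~\ref{thm:acyl-z2} goes through verbatim, and that the passage to $G/K(G)$ is fine because $\Z^2$ meets the finite group $K(G)$ trivially.

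I expect this to be the main (mild) obstacle: checking that the no-$\Z^2$ conclusion really holds without finite generation. An alternative that avoids it uses that stabilizers commute with nothing wild. If $(G_+)_x$ contained $\Z^2$, apply the Abbott–Dahmani argument directly to that $\Z^2$, as in the proof of Theorem~\ref{thm:acyl-z2}, to get $\Z^2*\Z\le G$. This contradicts Lemma~\ref{lem:z2-star-z}, which has no finiteness hypothesis on $G$. Everything else is immediate from Proposition~\ref{prop:cyclic-trivial}.
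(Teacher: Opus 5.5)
Your proposal is correct and follows the paper's proof essentially verbatim: restrict to orientation-preserving elements, cut the circle open at $x$ to view the stabilizer inside $\PL_+(I)$, rule out $\Z^2$ via Theorem~\ref{thm:acyl-z2}, and conclude with Proposition~\ref{prop:cyclic-trivial}. Your check that Theorem~\ref{thm:acyl-z2} does not need finite generation is sound, and it is a point the paper passes over silently: Osin's cobounded nonelementary acylindrical action makes Proposition~\ref{prop:abbott-dahmani} applicable, and $\Z^2$ meets $K(G)$ trivially. The ``alternative'' in your last paragraph is the same argument repeated, so it adds nothing.
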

\begin{proof}
    Again, we assume $G=G_+$. Cutting $S^1$ open at $x$, we identify $G_x$ with a subgroup of $\PL(I)$. A subgroup of $\PL(I)$ containing no copy of $\Z^2$ is either cyclic or trivial by Proposition~\ref{prop:cyclic-trivial}. The group $G_x$ cannot contain a copy of $\Z^2$ by Theorem~\ref{thm:acyl-z2}. The conclusion follows.
\end{proof}

\begin{proof}[Proof of Theorem~\ref{thm:acyl}]
    Again, we assume $G=G_+$. Since $G$ is nonabelian, we may apply Theorem~\ref{thm:rel-end} to conclude that there is a normal subgroup $K\leq G_x$ for some $x\in S^1$ such that $e(G,K)>1$. By Theorem~\ref{thm:acyl-centralizer}, the group $G_x$ is trivial or infinite cyclic. It follows that $K$ is also either trivial or cyclic.
    
    We may apply Theorem~\ref{thm:dunwoody-swenson} and consider its three possible conclusions. Under the first possible conclusion, the group $G$ itself is virtually polycyclic, in which case it must be virtually cyclic, by Lemma~\ref{lem:polycyclic-cyclic}.

    Under the second possible conclusion, we apply Proposition~\ref{prop:fuchs-extension} to conclude that $G$ is virtually free or virtually a closed hyperbolic surface group.

    Under the third possible conclusion, $G$ splits over a virtually polycyclic group, which by Theorem~\ref{thm:acyl-z2} and Proposition~\ref{lem:polycyclic-cyclic} is virtually cyclic.
\end{proof}

\begin{proof}[Proof of Corollary~\ref{cor:mcg}]
    The mapping class group of a surface of complexity at least one is acylindrically hyperbolic via its action on the curve graph; see~\cite{Bowditch2008}. If the complexity of the underlying surface is at least two then two Dehn twists about disjoint curves generate a copy of $\Z^2$. Thus, all finite index subgroups contain copies of $\Z^2$, and so Theorem~\ref{thm:acyl-z2} precludes a faithful action of a finite index subgroup of the mapping class group by piecewise linear homeomorphisms.
\end{proof}

Since word hyperbolic groups are themselves acylindrically hyperbolic, since surface and free groups split over trivial or cyclic groups, and since torsion-free virtually cyclic groups are cyclic, we obtain the following useful corollary of Theorem~\ref{thm:acyl} which excludes rigid word-hyperbolic subgroups of $\PL_+(S^1)$:

\begin{cor}\label{cor:tf-hyp-rigid}
    Let $G$ be a torsion-free nonelementary word-hyperbolic subgroup that admits no splitting over an infinite cyclic group or over the trivial group. Then $G$ is not isomorphic to a subgroup of $\PL_+(S^1)$.
\end{cor}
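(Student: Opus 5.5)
The plan is to argue by contradiction and feed the hypothetical embedding into Theorem~\ref{thm:acyl}. Suppose $G$ is torsion-free, nonelementary, word-hyperbolic, admits no splitting over $\{1\}$ or over $\Z$, and is isomorphic to some $G'\leq\PL_+(S^1)$.

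First we verify the hypotheses of Theorem~\ref{thm:acyl}. Every word-hyperbolic group is finitely generated. A nonelementary word-hyperbolic group acts acylindrically and nonelementarily on its Cayley graph, so $G'$ is acylindrically hyperbolic. Nonelementary means that $G'$ is not virtually cyclic. Because $G'\leq\PL_+(S^1)$, we have $G'_+=G'$.

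Theorem~\ref{thm:acyl} then leaves three possibilities, and we rule out each one.

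\textbf{Virtually free.} A torsion-free virtually free group is free by Stallings' theorem. It is nontrivial and not cyclic, since $G'$ is nonelementary. So $G'$ is a free group of rank at least two, which splits as a nontrivial free product, that is, over the trivial group. This contradicts the hypothesis.

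\textbf{Virtually a closed hyperbolic surface group.} A torsion-free group that is virtually a closed hyperbolic surface group is itself a closed surface group. This follows from the Nielsen realization and convergence group results of Tukia, Gabai and Casson--Jungreis: a torsion-free cocompact Fuchsian group is a surface group. If the surface is nonorientable, the group is still a closed surface group. A closed surface group of genus at least two splits over $\Z$ by cutting along an essential simple closed curve, again a contradiction. This case is the main obstacle, because it relies on the external torsion-free Fuchsian result. If we want to avoid that, we can instead recall that conclusion (2) of Theorem~\ref{thm:acyl} arises through Proposition~\ref{prop:fuchs-extension}. There the extension $1\to P\to G'\to Q\to 1$ has $P$ finite, hence trivial since $G'$ is torsion-free, so $G'\cong Q$ is a torsion-free Fuchsian group. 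Torsion-free Fuchsian groups are surface groups, possibly of open surfaces, in which case they are free and fall under the previous case.

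\textbf{$G'_+=G'$ splits over a virtually cyclic subgroup.} Since $G'$ is torsion-free, a virtually cyclic subgroup of $G'$ is trivial or infinite cyclic. So $G'$ splits over $\{1\}$ or over $\Z$, contradicting the hypothesis.

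All three conclusions are contradictory, so no such embedding exists.
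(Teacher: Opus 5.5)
Your proposal is correct and follows the paper's argument. You apply Theorem~\ref{thm:acyl} to the finitely generated, nonelementary, hence acylindrically hyperbolic group, and check that each of its three conclusions forces a splitting over the trivial group or over $\Z$, using that torsion-free virtually cyclic groups are cyclic. You are more careful than the paper's one-line justification, which tacitly uses that torsion-free virtually free groups are free and torsion-free virtually closed surface groups are closed surface groups. Your alternative for the surface case is also sound: following the proof of Theorem~\ref{thm:acyl} into Proposition~\ref{prop:fuchs-extension}, you get $P=1$ and $G\cong Q$, which avoids the convergence-group input.
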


\subsection{Structure of word-hyperbolic subgroups of $\PL(S^1)$}
In this section we prove Theorem~\ref{thm:hyp-structure}.

\begin{prop}\label{prop:PL-hyp-hierarchy}
    Let $G\leq\PL_+(S^1)$ be torsion-free and word-hyperbolic group. Then $G$ admits a quasiconvex hierarchy whose edge groups are all trivial or cyclic, and whose terminal groups are trivial. In particular, $G$ is virtually compact special and therefore virtually cubulated.
\end{prop}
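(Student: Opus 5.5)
The plan is to run the Louder--Touikan hierarchy (Theorem~\ref{thm:louder-touikan}) on $G$ and then show, using Corollary~\ref{cor:tf-hyp-rigid}, that its terminal groups can be taken to be trivial. Once that is done, Wise's Theorem~\ref{thm:wise} gives virtual compact specialness, hence virtual cubulation.

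By Theorem~\ref{thm:louder-touikan}, $G$ admits a finite hierarchy whose edge groups are trivial or infinite cyclic. Each terminal group is trivial, infinite cyclic, or admits no splitting over a trivial or infinite cyclic group. Quasiconvexity propagates down the hierarchy by Theorem~\ref{thm:vavrichek} and transitivity of quasiconvexity. At each level, the vertex groups of a splitting of a hyperbolic group over trivial or cyclic edge groups are quasiconvex, hence themselves word-hyperbolic and torsion-free. Consequently every group in the hierarchy is quasiconvex in $G$, and every edge group, being trivial or cyclic, is quasiconvex.

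Now consider a terminal group $T$. It is a subgroup of $G\leq\PL_+(S^1)$, so it is a torsion-free word-hyperbolic subgroup of $\PL_+(S^1)$. If $T$ were nonelementary and admitted no splitting over a trivial or infinite cyclic group, Corollary~\ref{cor:tf-hyp-rigid} would give a contradiction. Torsion-free elementary hyperbolic groups are trivial or infinite cyclic, so every terminal group is trivial or $\Z$. A terminal $\Z$ splits as an HNN extension of the trivial group over the trivial group, so we extend the hierarchy by one more level at each such leaf. The resulting finite quasiconvex hierarchy has trivial or cyclic edge groups and trivial terminal groups.

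Theorem~\ref{thm:wise} then applies, since trivial groups are finite. So $G$ is virtually compact special, and in particular virtually cubulated.

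The main point requiring care is the applicability of Corollary~\ref{cor:tf-hyp-rigid} to the terminal groups. Its proof goes through Theorem~\ref{thm:acyl}, which requires finite generation. Quasiconvex subgroups of hyperbolic groups are finitely generated and hyperbolic, so this holds. One should also check that the conclusion ``$G_+$ splits over a virtually cyclic subgroup'' in Theorem~\ref{thm:acyl} reduces to a cyclic or trivial splitting in the torsion-free case. That is immediate, since $T\leq\PL_+(S^1)$ means $T_+=T$, and torsion-free virtually cyclic groups are cyclic.
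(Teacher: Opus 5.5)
Your proposal is correct and follows essentially the same route as the paper. Both arguments take the Louder--Touikan hierarchy, use Corollary~\ref{cor:tf-hyp-rigid} to exclude nonelementary rigid terminal groups, and rewrite infinite cyclic leaves as HNN extensions of the trivial group. They then use Theorem~\ref{thm:vavrichek} to propagate quasiconvexity and conclude with Wise's Theorem~\ref{thm:wise}. Your explicit check that terminal groups are quasiconvex, hence finitely generated and hyperbolic, so that the corollary applies, is a point the paper leaves implicit.
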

\begin{proof}
    By Theorem~\ref{thm:louder-touikan}, we see that $G$ admits a hierarchy whose edge groups are all trivial or cyclic, and whose terminal groups are trivial, infinite cyclic, or admit no nontrivial splitting over trivial or infinite subgroups. We may assume that no terminal groups are infinite cyclic, since an infinite cyclic group is an HNN extension over the trivial group. Applying Corollary~\ref{cor:tf-hyp-rigid} shows that we may assume all terminal groups are trivial.
    
    An easy induction using Theorem~\ref{thm:vavrichek} shows that, reconstructing $G$ level by level in the hierarchy, each group appearing is quasiconvex in the group one level higher. The virtual compact specialness of $G$ now follows from Theorem~\ref{thm:wise}.
\end{proof}

\begin{prop}\label{prop:hyp-class-space}
    Let $G\leq\PL_+(S^1)$ be torsion-free and word-hyperbolic. Then $G$ admits a classifying space of dimension at most two.
\end{prop}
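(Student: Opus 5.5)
The plan is to build the classifying space directly from the hierarchy of Proposition~\ref{prop:PL-hyp-hierarchy}, by induction up the hierarchy using graphs of spaces. By that proposition, $G$ admits a finite hierarchy whose edge groups are trivial or infinite cyclic and whose terminal groups are trivial. We also know $G$ is torsion-free, so every group occurring in the hierarchy is torsion-free. We induct from the leaves of the hierarchy tree toward the root, proving that every group $G_v$ assigned to a node $v$ has a classifying space of dimension at most two.

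\textbf{Base case.} A terminal group is trivial, and a point is a $K(\{1\},1)$ of dimension $0$.

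\textbf{Inductive step.} Suppose $G_v$ is a non-terminal node with a finite graph of groups decomposition, and that each vertex group $G_w$ already has a $K(G_w,1)$ space $X_w$ of dimension at most two. Each edge group $E$ is trivial or infinite cyclic, so it has a classifying space $Y_e$ (a point or a circle) of dimension at most one. Each edge inclusion $E\to G_w$ is realized, up to homotopy, by a cellular map $Y_e\to X_w$. When $E$ is infinite cyclic, this map is just a loop representing the image of the generator, which we may take in the $1$--skeleton. We then form the graph of spaces
\[
X_v=\Big(\bigsqcup_w X_w \sqcup \bigsqcup_e Y_e\times[0,1]\Big)\Big/\sim .
\]
Because the edge maps are $\pi_1$--injective and all pieces are aspherical, the standard result of Scott--Wall says that $X_v$ is aspherical with $\pi_1(X_v)\cong G_v$. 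Its dimension is
\[
\max\big(\dim X_w,\ \dim Y_e+1\big)\le 2 .
\]
Note that realizing the attaching maps cellularly requires mapping cylinders, and the mapping cylinder of a map from a circle into a $2$--complex has dimension at most two.

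Finishing the induction at the root yields a $K(G,1)$ of dimension at most two. The same argument applies to $G_+$ in Theorem~\ref{thm:hyp-structure}.

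\textbf{Main obstacle.} The one point needing care is asphericity of the graph of spaces. We need each edge map to be $\pi_1$--injective, and this holds because the graph of groups decomposition identifies edge groups with subgroups of the vertex groups. The appeal to Scott--Wall is otherwise standard.

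\textbf{Alternative route.} One could instead argue via cohomological dimension. A Mayer--Vietoris argument gives
\[
\mathrm{cd}(G_v)\le\max\big(\mathrm{cd}(G_w),\ \mathrm{cd}(E)+1\big)\le 2 .
\]
However, passing from cohomological dimension two to geometric dimension two runs into the Eilenberg--Ganea problem, so the explicit graph-of-spaces construction is preferable.
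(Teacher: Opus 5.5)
Your proposal is correct and follows essentially the same route as the paper: induct up the hierarchy of Proposition~\ref{prop:PL-hyp-hierarchy}, use points and circles as classifying spaces for the trivial and infinite cyclic edge groups, and assemble graphs of spaces of dimension at most two at each level. The only difference is cosmetic: you cite Scott--Wall for asphericity, whereas the paper checks directly that the universal cover, assembled along the Bass--Serre tree, is contractible, which is exactly the argument behind that result.
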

\begin{proof}
    This follows by induction on the length of the hierarchy from Proposition~\ref{prop:PL-hyp-hierarchy}. We use points as classifying spaces of the trivial groups and circles for infinite cyclic groups. We then assemble graphs of groups at each level, with attaching maps cellular and injective on fundamental groups; we can even arrange for the attaching maps to lift to homeomorphism on the level of universal covers. It is trivial then to see that the resulting complex is two-dimensional.

    By induction, we may assume that the universal covers of the complexes of each vertex group in a graph of groups decomposition is contractible. The universal covers of the complexes corresponding to the edge spaces are also contractible, and are assembled in the universal cover of the graph of spaces according to the structure of the corresponding Bass-Serre tree. It follows that the universal cover of the graph of spaces is also contractible, as desired.
\end{proof}

In Proposition~\ref{prop:hyp-class-space}, we note that the group $G$ will admit a classifying space of dimension less than two precisely when $G$ is free. Note that Proposition~\ref{prop:hyp-class-space} implies immediately that no closed hyperbolic $n$--manifolds fundamental group (or more generally an $n$--dimensional hyperbolic Poincar\'e duality group) can act faithfully on the circle by piecewise linear homeomorphisms. For manifolds with cusps, the fundamental group contains copies of $\Z*\Z^2$, which we have already ruled out. Thus, we obtain another proof of Theorem~\ref{thm:main-hyp} that does not rely on Poincar\'e duality.

\begin{prop}\label{prop:pl-surface}
    Let $G\leq\PL(S^1)$ be torsion-free word-hyperbolic. Then either:
    \begin{enumerate}
        \item $G$ is free.
        \item $G$ contains a quasiconvex closed hyperbolic surface subgroup.
    \end{enumerate}
\end{prop}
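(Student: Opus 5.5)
We prove Proposition~\ref{prop:pl-surface} by combining the Louder--Touikan hierarchy with Wilton's dichotomy, using Corollary~\ref{cor:tf-hyp-rigid} to rule out rigid pieces. First we reduce to the orientation preserving case. Since $G_+$ has index at most two in $G$, a quasiconvex surface subgroup of $G_+$ is quasiconvex in $G$, because quasiconvexity passes through finite index subgroups and is transitive. If instead $G_+$ is free, then $G$ is a torsion-free virtually free group, hence free by Stallings' theorem. So we may assume $G=G_+\leq\PL_+(S^1)$.

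Next we use Grushko/Stallings to split $G$ as a free product $G_1*\cdots*G_k*F$ with $F$ free and each $G_i$ freely indecomposable and not infinite cyclic. Since $G$ is torsion-free, each $G_i$ is one-ended. By Theorem~\ref{thm:vavrichek}, applied to the splitting over the trivial group, each $G_i$ is quasiconvex in $G$, hence word-hyperbolic. Each $G_i$ also lies in $\PL_+(S^1)$. If there are no $G_i$, then $G$ is free and we are done. Otherwise fix a one-ended factor $G_1$. It suffices to find a quasiconvex closed surface subgroup of $G_1$, since transitivity of quasiconvexity then makes it quasiconvex in $G$.

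Now apply Theorem~\ref{thm:wilton} to $G_1$; it has no element of order two because it is torsion-free. Either $G_1$ contains a quasiconvex closed hyperbolic surface subgroup, which is what we want, or it contains an infinite quasiconvex rigid subgroup $R$. We must exclude the second case. The subgroup $R$ is torsion-free, and quasiconvexity makes it word-hyperbolic. It is also a subgroup of $\PL_+(S^1)$.

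The main obstacle is matching Wilton's notion of ``rigid'' with the hypothesis of Corollary~\ref{cor:tf-hyp-rigid}. In the paper, rigid means $R$ admits no nontrivial splitting over a virtually cyclic subgroup. Since $R$ is torsion-free, such subgroups are trivial or infinite cyclic, so $R$ splits over neither. We also need $R$ nonelementary. If $R$ were infinite cyclic it would be an HNN extension over the trivial group, so it would not be rigid. As $R$ is infinite and torsion-free, it is therefore nonelementary. Corollary~\ref{cor:tf-hyp-rigid} now says $R$ cannot embed in $\PL_+(S^1)$, a contradiction. Hence the first alternative of Theorem~\ref{thm:wilton} holds, and the proposition follows.
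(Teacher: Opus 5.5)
Your proposal is correct and follows essentially the paper's route: take a Grushko factor, note it is one-ended and quasiconvex, apply Wilton's dichotomy (Theorem~\ref{thm:wilton}), and exclude the rigid alternative using the HNN observation for the cyclic case and Corollary~\ref{cor:tf-hyp-rigid} for the nonelementary case. The differences are only cosmetic. You pass to $G_+$ at the outset and handle the ``$G_+$ free $\Rightarrow$ $G$ free'' step explicitly, whereas the paper passes to an index-two subgroup of the factor afterwards. You also justify quasiconvexity of the Grushko factors via Theorem~\ref{thm:vavrichek}, where the paper calls it straightforward.
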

\begin{proof}
    We factor $G$ maximally into a free product of freely indecomposable groups and a free group; this is the well-known Grushko decomposition. If $G$ is not itself free, then this factorization has at least one freely indecomposable factor $H$.
    
    Since $H$ is not (virtually) cyclic and freely indecomposable, Stallings' Theorem about ends of groups implies that $H$ has one end. It is straightforward to argue that $H$ is quasiconvex in $G$ and is therefore word-hyperbolic itself.

    Passing to an index two subgroup if necessary, we may assume $H\leq \PL_+(S^1)$.
    By Theorem~\ref{thm:wilton}, we obtain that $H$ either contains a quasiconvex closed hyperbolic surface subgroup, or an infinite quasiconvex rigid subgroup. It suffices to show that the second possibility does not occur.
    
    If $L$ is an infinite rigid quasiconvex subgroup, then $L$ is word-hyperbolic. If $L$ is cyclic then $L$ admits a splitting as an HNN extension over the trivial group, a contradiction. Otherwise, $L$ is nonelementary, and we may apply Corollary~\ref{cor:tf-hyp-rigid} and find a splitting of $L$ over the trivial group or a cyclic group, which is a contradiction.
\end{proof}

Finally, we can establish the structure theorem for word-hyperbolic subgroups of $\PL(S^1)$ as claimed in the introduction.

\begin{proof}[Proof of Theorem~\ref{thm:hyp-structure}]
    Passing to a finite index subgroup as usual, we may assume $G=G_+$. Since a torsion-free virtually free group is free, we have $G$ is free if and only if $G_+$ is free.
    
    Proposition~\ref{prop:PL-hyp-hierarchy} shows that $G$ is virtually compact special. That $G$ admits a classifying space of dimension at most two is Proposition~\ref{prop:hyp-class-space}.

    If $G$ is not free, then Proposition~\ref{prop:pl-surface} shows that $G$ contains a quasiconvex closed hyperbolic surface group $H$.

    Passing to a finite index subgroup of $G$ and intersecting with $H$, we may assume that $G$ is the fundamental group of a compact special cube complex and (the corresponding finite index subgroup of) $H$ is a quasiconvex closed hyperbolic surface subgroup of $G$. By Theorem~\ref{thm:canonical-completion}, we obtain a finite index subgroup $G_0$ of $G$ and a retraction $r\colon G_0\longrightarrow H$.

    To complete the proof, we note that $H$ surjects to a nonabelian free group, and therefore so does $G_0$. Since $H^2(H,\mathbb Q)\neq 0$, we obtain $H^2(G_0,\mathbb Q)\neq 0$ via pullback by $r^*$, because $r$ is a retraction.
\end{proof}

\bibliographystyle{plain}
\bibliography{ref}

@unpublished{MBT2026,
	author = {Matte Bon, Nicolás and Triestino Michele},
	note = {preprint},
	title = {$\mathbb{Z}^2$-free subgroups of Thompson’s $T$ are virtually free},
    year = {2026}
    }

@article {DS2000,
    AUTHOR = {Dunwoody, M. J. and Swenson, E. L.},
     TITLE = {The algebraic torus theorem},
   JOURNAL = {Invent. Math.},
  FJOURNAL = {Inventiones Mathematicae},
    VOLUME = {140},
      YEAR = {2000},
    NUMBER = {3},
     PAGES = {605--637},
      ISSN = {0020-9910,1432-1297},
   MRCLASS = {20F65},
  MRNUMBER = {1760752},
MRREVIEWER = {Ilya\ Kapovich},
       DOI = {10.1007/s002220000063},
       URL = {https://doi.org/10.1007/s002220000063},
}

@article {Ghys1987GV,
    AUTHOR = {Ghys, \'{E}tienne},
     TITLE = {Sur l'invariance topologique de la classe de
              {G}odbillon-{V}ey},
   JOURNAL = {Ann. Inst. Fourier (Grenoble)},
  FJOURNAL = {Universit\'{e} de Grenoble. Annales de l'Institut Fourier},
    VOLUME = {37},
      YEAR = {1987},
    NUMBER = {4},
     PAGES = {59--76},
      ISSN = {0373-0956,1777-5310},
   MRCLASS = {57R30 (57R32 58F18)},
  MRNUMBER = {927391},
MRREVIEWER = {Steven\ E.\ Hurder},
       DOI = {10.5802/aif.1111},
       URL = {https://doi.org/10.5802/aif.1111},
}

@book{dMvS1993,
	address = {Berlin, Heidelberg},
	author = {Melo, Welington and Strien, Sebastian},
	doi = {10.1007/978-3-642-78043-1},
	isbn = {978-3-642-78045-5 978-3-642-78043-1},
	language = {en},
	publisher = {Springer Berlin Heidelberg},
	title = {One-{Dimensional} {Dynamics}},
	url = {http://link.springer.com/10.1007/978-3-642-78043-1},
	urldate = {2024-02-05},
	year = {1993}}

@article {BS2013,
    AUTHOR = {Bleak, Collin and Salazar-D\'iaz, Olga},
     TITLE = {Free products in {R}. {T}hompson's group {$V$}},
   JOURNAL = {Trans. Amer. Math. Soc.},
  FJOURNAL = {Transactions of the American Mathematical Society},
    VOLUME = {365},
      YEAR = {2013},
    NUMBER = {11},
     PAGES = {5967--5997},
      ISSN = {0002-9947,1088-6850},
   MRCLASS = {20F65 (20E06 37C85)},
  MRNUMBER = {3091272},
MRREVIEWER = {Andriy\ S.\ Ol\=\i\u inik},
       DOI = {10.1090/S0002-9947-2013-05823-0},
       URL = {https://doi.org/10.1090/S0002-9947-2013-05823-0},
}

@article{BS1985,
	author = {Brin, Matthew G. and Squier, Craig C.},
	doi = {10.1007/BF01388519},
	fjournal = {Inventiones Mathematicae},
	issn = {0020-9910,1432-1297},
	journal = {Invent. Math.},
	mrclass = {57S05 (20F32 57S25)},
	mrnumber = {782231},
	mrreviewer = {Bruno\ P.\ Zimmermann},
	number = {3},
	pages = {485--498},
	title = {Groups of piecewise linear homeomorphisms of the real line},
	url = {https://doi.org/10.1007/BF01388519},
	volume = {79},
	year = {1985}}

@article{KKR2021,
	author = {Kim, Sang-hyun and Koberda, Thomas and Rivas, Crist\'obal},
	doi = {10.3934/jmd.2021009},
	fjournal = {Journal of Modern Dynamics},
	issn = {1930-5311,1930-532X},
	journal = {J. Mod. Dyn.},
	mrclass = {57M60 (20F14 20F36 20F60 37C85 37E05)},
	mrnumber = {4288175},
	mrreviewer = {Andrzej\ W.\ Bi\'s},
	pages = {285--304},
	title = {Direct products, overlapping actions, and critical regularity},
	url = {https://doi.org/10.3934/jmd.2021009},
	volume = {17},
	year = {2021}}

@article {KKR2024,
    AUTHOR = {Kim, Sang-hyun and Koberda, Thomas and Rivas, Crist\'obal},
     TITLE = {Virtual critical regularity of mapping class group actions on
              the circle},
   JOURNAL = {Transform. Groups},
  FJOURNAL = {Transformation Groups},
    VOLUME = {29},
      YEAR = {2024},
    NUMBER = {3},
     PAGES = {1105--1114},
      ISSN = {1083-4362,1531-586X},
   MRCLASS = {57M60 (20F14 20F36 37C05 37C85 57K20)},
  MRNUMBER = {4788024},
MRREVIEWER = {Marja\ K.\ Kankaanrinta},
       DOI = {10.1007/s00031-022-09698-9},
       URL = {https://doi.org/10.1007/s00031-022-09698-9},
}

@article{KK2020crit,
	author = {{Kim}, {S.-h.} and Koberda, T.},
	doi = {10.1007/s00222-020-00953-y},
	fjournal = {Inventiones Mathematicae},
	issn = {0020-9910},
	journal = {Invent. Math.},
	mrclass = {57M60 (20F36 37C05 37C85 57S05)},
	mrnumber = {4121156},
	number = {2},
	pages = {421--501},
	title = {Diffeomorphism groups of critical regularity},
	url = {https://doi.org/10.1007/s00222-020-00953-y},
	volume = {221},
	year = {2020}}

@article{BKK2019JEMS,
	author = {Baik, H. and Kim, {S.-h.} and Koberda, T.},
	doi = {10.4171/jems/886},
	fjournal = {Journal of the European Mathematical Society (JEMS)},
	issn = {1435-9855},
	journal = {J. Eur. Math. Soc. (JEMS)},
	mrclass = {20F36 (20F65 37E10 57M60)},
	mrnumber = {4035847},
	number = {8},
	pages = {2333--2353},
	title = {Unsmoothable group actions on compact one-manifolds},
	url = {https://mathscinet.ams.org/mathscinet-getitem?mr=4035847},
	volume = {21},
	year = {2019}}

@article{KK2018JT,
	author = {Kim, {S.-h.} and Koberda, T.},
	doi = {10.1112/topo.12079},
	fjournal = {Journal of Topology},
	issn = {1753-8416},
	journal = {J. Topol.},
	mrclass = {57M60 (20F36 37C05 37C85 57S05)},
	mrnumber = {3989437},
	number = {4},
	pages = {1054--1076},
	title = {Free products and the algebraic structure of diffeomorphism groups},
	url = {https://mathscinet.ams.org/mathscinet-getitem?mr=3989437},
	volume = {11},
	year = {2018}}

@book{KK21-book,
	author = {Kim, Sang-hyun and Koberda, Thomas},
	note = {Springer},
	pages = {xiv+323},
	publisher = {Springer},
	series = {Springer Monographs in Mathematics},
	title = {Structure and regularity of group actions on one--manifolds},
	year = {2021}}

@article{LMT2019,
	adsurl = {https://ui.adsabs.harvard.edu/abs/2018arXiv180308567L},
	archiveprefix = {arXiv},
	author = {{Lodha}, Yash and {Matte Bon}, Nicol{\'a}s and {Triestino}, Michele},
	eid = {arXiv:1803.08567},
	eprint = {1803.08567},
	journal = {arXiv e-prints},
	month = {Mar},
	pages = {arXiv:1803.08567},
	primaryclass = {math.DS},
	title = {{Property FW, differentiable structures, and smoothability of singular actions}},
	year = {2018}}

@article {Kim:aa,
    AUTHOR = {Kim, Sang-Hyun and Koberda, Thomas},
     TITLE = {Integrability of moduli and regularity of {D}enjoy
              counterexamples},
   JOURNAL = {Discrete Contin. Dyn. Syst.},
  FJOURNAL = {Discrete and Continuous Dynamical Systems},
    VOLUME = {40},
      YEAR = {2020},
    NUMBER = {10},
     PAGES = {6061--6088},
      ISSN = {1078-0947,1553-5231},
   MRCLASS = {37E10 (37C05 37C15 37C85 37E45)},
  MRNUMBER = {4128339},
MRREVIEWER = {Puneet\ Sharma},
       DOI = {10.3934/dcds.2020259},
       URL = {https://doi.org/10.3934/dcds.2020259},
}

@article{MannWolff20,
	author = {Mann, K. and Wolff, M.},
	doi = {10.2140/gt.2020.24.1211},
	fjournal = {Geometry \& Topology},
	issn = {1465-3060},
	journal = {Geom. Topol.},
	mrclass = {57M60 (20F34 57K20 57M07)},
	mrnumber = {4157553},
	number = {3},
	pages = {1211--1223},
	title = {Rigidity of mapping class group actions on {$S^1$}},
	url = {https://doi.org/10.2140/gt.2020.24.1211},
	volume = {24},
	year = {2020}}

@book{dlHarpe2000,
	address = {Chicago, IL},
	author = {de la Harpe, Pierre},
	isbn = {0-226-31719-6; 0-226-31721-8},
	mrclass = {20F65 (20F69 57M07)},
	mrnumber = {1786869 (2001i:20081)},
	mrreviewer = {Lee Mosher},
	pages = {vi+310},
	publisher = {University of Chicago Press},
	series = {Chicago Lectures in Mathematics},
	title = {Topics in geometric group theory},
	year = {2000}}

@book{KKM2019,
	author = {Kim, Sang-hyun and Koberda, Thomas and Mj, Mahan},
	doi = {10.1007/978-3-030-02855-8},
	isbn = {978-3-030-02854-1; 978-3-030-02855-8},
	mrclass = {37-02 (20F34 20F65 37C85 37D40 37F30 57M60)},
	mrnumber = {3887602},
	pages = {x+134},
	publisher = {Springer, Cham},
	series = {Lecture Notes in Mathematics},
	title = {Flexibility of group actions on the circle},
	url = {https://doi.org/10.1007/978-3-030-02855-8},
	volume = {2231},
	year = {2019}}

@article {Wilton2018Essential,
    AUTHOR = {Wilton, Henry},
     TITLE = {Essential surfaces in graph pairs},
   JOURNAL = {J. Amer. Math. Soc.},
  FJOURNAL = {Journal of the American Mathematical Society},
    VOLUME = {31},
      YEAR = {2018},
    NUMBER = {4},
     PAGES = {893--919},
      ISSN = {0894-0347,1088-6834},
   MRCLASS = {20F65 (20F67 57M07)},
  MRNUMBER = {3836561},
MRREVIEWER = {Vassilis\ Metaftsis},
       DOI = {10.1090/jams/901},
       URL = {https://doi.org/10.1090/jams/901},
}

@article{BKK2014,
	author = {Baik, H. and Kim, S. and Koberda, T.},
	doi = {10.1007/s11856-016-1307-8},
	fjournal = {Israel Journal of Mathematics},
	issn = {0021-2172},
	journal = {Israel J. Math.},
	mrclass = {20F36 (03D35 20E26 58D05)},
	mrnumber = {3509472},
	mrreviewer = {Valeriy G. Bardakov},
	number = {1},
	pages = {175--182},
	title = {Right-angled {A}rtin groups in the {$C^\infty$} diffeomorphism group of the real line},
	url = {http://dx.doi.org/10.1007/s11856-016-1307-8},
	volume = {213},
	year = {2016}}

@incollection {koberda-survey,
    AUTHOR = {Koberda, Thomas},
     TITLE = {Geometry and combinatorics via right-angled {A}rtin groups},
 BOOKTITLE = {In the tradition of {T}hurston {II}. {G}eometry and groups},
     PAGES = {475--518},
 PUBLISHER = {Springer, Cham},
      YEAR = {[2022] \copyright 2022},
      ISBN = {978-3-030-97559-3; 978-3-030-97560-9},
   MRCLASS = {20F36 (05C45 05C48 05C50 05C60 20F65)},
  MRNUMBER = {4472060},
MRREVIEWER = {Valeriy\ G.\ Bardakov},
       DOI = {10.1007/978-3-030-97560-9\_15},
       URL = {https://doi.org/10.1007/978-3-030-97560-9_15},
}

@article{Herman1979,
	author = {M. Herman},
	journal = {Inst. Hautes {\'E}tudes Sci. Publ. Math. No. 49},
	pages = {5--233},
	title = {Sur la conjugaison differentiable des diffeomorphismes du cercle a des rotations},
	year = {1979}}

@article{athanassopoulos,
	author = {Athanassopoulos, K.},
	doi = {10.1016/j.exmath.2013.12.005},
	fjournal = {Expositiones Mathematicae},
	issn = {0723-0869},
	journal = {Expo. Math.},
	mrclass = {37E10 (37E45)},
	mrnumber = {3310927},
	mrreviewer = {Liviana Palmisano},
	number = {1},
	pages = {48--66},
	title = {Denjoy {$C^1$} diffeomorphisms of the circle and {M}c{D}uff's question},
	url = {https://doi.org/10.1016/j.exmath.2013.12.005},
	volume = {33},
	year = {2015}}

@book{Serre1977,
	author = {Serre, J.-P.},
	mrclass = {20H10 (22E40)},
	mrnumber = {0476875},
	mrreviewer = {E. Vinberg},
	note = {Avec un sommaire anglais, R\'edig\'e avec la collaboration de Hyman Bass, Ast\'erisque, No. 46},
	pages = {189 pp. (1 plate)},
	publisher = {Soci\'et\'e Math\'ematique de France, Paris},
	title = {Arbres, amalgames, {${\rm SL}_{2}$}},
	year = {1977}}

@misc{Cornulier2013Commensurated,
  author        = {Cornulier, Yves},
  title         = {Group actions with commensurated subsets, wallings and cubings},
  year          = {2013},
  eprint        = {1302.5982},
  archiveprefix = {arXiv},
  primaryclass  = {math.GR},
  doi           = {10.48550/arXiv.1302.5982},
  note          = {Version 2, 2016}
}

@incollection {koberda-pingpong,
    AUTHOR = {Koberda, Thomas},
     TITLE = {Ping-pong lemmas with applications to geometry and topology},
 BOOKTITLE = {Geometry, topology and dynamics of character varieties},
    SERIES = {Lect. Notes Ser. Inst. Math. Sci. Natl. Univ. Singap.},
    VOLUME = {23},
     PAGES = {139--158},
 PUBLISHER = {World Sci. Publ., Hackensack, NJ},
      YEAR = {2012},
      ISBN = {978-981-4401-35-7; 981-4401-35-8},
   MRCLASS = {57M60 (20E06 20F36)},
  MRNUMBER = {2987617},
MRREVIEWER = {Qiang\ Zhang},
       DOI = {10.1142/9789814401364\_0004},
       URL = {https://doi.org/10.1142/9789814401364_0004},
}

@article{KK2013b,
	author = {Kim, S. and Koberda, T.},
	doi = {10.1142/S021819671450009X},
	fjournal = {International Journal of Algebra and Computation},
	issn = {0218-1967},
	journal = {Internat. J. Algebra Comput.},
	mrclass = {20F36 (05E45)},
	mrnumber = {3192368},
	mrreviewer = {Thomas Haettel},
	number = {2},
	pages = {121--169},
	title = {The geometry of the curve graph of a right-angled {A}rtin group},
	url = {http://dx.doi.org/10.1142/S021819671450009X},
	volume = {24},
	year = {2014}}

@book{Navas2011,
	author = {Navas, A.},
	doi = {10.7208/chicago/9780226569505.001.0001},
	edition = {{S}panish},
	isbn = {978-0-226-56951-2; 0-226-56951-9},
	mrclass = {37E10 (37C05 37C15 37E45 57M60)},
	mrnumber = {2809110},
	pages = {xviii+290},
	publisher = {University of Chicago Press, Chicago, IL},
	series = {Chicago Lectures in Mathematics},
	title = {Groups of circle diffeomorphisms},
	url = {http://dx.doi.org/10.7208/chicago/9780226569505.001.0001},
	year = {2011}}

@article{DGO2011,
	author = {F. Dahmani and V. Guirardel and D. Osin},
	eprint = {1111.7048},
	journal = {to appear in Memoirs of AMS},
	month = {11},
	title = {Hyperbolically embedded subgroups and rotating families in groups acting on hyperbolic spaces},
	url = {http://arxiv.org/abs/1111.7048},
	year = {2011}}

@article{Sageev1995,
	author = {Sageev, Michah},
	coden = {PLMTAL},
	doi = {10.1112/plms/s3-71.3.585},
	fjournal = {Proceedings of the London Mathematical Society. Third Series},
	issn = {0024-6115},
	journal = {Proc. London Math. Soc. (3)},
	mrclass = {20F32 (20E08)},
	mrnumber = {1347406 (97a:20062)},
	mrreviewer = {G. Peter Scott},
	number = {3},
	pages = {585--617},
	title = {Ends of group pairs and non-positively curved cube complexes},
	url = {http://dx.doi.org/10.1112/plms/s3-71.3.585},
	volume = {71},
	year = {1995}}

@article{Ghys2001,
	author = {Ghys, {\'E}.},
	coden = {ENMAAR},
	fjournal = {L'Enseignement Math\'ematique. Revue Internationale. IIe S\'erie},
	issn = {0013-8584},
	journal = {Enseign. Math. (2)},
	mrclass = {37C85 (37E10 57S05)},
	mrnumber = {1876932 (2003a:37032)},
	mrreviewer = {Grant Cairns},
	number = {3-4},
	pages = {329--407},
	title = {Groups acting on the circle},
	volume = {47},
	year = {2001}}

@unpublished{Wise2011,
	author = {Wise, D. T.},
	title = {The structure of groups with a quasiconvex hierarchy},
	url = {https://docs.google.com/open?id=0B45cNx80t5-2T0twUDFxVXRnQnc},
	year = {2011}}

@article{Agol2013,
	author = {Agol, Ian},
	fjournal = {Documenta Mathematica},
	issn = {1431-0635},
	journal = {Doc. Math.},
	mrclass = {20F67 (57Mxx)},
	mrnumber = {3104553},
	note = {With an appendix by Agol, Daniel Groves, and Jason Manning},
	pages = {1045--1087},
	title = {The virtual {H}aken conjecture},
	volume = {18},
	year = {2013}}

@article{HW2008,
	author = {Haglund, F. and Wise, D. T.},
	coden = {GFANFB},
	doi = {10.1007/s00039-007-0629-4},
	fjournal = {Geometric and Functional Analysis},
	issn = {1016-443X},
	journal = {Geom. Funct. Anal.},
	mrclass = {20F36 (20F55 20F67)},
	mrnumber = {2377497 (2009a:20061)},
	mrreviewer = {Patrick Bahls},
	number = {5},
	pages = {1551--1620},
	title = {Special cube complexes},
	url = {http://dx.doi.org/10.1007/s00039-007-0629-4},
	volume = {17},
	year = {2008}}

@book{Wise2012,
	author = {Wise, D. T.},
	isbn = {978-0-8218-8800-1},
	mrclass = {20F67 (20F06 57M07)},
	mrnumber = {2986461},
	pages = {xiv+141},
	publisher = {Published for the Conference Board of the Mathematical Sciences, Washington, DC},
	series = {CBMS Regional Conference Series in Mathematics},
	title = {From riches to raags: 3-manifolds, right-angled {A}rtin groups, and cubical geometry},
	volume = {117},
	year = {2012}}

@incollection {Matucci-2010,
    AUTHOR = {Matucci, Francesco},
     TITLE = {Mather invariants in groups of piecewise-linear
              homeomorphisms},
 BOOKTITLE = {Combinatorial and geometric group theory},
    SERIES = {Trends Math.},
     PAGES = {251--260},
 PUBLISHER = {Birkh\"auser/Springer Basel AG, Basel},
      YEAR = {2010},
      ISBN = {978-3-7643-9910-8},
   MRCLASS = {37E05 (20E45 37E10)},
  MRNUMBER = {2744023},
MRREVIEWER = {Nick\ Gill},
       DOI = {10.1007/978-3-7643-9911-5\_10},
       URL = {https://doi.org/10.1007/978-3-7643-9911-5_10},
}

@article{Bowditch1998,
	author = {Bowditch, Brian H.},
	coden = {ACMAA8},
	doi = {10.1007/BF02392898},
	fjournal = {Acta Mathematica},
	issn = {0001-5962},
	journal = {Acta Math.},
	mrclass = {20F32 (20E06 20E08 57M07)},
	mrreviewer = {Eric M. Freden},
	number = {2},
	pages = {145--186},
	title = {Cut points and canonical splittings of hyperbolic groups},
	url = {http://dx.doi.org/10.1007/BF02392898},
	volume = {180},
	year = {1998}}

@ARTICLE{koberda-girth-2026,
  title         = "Right-angled Artin groups of large girth and finite volume
                   hyperbolic $3$--manifold groups",
  author        = "Koberda, Thomas",
  month         =  jul,
  year          =  2026,
  copyright     = "http://arxiv.org/licenses/nonexclusive-distrib/1.0/",
  archivePrefix = "arXiv",
  primaryClass  = "math.GT",
  url = {http://arxiv.org/abs/2607.01652},
  eprint        = "2607.01652"
}

@book {weibel-book,
    AUTHOR = {Weibel, Charles A.},
     TITLE = {An introduction to homological algebra},
    SERIES = {Cambridge Studies in Advanced Mathematics},
    VOLUME = {38},
 PUBLISHER = {Cambridge University Press, Cambridge},
      YEAR = {1994},
     PAGES = {xiv+450},
      ISBN = {0-521-43500-5; 0-521-55987-1},
   MRCLASS = {18-01 (16-01 17-01 20-01 55Uxx)},
  MRNUMBER = {1269324},
MRREVIEWER = {Kenneth\ A.\ Brown},
       DOI = {10.1017/CBO9781139644136},
       URL = {https://doi.org/10.1017/CBO9781139644136},
}

@incollection {davis-pd,
    AUTHOR = {Davis, Michael W.},
     TITLE = {Poincar\'e{} duality groups},
 BOOKTITLE = {Surveys on surgery theory, {V}ol. 1},
    SERIES = {Ann. of Math. Stud.},
    VOLUME = {145},
     PAGES = {167--193},
 PUBLISHER = {Princeton Univ. Press, Princeton, NJ},
      YEAR = {2000},
      ISBN = {0-691-04937-8; 0-691-04938-6},
   MRCLASS = {57M07 (20J05 57P10 57R19)},
  MRNUMBER = {1747535},
MRREVIEWER = {Yuri\ Muranov},
}

@incollection {Minakawa-2001,
    AUTHOR = {Minakawa, Hiroyuki},
     TITLE = {Deformations of {PL}-representations of surface groups},
 BOOKTITLE = {Essays on geometry and related topics, {V}ol. 1, 2},
    SERIES = {Monogr. Enseign. Math.},
    VOLUME = {38},
     PAGES = {557--575},
 PUBLISHER = {Enseignement Math., Geneva},
      YEAR = {2001},
      ISBN = {2-940264-05-8},
   MRCLASS = {57R30 (57R32 57R50 57S30 58D05 58H15)},
  MRNUMBER = {1929339},
MRREVIEWER = {Takashi\ Tsuboi},
}

@article {osin-2016,
    AUTHOR = {Osin, D.},
     TITLE = {Acylindrically hyperbolic groups},
   JOURNAL = {Trans. Amer. Math. Soc.},
  FJOURNAL = {Transactions of the American Mathematical Society},
    VOLUME = {368},
      YEAR = {2016},
    NUMBER = {2},
     PAGES = {851--888},
      ISSN = {0002-9947,1088-6850},
   MRCLASS = {20F67 (20F65)},
  MRNUMBER = {3430352},
MRREVIEWER = {Alessandro\ Sisto},
       DOI = {10.1090/tran/6343},
       URL = {https://doi.org/10.1090/tran/6343},
}

@article {osin-minasyan,
    AUTHOR = {Minasyan, Ashot and Osin, Denis},
     TITLE = {Acylindrical hyperbolicity of groups acting on trees},
   JOURNAL = {Math. Ann.},
  FJOURNAL = {Mathematische Annalen},
    VOLUME = {362},
      YEAR = {2015},
    NUMBER = {3-4},
     PAGES = {1055--1105},
      ISSN = {0025-5831,1432-1807},
   MRCLASS = {20F67 (20E06 20E08 20F65 57M05)},
  MRNUMBER = {3368093},
MRREVIEWER = {Mohammad\ Shahryari},
       DOI = {10.1007/s00208-014-1138-z},
       URL = {https://doi.org/10.1007/s00208-014-1138-z},
}

@article {vavrichek,
    AUTHOR = {Vavrichek, Diane M.},
     TITLE = {Strong accessibility for hyperbolic groups},
   JOURNAL = {Algebr. Geom. Topol.},
  FJOURNAL = {Algebraic \& Geometric Topology},
    VOLUME = {8},
      YEAR = {2008},
    NUMBER = {3},
     PAGES = {1459--1479},
      ISSN = {1472-2747,1472-2739},
   MRCLASS = {20F67 (20E08 20F65 57M07 57N35)},
  MRNUMBER = {2443250},
       DOI = {10.2140/agt.2008.8.1459},
       URL = {https://doi.org/10.2140/agt.2008.8.1459},
}

@article {abbott-dahmani,
    AUTHOR = {Abbott, Carolyn R. and Dahmani, Fran\c cois},
     TITLE = {Property {$P_{naive}$} for acylindrically hyperbolic groups},
   JOURNAL = {Math. Z.},
  FJOURNAL = {Mathematische Zeitschrift},
    VOLUME = {291},
      YEAR = {2019},
    NUMBER = {1-2},
     PAGES = {555--568},
      ISSN = {0025-5874,1432-1823},
   MRCLASS = {20F65 (20E07 46L35)},
  MRNUMBER = {3936081},
MRREVIEWER = {Xianjin\ Wang},
       DOI = {10.1007/s00209-018-2094-1},
       URL = {https://doi.org/10.1007/s00209-018-2094-1},
}

@article {louder-touikan,
    AUTHOR = {Louder, Larsen and Touikan, Nicholas},
     TITLE = {Strong accessibility for finitely presented groups},
   JOURNAL = {Geom. Topol.},
  FJOURNAL = {Geometry \& Topology},
    VOLUME = {21},
      YEAR = {2017},
    NUMBER = {3},
     PAGES = {1805--1835},
      ISSN = {1465-3060,1364-0380},
   MRCLASS = {20E08 (20F65 20F67 57M60)},
  MRNUMBER = {3650082},
MRREVIEWER = {Vassilis\ Metaftsis},
       DOI = {10.2140/gt.2017.21.1805},
       URL = {https://doi.org/10.2140/gt.2017.21.1805},
}

@article{Charney2007,
	author = {Charney, Ruth},
	coden = {GEMDAT},
	doi = {10.1007/s10711-007-9148-6},
	fjournal = {Geometriae Dedicata},
	issn = {0046-5755},
	journal = {Geom. Dedicata},
	mrclass = {20F36 (20F65)},
	mrreviewer = {Noelle C. Antony},
	pages = {141--158},
	title = {An introduction to right-angled {A}rtin groups},
	url = {http://dx.doi.org/10.1007/s10711-007-9148-6},
	volume = {125},
	year = {2007}}

@article{Agol2008,
	author = {Agol, Ian},
	doi = {10.1112/jtopol/jtn003},
	fjournal = {Journal of Topology},
	issn = {1753-8416},
	journal = {J. Topol.},
	mrclass = {57M50 (57N10)},
	mrnumber = {2399130 (2009b:57033)},
	mrreviewer = {Darren D. Long},
	number = {2},
	pages = {269--284},
	title = {Criteria for virtual fibering},
	url = {http://dx.doi.org/10.1112/jtopol/jtn003},
	volume = {1},
	year = {2008}}

@article{Bowditch2008,
	author = {Bowditch, Brian H.},
	coden = {INVMBH},
	doi = {10.1007/s00222-007-0081-y},
	fjournal = {Inventiones Mathematicae},
	issn = {0020-9910},
	journal = {Invent. Math.},
	mrclass = {57M50 (20F65)},
	mrnumber = {2367021 (2008m:57040)},
	mrreviewer = {Jason A. Behrstock},
	number = {2},
	pages = {281--300},
	title = {Tight geodesics in the curve complex},
	url = {http://dx.doi.org/10.1007/s00222-007-0081-y},
	volume = {171},
	year = {2008}}

@article {HM1995,
    AUTHOR = {Hermiller, Susan and Meier, John},
     TITLE = {Algorithms and geometry for graph products of groups},
   JOURNAL = {J. Algebra},
  FJOURNAL = {Journal of Algebra},
    VOLUME = {171},
      YEAR = {1995},
    NUMBER = {1},
     PAGES = {230--257},
      ISSN = {0021-8693,1090-266X},
   MRCLASS = {20F32 (20F10)},
  MRNUMBER = {1314099},
MRREVIEWER = {Ian M. Chiswell},
       DOI = {10.1006/jabr.1995.1010},
       URL = {https://doi.org/10.1006/jabr.1995.1010},
}
\end{document}